\newtheorem{thm}{Theorem}[section]
\newtheorem{lem}[thm]{Lemma}
\newtheorem{cor}[thm]{Corollary}
\newtheorem{prop}[thm]{Proposition}
\theoremstyle{definition}
\newtheorem{example}[thm]{Example}
\newtheorem{defn}[thm]{Definition}
\newtheorem{conj}[thm]{Conjecture}
\newtheorem{rem}[thm]{Remark}
\numberwithin{equation}{thm}
\newcommand{\rank}{\underline{\operatorname{rank}}}
\newcommand{\dl}{\operatorname{dl}}
\begin{document}
\title[ AR quivers of string algebras of type $\widetilde{C}$ and a conjecture of GLS]
{The Auslander-Reiten quivers of string algebras of affine type $\widetilde{C}$ and a conjecture by Geiss-Leclerc-Schr\"{o}er}

\author{Hua-lin Huang, Zengqiang Lin$^*$ and Xiuping Su}
\address{Hua-lin Huang, Zengqiang Lin: School of Mathematical sciences, Huaqiao University,
Quanzhou\quad 362021,  China.}
\address{Xiuping Su: Department of Mathematical Sciences, University of Bath, Bath BA2 7JY, UK}

\thanks{The first author was supported by  the National Natural Science Foundation of China (Grants No. 11911530172 and 11971181). The second author was supported by the Natural Science Foundation of Fujian Province (Grant No. 2020J01075)}
\thanks{$^*$ The corresponding author.}
\thanks{Email: hualin.huang@foxmail.com; zqlin@hqu.edu.cn; xs214@bath.ac.uk.}
\subjclass[2010]{16G10, 16G20, 16G70}

\keywords{string algebra; minimal string module; $\tau$-locally free module; root.}

\begin{abstract}
In this paper, we study representations of certain string algebras, which are referred to as of affine type $\widetilde{C}$. We introduce minimal string modules and apply them to explicitly describe components of the
Auslander-Reiten quivers of the string algebras and $\tau$-locally free modules defined by Geiss-Lerclerc-Schr\"{o}er. As an application, we prove
 Geiss-Leclerc-Schr\"{o}er's conjecture on the correspondence between positive roots of type $\widetilde{C}$ and
$\tau$-locally free modules of the corresponding string algebras.
\end{abstract}

\maketitle

\section{Introduction}

Given  a symmetrizable Cartan matrix $C$ with a symmetrizer $D$,
Geiss-Leclerc-Schr\"{o}er  \cite{[GLS1]} construct a  quiver $Q=Q(C, \Omega)$ and define
a quotient path algebra $H=H(C,D,\Omega)=KQ/I$, where $K$ is a field and $I$ is an ideal generated by some nilpotency relations and some commutative relations.
In particular, there is a loop at each vertex in $Q$ and powers of the nilpotency relations in $I$ encode the symmetrizer $D$.
They then develop a sequence of work based on the representation theory of $H$ \cite{[GLS1], [GLS2], [GLS3], [GLS4], [GLS5]}, providing a uniform approach to studying connections between
 representation theory of simply laced and non-simply laced (or valued) quivers, and Lie theory and cluster theory. For instance, it includes
a generalisation of two fundamental results in quiver representation theory,  Gabriel's Theorem and Dlab-Ringel's Theorem (to be made more precise later), a  construction of enveloping algebras and a generalisation of Caldero-Chapoton's formula in cluster theory.

We are interested in the aspect of the correspondence between $\tau$-locally free  $H$-modules and positive roots of type $C$ \cite{[GLS1], [GLS2]}.
Let  $e_i$ be the idempotent in $H$ corresponding to the vertex $i$ in $Q$ and $H_i=e_iHe_i$. A left $H$-module $M$ is said to be  {\em locally free} if $M_i=e_iM$ is a free $H_i$-module for all $i$, and for such a module $M$, denote by $\underline{\text{rank}}(M)=(r_1,\cdots,r_n)$ the rank vector of $M$. That is,  each $r_i$ is  the rank of the free $H_i$-module $M_i$.
 An indecomposable $H$-module $M$ is  {\em $\tau$-locally free} if the AR-translations $\tau^k(M)$ for all $k\in \mathbb{Z}$ are locally free. Note that not all indecomposable locally free modules are $\tau$-locally free.
Geiss-Leclerc-Schr\"{o}er \cite{[GLS1]} prove that  there are only finitely many isomorphism classes of $\tau$-locally free $H$-modules if and only if the Cartan matrix $C$ is of Dynkin type. Moreover, in this case, the assignment $M\mapsto \underline{\text{rank}}(M)$ offers a bijection between the isomorphism classes of  $\tau$-locally free $H$-modules and
the positive roots of type $C$, i.e. the positive roots of a complex Lie algebra defined by $C$.  These results generalize Gabriel's  Theorem for Dynkin quivers \cite{[Gab]} and Dlab-Ringel's  Theorem for Dynkin (valued) quivers \cite{[DR]} (also known as modulated graphs, see for instance \cite{[GLS1]}),
in the sense that both theorems provide a one to one correspondence between the isomorphism classes of representations of a Dynkin (valued) quiver of type $C$ and the positive roots of type $C$ via the map sending an indecomposable representation to its dimension vector.

For non-Dynkin symmetrizable Cartan matrices,  Geiss-Leclerc-Schr\"{o}er propose the following conjecture
\cite[Conjecture 5.3]{[GLS2]}.

\vspace{2mm}\noindent
{\bf{Conjecture 1}} [Geiss-Leclerc-Schr\"{o}er]
{\em Let $H=H(C,D,\Omega)$. Then
there is a bijection between  the positive roots of type $C$ and  the rank vectors of $\tau$-locally free $H$-modules.}

\vspace{2mm}

Evidence supporting the conjecture includes the following. First, when $C$ is symmetric and $D$ is the identity matrix, the conjecture is true by Kac's Theorem \cite{[K1],[K2]}. Second,
Geiss-Leclerc-Schr\"{o}er  \cite{[GLS6]} prove that there is a bijection between  isomorphism classes of $\tau$-locally free  rigid $H$-modules and  real Schur roots of $Q$. Note also Chen-Wang \cite{[CW]} work on categorification of foldings of root lattices and in the case of Dynkin type they recover
Geiss-Leclerc-Schr\"{o}er's result on the correspondence between $\tau$-locally free $H$-modules and positive roots of type $C$.

In general, Conjecture 1 is still open.  In this paper, we consider an affine case of type $\widetilde{C}_{n-1}$, that is,  the Cartan matrix $C$ is the following $n\times n$ matrix
 $$C=\left(
                \begin{array}{ccccccc}
                  2 & -1 &  &  & & \\
                  -2 & 2 & -1 &  &  &\\
                  & -1 & 2 & -1 & & \\
                 & & \ddots & \ddots & \ddots & \\
                &  & & -1 & 2 & -1 & \\
                 & & &  & -1 & 2 & -2 \\
                 & &  &  &  & -1 & 2 \\
                \end{array}
              \right)$$
and the symmetrizer  $D=\textup{diag}(2,1,1,\cdots,1,1,2)$.  {
Then the algebra $H=H(C,D,\Omega)$ $=KQ/I$, where
$Q$ is a quiver of type $A_n$ when the two loops at 1 and $n$ are ignored},
$$ \xymatrix{
\varepsilon_1 \circlearrowleft  1 \ar@{-}[r] & 2\ar@{-}[r]&  \cdots \ar@{-}[r]  &n\circlearrowleft \varepsilon_n\\
}$$
and $I$ is  the  ideal generated by
$\varepsilon_1^2$ and $\varepsilon_n^2$. In particular, $H$ is a string algebra and we say that $H$
is a  {\em string algebra of type} $\widetilde{C}_{n-1}$. Note that, if $C$ is of other affine type, then  $H=H(C,D,\Omega)$ is not a string algebra.
We will study the representation category of $H$, in particular the Auslander-Reiten theory of $H$, using techniques from string algebras,  and minimal string modules that are  to be introduced later in this paper.
The explicit construction of Auslander-Reiten  sequences (also written as AR-sequences) for string algebras in  \cite{[DR]}  by Butler-Ringel  is particularly helpful in our understanding of the Auslander-Reiten quiver (also written as AR-quiver) of $H$.

We define the {\em index} of an indecomposable $H$-module $M$ to be $(a,b)$, where  $a$ is the number of   irreducible maps to $M$  and $b$ is the number of irreducible maps from $M$ in the AR-quiver of $H$ and we say  that a string module $M$ is {\em minimal} if if each irreducible map from $M(w)$ is injective and each irreducible map to $M(w)$ is surjective. Using Butler-Ringel's construction of AR-sequences,  we classify all the minimal string modules. This classification
then leads to explicit description of connected components of the AR-quiver of $H$. Consequently, we know precisely where $\tau$-locally free modules are in the AR-quiver and so prove Conjecture 1 for the case where $C$ is of type $\widetilde{C}_{n-1}$ and $D$ is minimal. We have the following main results.

\vspace{1mm}\noindent
{\bf Theorem A} (Theorem \ref{thm3.1}) {\em Let $H=H(C,D,\Omega)$ be a string algebra of type $\widetilde{C}_{n-1}$.
 The AR-quiver $\Gamma_H$ of $H$ consists of the following, which are pairwise disjoint.
\begin{itemize}
\item[(1)] One component $\mathcal{T}_{PI}$ containing all the indecomposable preprojective modules and all the indecomposable preinjective modules (up to isomorphism).

\item[(2)] One tube of rank $n-1$.

\item[(3)] Homogeneous tubes $\mathcal{H}_{w,S}$, where $w$ runs through all the representatives of bands in $H$ and
$S$ runs through the isomorphism classes of simple modules of the Laurent polynomial ring $K[T,T^{-1}]$.

\item[(4)] Components $\mathcal{T}_\lambda$ of type $\mathbb{Z}A_\infty^\infty$,  where $\lambda$ runs through the isomorphism classes of minimal string modules of type (2,2).
\end{itemize}}

\vspace{1mm}\noindent
{\bf Theorem B} (Theorem \ref{thm3.2})
{\em Let $H=H(C,D,\Omega)$ be a string algebra of type $\widetilde{C}_{n-1}$ and
let $M$ be an indecomposable $H$-module. Then $M$ is $\tau$-locally free if and only if one of the following is satisfied:
\begin{itemize}
\item[(1)] $M$ is preprojective.

\item[(2)] $M$ is preinjective.

\item[(3)] $M$ is a regular module occurring in any tube.
\end{itemize}}

\vspace{1mm}\noindent
{\bf Theorem C} (Theorem \ref{thm4.1}) {
{\em Let $H=H(C,D,\Omega)$ be a string algebra of type $\widetilde{C}_{n-1}$.
Then $\alpha$ is a positive root of type $C$ if and only if there is a
$\tau$-locally free module $M$ such that $\rank M=\alpha$. Moreover,
\begin{itemize}
\item[(1)] if $\alpha$ is a positive real root, then there is a unique $\tau$-locally free $H$-module $M$ (up to isomorphism) such that $\underline{\textup{rank}} M =\alpha$.

\item[(2)] if $\alpha$ is a positive imaginary root,
then there are families of  $\tau$-locally free $H$-modules $M$ such that  $\underline{\textup{rank}}(M)=\alpha$.
\item[(3)] the modules at the bottom of the tube of rank $n-1$ are rigid.
\end{itemize}}}

\vspace{1mm}\noindent
{\bf Corollary D} (Corollary \ref{cor4.4}) {\em Let $C$ be a Cartan matrix of type $\widetilde{C}_{n-1}$ and $D=\textup{diag}(2,1,\cdots, \,1,2)$. Then Conjecture 1 is true.}

\vspace{1mm}

The remaining part of this paper is organized as follows. In Section 2, we recall some basic definitions on string algebras and Butler-Ringel's construction of AR-sequences. In Section 3, we develop the theory of minimal string modules to prove Theorem A and Theorem B. In Section 4, we first recall basic definitions and facts on root systems and Weyl groups,
and then prove Theorem C and Corollary D.

\section{Basic notions and facts on string algebras}

In this section, we recall the definition of string algebras and basic properties of their module categories \cite{[BR]}.
Let $K$ be a field and $A$ be a finite dimensional $K$-algebra. Throughout this paper, all modules are left $A$-modules. We denote by $S_1,S_2,\cdots,S_n$ a complete list of simple $A$-modules,  and by $P_1,P_2,\cdots,P_n$ (resp. $I_1,I_2,\cdots,I_n$) a complete list of  indecomposable projective (resp. injective) $A$-modules (up to isomorphism).

\subsection{String algebras}

Let $Q=(Q_0,Q_1)$ be a quiver, where $Q_0$ denotes the set of vertices and $Q_1$ denotes the set of arrows in $Q$. Given an arrow $\alpha\in Q_1$, its starting and ending vertices are denoted by $s(\alpha)$ and $t(\alpha)$, respectively.

\begin{defn}
A finite dimensional $K$-algebra $A=KQ/I$ is called a {\em string algebra} if  the following conditions are satisfied:

(1) for any vertex $i\in Q_0$, there are at most two incoming and at most two outgoing arrows.

(2) for any arrow $\alpha\in Q_1$, there is at most one arrow $\beta$ and at most one arrow $\gamma$ such that $\beta\alpha\notin I$ and $\alpha\gamma\notin I$.

(3) the ideal $I$ is generated by a set of zero relations.

In particular, a string algebra $A=KQ/I$ is called {\em gentle} if $I$ is generated by paths of length 2.
\end{defn}

\begin{example}\label{ex1}
Let $A=KQ/I$, where
$$Q:\varepsilon_1 \circlearrowleft  1\xrightarrow{\alpha_1} 2 \xrightarrow{\alpha_2} 3\circlearrowleft \varepsilon_3$$
and $I=<\varepsilon_1^2,\varepsilon_3^2>$. Then $A$ is a string algebra.
\end{example}

\subsection{Strings and bands}

Let $A=KQ/I$ be a string algebra. Given an arrow $\alpha\in Q_1$, we denote by $\alpha^{-1}$ the {\em formal inverse} of $\alpha$, with $s(\alpha^{-1})=t(\alpha)$ and $t(\alpha^{-1})=s(\alpha)$, and write $(\alpha^{-1})^{-1}=\alpha$. A {\em word} $w=c_1c_2\cdots c_m$ of length $m\geq 1$ is a sequence of arrows and their formal inverses such that $s(c_i)=t(c_{i+1})$ for $1\leq i<m$. We define $w^{-1}=(c_1c_2\cdots c_m)^{-1}=c_m^{-1}\cdots c_2^{-1}c_1^{-1}$, $s(w)=s(c_m)$ and $t(w)=t(c_1)$. A  word $w=c_1c_2\cdots c_m$ of length $m\geq 1$ is called a {\em string} if   $c_{i+1}\neq c_i^{-1}$, and no subword nor its inverse belongs to $I$. In addition, we associate two {\em trivial} strings $1_{(u,1)}$ and $1_{(u,-1)}$ of length zero for any vertex $u\in Q_0$, where $s(1_{(u,i)})=t(1_{(u,i)})=u$ and $(1_{(u,i)})^{-1}=1_{(u,-i)}$ for $i=1,-1$. A string $w=c_1c_2\cdots c_m$ is said to be {\em direct} if all the $c_i$ are arrows, and {\em inverse} if all the $c_i$ are inverses of arrows. By definition, a vertex is  both direct and inverse.
We denote by $\textup{St}(A)$ the set of all strings in $A$.

A nontrivial string $w$  is called a {\em band} if $s(w)=t(w)$ and each power $w^r$ is a string, but $w$ itself is not a power of a string of smaller length. We denote by $\textup{Ba}(A)$ the set of all bands in $A$.

On $\textup{St}(A)$, let $\rho$ be  the equivalence relation that  identifies every string $w$ with its inverse $w^{-1}$. On $\textup{Ba}(A)$, let $\rho'$ be the equivalence relation that identifies every string $w=c_1c_2\cdots c_m$ with any cyclically permuted strings $w_{(i)}=c_ic_{i+1}\cdots c_mc_1\cdots c_{i-1}$ and their inverses $w_{(i)}^{-1}$, $1\leq i\leq m$. We choose a complete set $\overline{\textup{St}}(A)$ of representatives of $\textup{St}(A)$ relative to $\rho$, and a complete set $\overline{\textup{Ba}}(A)$ of representatives of $\textup{Ba}(A)$ relative to $\rho'$.

We write $u\sim w$ if two strings (resp. bands) $u$ and $w$ are equivalent, and  $u\not\sim w$ otherwise.
Represent a string $w=\alpha_1^{\epsilon_1}\alpha_2^{\epsilon_2}\cdots\alpha_m^{\epsilon_m}$, where $\alpha_i\in Q_1$ and $\epsilon_i\in\{1,-1\}$ for all $i$, as a walk
$$\xymatrix{x_{1}\ar@{-}[r]^{\alpha_1}& x_2\ar@{-}[r]^{\alpha_{2}}&\cdots\ar@{-}[r]^{\alpha_m} & x_{m+1},
}$$
 where $x_1,x_2,\cdots,x_{m+1}$ are the vertices of $Q$ visited by $w$, $\alpha_i$ is an arrow from $x_{i+1}$ to $x_{i}$ if $\epsilon_i=1$, or an arrow from $x_i$ to $x_{i+1}$ if $\epsilon_i=-1$.
This equivalence relation induces an equivalence relation on the walks. That is,  the walk
  $$w:\xymatrix{x_{1}\ar@{-}[r]^{\alpha_1}& x_2\ar@{-}[r]^{\alpha_{2}}&\cdots\ar@{-}[r]^{\alpha_m} & x_{m+1}
}$$ is equivalent to the walk
$$w^{-1}:\xymatrix{x_{m+1}\ar@{-}[r]^{\alpha_m}& x_m\ar@{-}[r]^{\alpha_{m-1}}&\cdots\ar@{-}[r]^{\alpha_1} & x_{1}.
}$$
Similarly, walks of bands are equivalent if the corresponding bands are equivalent with respect to $\rho'$.

\begin{example}
Let $A$ be a string algebra as in Example \ref{ex1}. Then $w_1=\alpha_1^{-1}\alpha_2^{-1}\varepsilon_3\alpha_2\alpha_1$ is a string but not a band, and $w_2=\varepsilon_1\alpha_1^{-1}\alpha_2^{-1}\varepsilon_3\alpha_2\alpha_1=\varepsilon_1w_1$ is a band.
\end{example}

\subsection{String modules and band modules}
Let $w=\alpha_1^{\epsilon_1}\alpha_2^{\epsilon_2}\cdots\alpha_m^{\epsilon_m}$ be a string with
the corresponding walk $$\xymatrix{x_{1}\ar@{-}[r]^{\alpha_1}& x_2\ar@{-}[r]^{\alpha_{2}}&\cdots\ar@{-}[r]^{\alpha_{m-1}}& x_{n}\ar@{-}[r]^{\alpha_{m}} & x_{m+1}
}.$$
The {\em string module} defined by $w$ is the  representation $M(w)=((V_i)_{i\in Q_0},(\varphi_\alpha)_{\alpha\in Q_1})$, where the vector spaces
$$V_i=\left\{
        \begin{array}{ll}
          \oplus_{x_j=i}Kx_j & \textup{if}\ i=x_j\ \textup{for some}\ j\in\{1,2,\cdots,m+1\}, \\
          0 & \textup{otherwise}, \\
        \end{array}
      \right.
$$
and the linear maps $\varphi_\alpha$ are given by $$\varphi_\alpha(x_{s(\alpha_k)})=\left\{
        \begin{array}{ll}
          x_{t(\alpha_k)} & \textup{if}\ \alpha=\alpha_{k}\ \textup{for\ some}\ 1\leq k\leq m, \\
          0 & \textup{otherwise}. \\
        \end{array}
      \right.$$
The module $M(w)$ can be unfolded as a representation $U(w)$ as follows,
$$\xymatrix{U_{x_{1}}\ar@{-}[r]^{U_{\alpha_1}}& U_{x_2}\ar@{-}[r]^{U_{\alpha_{2}}}&\cdots\ar@{-}[r]^{U_{\alpha_{m-1}}}& U_{x_{m}}\ar@{-}[r]^{U_{\alpha_{m}}} & U_{x_{m+1}}
}$$
where $U_{x_i}=K$  and $U_{\alpha_j}=\textup{id}_K$ for all $i$ and $j$.

By construction, $$\textup{dim}_KV_i=|\{j\in\{1,2,\cdots,m+1\}|x_j=i\}|$$ for any $i\in Q_0$, and $M(w)\cong M(w^{-1})$ as $A$-modules for any string $w$, and $M(1_{(u,t)})$ is the simple representation corresponding to the vertex $u$.

Next we explain the construction of a band module. Let $w=\alpha_1^{\epsilon_1}\alpha_2^{\epsilon_2}\cdots\alpha_m^{\epsilon_m}$
be a band with the corresponding walk
$$\xymatrix{
x_{1}\ar@{-}[r]^{\alpha_1}\ar@{-}[rrdd]_{\alpha_m} & x_2\ar@{-}[r]^{\alpha_{2}}&\cdots\ar@{-}[r]^{\alpha_{m-3}} &
x_{m-2}\ar@{-}[r]^{\alpha_{m-2}}& x_{m-1}\ar@{-}[lldd]^{\alpha_{m-1}}\\
&&&&&\\
&&x_m.&&&}$$
{Let  $X$ be a  module of the Laurent polynomial ring  $K[T,T^{-1}]$. Then $X$ is determined
by $s=\textup{dim} X$ and an  automorphism $\varphi$ of $X=K^s$.  So we also write $X=(K^s, \varphi)$.
Let $U(w, s ,  \varphi)$ be the representation associated to the walk $w$ and the module $X$ as follows},
$$\xymatrix{
U_{x_{1}}\ar@{-}[r]^{U_{\alpha_1}}\ar@{-}[rrdd]_{U_{\alpha_m}}& U_{x_2}\ar@{-}[r]^{U_{\alpha_{2}}}&\cdots\ar@{-}[r]^{U_{\alpha_{m-3}}} &
U_{x_{m-2}}\ar@{-}[r]^{U_{\alpha_{m-2}}}& U_{x_{m-1}}\ar@{-}[lldd]^{U_{\alpha_{m-1}}}\\
&&&&&\\
&&U_{x_m} &&&}$$
where $U_{x_{i}}=K^s$ for all $i=1,2,\cdots, m$ and
$$U_{\alpha_i}=\left\{
        \begin{array}{ll}
         \varphi & \textup{if}\ i=1\ \textup{and}\ \epsilon_1=1, \\
          \varphi^{-1} & \textup{if}\ i=1\ \textup{and}\ \epsilon_1=-1, \\
          \textup{id}_{K^s} & \textup{if}\ 2\leq i\leq m. \\
        \end{array}
      \right.$$
Now the  {\em band module} $M(w,s,\varphi)=((V_i)_{i\in Q_0},(\varphi_\alpha)_{\alpha\in Q_1})$ is defined by
$$V_i=\left\{
        \begin{array}{ll}
          \oplus_{x_j=i}U_{x_j} & \textup{if}\ i=x_j\ \textup{for some}\ j\in\{1,2,\cdots,m\}, \\
          0 & \textup{otherwise}, \\
        \end{array}
      \right.
$$ and $$\varphi_\alpha=\left\{
        \begin{array}{ll}
          \oplus_{\alpha_i=\alpha}U_{\alpha_i} & \textup{if}\ \alpha=\alpha_i\ \textup{for some}\ i\in\{1,2,\cdots,m\}, \\
          0 & \textup{otherwise}. \\
        \end{array}
      \right.
$$
From the definition, one can check that $M(w,s,\varphi)\cong M(w^{-1},s,\varphi^{-1})$ and $M(w,s,\varphi)\cong M(w',s,\varphi)$, where $w'$ is equivalent to $w$ with respect to $\rho'$.

\begin{example}
Let $A$ be a string algebra as in Example \ref{ex1}.

(1) For the string $w_1=\alpha_1^{-1}\alpha_2^{-1}\varepsilon_3\alpha_2\alpha_1$, the string module $M(w_1)$ is as follows.
$$\xymatrix{
0 \circlearrowleft  K^2\ar[r]^{\ \ \ \ \textup{id}}& K^2 \ar[r]^{\textup{id} \ \ \ \ } & K^2\circlearrowleft ^{\left(
                                                                              \begin{smallmatrix}
                                                                                0 & 0 \\
                                                                                1 & 0 \\
                                                                              \end{smallmatrix}
                                                                            \right)}
\\
}$$

(2) For the band $w_2=\varepsilon_1\alpha_1^{-1}\alpha_2^{-1}\varepsilon_3\alpha_2\alpha_1$, the band module $M(w_2,1,\lambda)$ is as follows, where $\lambda\not= 0.$
$$\xymatrix{
^{\left(
                                                                              \begin{smallmatrix}
                                                                                0 & \lambda \\
                                                                                0 & 0 \\
                                                                              \end{smallmatrix}
                                                                            \right)}
\circlearrowleft  K^2\ar[r]^{\ \ \ \ \ \textup{id} }& K^2 \ar[r]^{ \textup{id}\ \ \ \ \ }
& K^2\circlearrowleft ^{\left(
                                                                              \begin{smallmatrix}
                                                                                0 & 0 \\
                                                                                1 & 0 \\
                                                                              \end{smallmatrix}
                                                                            \right)}
\\
}$$
\end{example}

{ Denote by $\mathcal{M}$ a complete set of representatives of indecomposable $K[T, T^{-1}]$-modules.
\begin{thm}\cite[Theorem 3.1]{[BR]} \label{BRThm} Let $A$ be a string algebra.
Then the string modules $M(w)$ with $w\in\overline{\textup{St}}(A)$ and the band modules $M(w,s,\varphi)$ with $w\in\overline{\textup{Ba}}(A)$ and $(K^s,\varphi)\in \mathcal{M}$
are up to isomorphism all the indecomposable $A$-modules.
\end{thm}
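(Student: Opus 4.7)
The plan is to prove three separate statements: (a) each $M(w)$ for $w\in\overline{\textup{St}}(A)$ and each $M(w,s,\varphi)$ for $w\in\overline{\textup{Ba}}(A)$ and $(K^s,\varphi)\in\mathcal{M}$ is indecomposable; (b) non-equivalent strings (resp.\ non-equivalent bands together with non-isomorphic $K[T,T^{-1}]$-modules) yield non-isomorphic $A$-modules; (c) every indecomposable finite-dimensional $A$-module is of one of these two forms. Steps (a) and (b) are a computation inside the representation category, while step (c) is the substantial obstacle.

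For (a) and (b), I would describe every homomorphism $M(u)\to M(w)$ via \emph{graph maps}, the standard tool for string algebras: a basis of $\textup{Hom}(M(u),M(w))$ is indexed by factorizations $u=u_L c u_R$ and $w=w_L c w_R$ in which the common substring $c$ appears as a quotient of $M(u)$ and a submodule of $M(w)$, with the "cleanness" at each end forced by the at-most-two-arrows axiom and by the zero relations generating $I$. Specialising to $u=w$ yields $\textup{End}(M(w))=K\cdot\textup{id}$ for $w\in\overline{\textup{St}}(A)$, so $M(w)$ is a brick and hence indecomposable. The same analysis on a band, together with the extra datum of the action of $T$ on the common fibre $K^s$, gives $\textup{End}(M(w,s,\varphi))\cong \textup{End}_{K[T,T^{-1}]}(K^s,\varphi)$, which is local precisely when $(K^s,\varphi)$ is indecomposable. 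The graph-map description simultaneously proves (b): the walk underlying $w$ is recovered from the composition factors together with the arrow actions on $M(w)$, and for bands the pair $(s,\varphi)$ is recovered (up to the stated symmetries) from the endomorphism induced by cycling around the band.

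The hard part is (c). The standard route is Butler--Ringel's method of \emph{functorial filtrations}. Using the special biserial structure, one attaches to every $A$-module $M$ two exhaustive filtrations $\{M_{\ge v}\}$ indexed by "direct" words $v$ and two dual filtrations indexed by "inverse" words; at each vertex the indexing prescribes which of the (at most two) incoming or outgoing arrows one follows, and the filtration data are read off functorially from how the generators act. The crucial property to verify is that these filtrations are compatible with the ideal $I$ and that, after mutual refinement, their graded pieces split $M$ into one-dimensional layers matching strings plus, for every cyclic word $w\in\overline{\textup{Ba}}(A)$, a single finite-dimensional $K[T,T^{-1}]$-module encoding the monodromy around $w$. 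Invoking Krull--Schmidt on this functorial decomposition then forces an indecomposable $M$ to be either a single string module $M(w)$ or a single band module $M(w,s,\varphi)$ with $(K^s,\varphi)\in\mathcal{M}$. The main obstacle is the bookkeeping: one must check that the refined graded pieces exhaust $M$ with no residue, which is exactly where both the bound of two arrows at each vertex and the fact that $I$ is generated by zero relations are used in an essential way.
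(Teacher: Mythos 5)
The paper gives no proof of this statement: it is quoted verbatim as Butler--Ringel's classification theorem (\cite[Theorem 3.1]{[BR]}), so the only thing to compare your sketch against is the original literature. Your overall architecture is the right one and matches Butler--Ringel's actual argument in outline: functorial filtrations (going back to Gelfand--Ponomarev and Ringel's work on the dihedral $2$-groups) for the completeness statement, and an analysis of homomorphisms between string and band modules for indecomposability and pairwise non-isomorphism. The description of $\operatorname{Hom}(M(u),M(w))$ by graph maps is a later refinement due to Crawley-Boevey and Krause rather than something in \cite{[BR]} itself, but it is a legitimate route to (a) and (b).

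There is, however, one step that fails as stated: the claim that $\operatorname{End}(M(w))=K\cdot\mathrm{id}$, so that $M(w)$ is a brick. String modules over string algebras are in general \emph{not} bricks. A counterexample lives inside the very algebra of this paper: for the string $w=\varepsilon_1$ the module $M(\varepsilon_1)$ is the representation with $V_1=K^2$ and $\varepsilon_1$ acting by a nilpotent Jordan block, and its endomorphism ring is $K[x]/(x^2)$ --- two-dimensional, local, but not $K$. More generally, whenever a proper substring of $w$ occurs in $w$ both as a submodule-type factor and as a quotient-type factor, one gets a nonzero non-invertible graph map $M(w)\to M(w)$. What the graph-map calculus actually gives is that every non-identity graph map from $w$ to itself factors through a string module of strictly smaller dimension and is nilpotent, so $\operatorname{End}(M(w))$ is local; indecomposability follows from locality, not from being a brick. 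The same correction applies to bands: $\operatorname{End}(M(w,s,\varphi))$ is an extension of $\operatorname{End}_{K[T,T^{-1}]}(K^s,\varphi)$ by a nilpotent ideal spanned by graph maps, which is again local exactly when $(K^s,\varphi)$ is indecomposable. With that repair, and granting the (substantial, and here only outlined) bookkeeping in the functorial-filtration step, your plan is the standard proof.
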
}

\subsection{Auslaner-Reiten sequences for string algebras}

For each arrow $\alpha\in Q_1$, let
\[\alpha_{-} =\beta_1^{-1}\beta_2^{-1}\cdots \beta_r^{-1}\]
be the inverse string of maximal length such that $\alpha\cdot\alpha_{-}$ is a string, and  let
\[{}_{-}\alpha=\gamma_s^{-1}\cdots\gamma_2^{-1}\gamma_1^{-1}\]
be the inverse string of maximal length such that
$_{-}\alpha\cdot\alpha$ is a string.
Similarly, let
 \[_{+}(\alpha^{-1})=\beta_r\cdots\beta_2\beta_1 ~ (\text{resp. } (\alpha^{-1})_{+}=\gamma_1\gamma_2\cdots \gamma_s)\] be the direct string of maximal length such that $_{+}(\alpha^{-1})\cdot\alpha^{-1}$ (resp. $\alpha^{-1}\cdot(\alpha^{-1})_{+}$) is a string.

\begin{prop}\label{prop1.1}\cite{[BR]}
The only AR-sequences that consist of string modules and that have the middle term indecomposable are
$$0\rightarrow M(_{-}\alpha)\rightarrow M(_{-}\alpha\cdot\alpha\cdot\alpha_{-})\rightarrow M(\alpha_{-})\rightarrow 0,$$
 where $\alpha\in Q_1$.
\end{prop}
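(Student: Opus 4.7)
The plan is to invoke Butler--Ringel's explicit description of almost split sequences for string algebras from \cite{[BR]}. Their construction associates to every non-injective string module $M(w)$ an AR-sequence
\[0\to \tau M(w)\to E\to M(w)\to 0\]
in which the middle term $E$ decomposes as a direct sum of at most two string modules, with one indecomposable summand contributed by each endpoint of $w$: at each end one either adjoins a \emph{hook} (extending $w$ on that side by a suitable arrow followed by its maximal inverse tail) or deletes a \emph{cohook} (removing a maximal direct tail at that end), and the corresponding summand vanishes precisely when neither operation is applicable at that endpoint.

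First I would argue that indecomposability of $E$ is equivalent to exactly one of the two endpoint-summands being zero. Scanning Butler--Ringel's prescription, non-applicability at an endpoint is precisely the condition that, reading $w$ from that endpoint, $w$ agrees with the maximal inverse string $\alpha_{-}$ associated to some arrow $\alpha\in Q_1$; this is exactly what the maximality clauses in the definitions of $\alpha_{-}$ and $_{-}\alpha$ enforce (any additional letter beyond $\alpha_{-}$ would either fail to be a string or violate the zero relations in $I$). Hence the only strings $w$ whose AR-sequence has an indecomposable middle term are, up to the equivalence $\rho$, of the form $w=\alpha_{-}$ for some $\alpha\in Q_1$.

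With $w=\alpha_{-}$, I would then compute the non-trivial modification at the opposite endpoint: the canonical hook-addition there prepends the arrow $\alpha$ together with the maximal inverse prefix $_{-}\alpha$, yielding the middle-term string $_{-}\alpha\cdot\alpha\cdot\alpha_{-}$. The corresponding Butler--Ringel formula for the AR-translate gives $\tau M(\alpha_{-})\cong M(_{-}\alpha)$, so the AR-sequence takes exactly the stated form. Conversely, uniqueness of AR-sequences ending at a fixed indecomposable shows that every AR-sequence of string modules with indecomposable middle term arises in this way.

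The main obstacle is bookkeeping: keeping straight the four endpoint-operations (hook versus cohook, at the left versus the right end) and verifying that the vanishing-summand condition matches exactly the maximality defining $\alpha_{-}$ and $_{-}\alpha$, rather than a spurious neighbouring case. Once this verification is in place, the explicit identification of $\tau M(\alpha_{-})$ and of the middle-term string follow directly from Butler--Ringel's rules, so no further computation is needed.
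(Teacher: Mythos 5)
The paper gives no proof of this proposition---it is quoted directly from Butler--Ringel \cite{[BR]}---and the machinery you invoke is exactly what the paper records immediately afterwards in Proposition \ref{prop1.2} and Theorem \ref{thm2.1}: the case $w\sim{}_{-}\alpha$ excluded from Proposition \ref{prop1.2} is precisely the case of an indecomposable middle term, so your route is the intended one and is essentially correct. Two imprecisions are worth fixing. First, the sequence $0\to\tau M(w)\to E\to M(w)\to 0$ exists when $M(w)$ is non-\emph{projective}, whereas the hook/cohook recipe as stated in Proposition \ref{prop1.2} describes the sequence \emph{starting} at a non-injective $M(w)$; you should commit to one of the two dual formulations, since in the first the degenerate string is $w\sim\alpha_{-}$ (the right-hand end term) and in the second it is $w\sim{}_{-}\alpha$ (the left-hand end term). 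Second, the pivotal equivalence ``an endpoint summand vanishes iff the whole of $w$ coincides with the relevant maximal string'' is asserted rather than verified: one must check that a cohook deletion consumes all of $w$ only when $w$ contains no direct letter on that side, which together with the uniqueness of continuations forced by the string-algebra axioms yields $w\sim{}_{-}\alpha$; and one must also note that both endpoint summands cannot vanish simultaneously for a module admitting such a sequence (that degenerate situation corresponds to $M(w)$ being injective, resp.\ projective, so no AR-sequence of the relevant kind exists there). With those points spelled out, the identification of the middle term as $M({}_{-}\alpha\cdot\alpha\cdot\alpha_{-})=M(({}_{-}\alpha)_h)$ and of $\tau M(\alpha_{-})\cong M({}_{-}\alpha)$ follows as you say.
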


Next we describe the AR-sequences with the middle term decomposable. We will see shortly that in this case,  the middle term in such a short exact sequence is a direct sum of two indecomposable modules.

\begin{defn}

(1) A string $w$ is {\em right directly extendable} (RDE) if there is an arrow $\alpha$ such that $w\alpha$ is a string.

(2) A string $w$ is {\em right inversely extendable} (RIE) if there is an  arrow $\beta$ such that $w\beta^{-1}$ is a string.

(3) A string $w$ is {\em left directly extendable} (LDE) if there is an  arrow $\alpha$ such that $\alpha w$ is a string.

(4) A string  $w$ is {\em left inversely extendable} (LIE) if there is an  arrow $\beta$ such that $\beta^{-1} w$ is a string.
\end{defn}

\begin{rem}
Comparing with the terminology in \cite{[BR]}, we have the following.

(1) A string $w$ is not RDE if and only if $w$ starts on a peak.

(2) A string $w$ is not RIE if and only if $w$ starts in a deep.

(3) A string $w$ is not LDE if and only if $w$ ends in a deep.

(4) A string $w$ is not LIE if and only if $w$ ends on a peak.
\end{rem}

If $w$ is RDE, then there exists an arrow $\alpha$ such that $w\alpha$ is a string. Let
$$\xymatrixcolsep{1.2pc}\xymatrixrowsep{0.6pc}\xymatrix{&&&&\bullet\ar[ld]_{\alpha}\ar[rrdd]^{(\alpha_{-})^{-1}}&&\\
w_h:=w\cdot\alpha\cdot\alpha_{-}=&\bullet \ar@{--}[rr]^{w}& &\bullet & & &\\
&&&&&&\bullet & }$$
We say $w_h$ is {\em obtained from $w$ by adding a hook from the right}. There is a canonical embedding $i:M(w)\rightarrow M(w_h)$.

{If
$w=u\cdot\beta^{-1}\cdot(\beta^{-1})_{+}$ for some string $u$ and some arrow $\beta$, $$\xymatrixcolsep{1.2pc}\xymatrixrowsep{0.6pc}\xymatrix{
&&&&&&\bullet\ar[lldd]^{(\beta^{-1})_{+}}\\
w=u\cdot\beta^{-1}\cdot(\beta^{-1})_{+}=& \bullet\ar@{--}[rr]^{u}&&\bullet\ar[rd]_{\beta}& & & \\
&&&&\bullet &&\\
}$$
then we say} $u$ is {\em obtained from $w$ by deleting a cohook from the right}.
In this case, $u$ is RIE and there is a canonical projection $p:M(w)\rightarrow M(u)$.
The string $w$ can be understood as being obtained from $u$ by adding a cohook from the right and so
we also write $w=u_c$.

If $w$ is LIE, then there exists an arrow $\alpha$ such that $\alpha^{-1}w$ is a string. Let
$$\xymatrixcolsep{1.2pc}\xymatrixrowsep{0.6pc}\xymatrix{
&&&\bullet\ar[lldd]_{_{+}(\alpha^{-1})}\ar[rd]^{\alpha}&&&&&&\\
_hw:=\ _{+}(\alpha^{-1})\cdot\alpha^{-1}\cdot w=&& && \bullet\ar@{--}[rr]^{w} &&\bullet\\
&\bullet&&&&&&&&
}$$
We say $_hw$ is {\em obtained from $w$ by adding a hook from the left}. There is a canonical embedding $i:M(w)\rightarrow M(_hw)$.

{If  $w={}_{-}\beta\cdot\beta\cdot u$ for some string $u$ and some arrow $\beta$,
$$\xymatrixcolsep{1.2pc}\xymatrixrowsep{0.6pc}\xymatrix{
&\bullet\ar[rrdd]_{(_{-}\beta)^{-1}}&&&&&&\\
w=\ _{-}\beta\cdot\beta\cdot u=&&&&\bullet\ar[ld]^{\beta}\ar@{--}[rr]^{u}&&\bullet &&\\
&&&\bullet&&&&
}$$
then we say} $u$ is {\em obtained from $w$ by deleting a cohook from the left}. In this case, $u$ is LDE and there is a canonical projection $p:M(w)\rightarrow M(u)$.
Similar to $u_c$ above, the string $w$ can be understood as being obtained from $u$ by adding a cohook from the left
and so we  also write
 $w={}_cu$.

\begin{prop} \cite{[BR]} \label{prop1.2}
Let $w$ be a string such that $M(w)$ is not injective and $w\not\sim{}_{-}\alpha$ for any $\alpha\in Q_1$.
\begin{itemize}
\item[(1)] If $w$ is RDE and LIE, then the following
$$0\rightarrow M(w)\xrightarrow{
\left(
                                                                         \begin{smallmatrix}
                                                                           i & i\\
                                                                         \end{smallmatrix}
                                                                       \right)} M(_hw)\oplus M(w_h)\xrightarrow{\left(
                                                                         \begin{smallmatrix}
                                                                           i \\
                                                                           -i \\
                                                                         \end{smallmatrix}
                                                                       \right)
} M(_hw_h)\rightarrow 0$$
is an AR-sequence where $_hw_h={}_h(w_h)=(_hw)_h$.

\item[(2)] If $w$ is RDE but not LIE, then $w={}_cu$ for some string $u$ and the following
$$0\rightarrow M(w)\xrightarrow{\left( \begin{smallmatrix}
                                        p & i\\
                                        \end{smallmatrix}
                                         \right)}
M(u)\oplus M(w_h)\xrightarrow{\left(\begin{smallmatrix}
                                         i \\
                                         -p \\
                                        \end{smallmatrix}
                                       \right)
} M(u_h)\rightarrow 0$$
is an AR-sequence.

\item[(3)] If $w$  is LIE but not RDE, then $w=u_c$ for some string $u$ and the following
$$0\rightarrow M(w)\xrightarrow{\left( \begin{smallmatrix}
                                        i & p\\
                                        \end{smallmatrix}
                                         \right)}
M(_hw)\oplus M(u)\xrightarrow{\left(\begin{smallmatrix}
                                         p \\
                                         -i \\
                                        \end{smallmatrix}
                                       \right)
} M(_hu)\rightarrow 0$$
is an AR-sequence.

\item[(4)] If $w$ is neither RDE nor LIE, then $w=$ $_cu_c={}_c(u_c)=({}_c u)_c$ for some string $u$ and the following
$$0\rightarrow M(w)\xrightarrow{\left( \begin{smallmatrix}
                                        p & p\\
                                        \end{smallmatrix}
                                         \right)}
M(u_c)\oplus M(_cu)\xrightarrow{\left(\begin{smallmatrix}
                                         p \\
                                         -p \\
                                        \end{smallmatrix}
                                       \right)
} M(u)\rightarrow 0$$
is an AR-sequence.
\end{itemize}
\end{prop}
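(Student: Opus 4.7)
The plan is to verify, for each of the four cases, that the displayed sequence is (i) a short exact sequence and (ii) almost split. Exactness is a routine verification from the explicit form of string modules, while almost-splitness is the substantive content and rests on the combinatorial classification of morphisms between indecomposable modules over a string algebra \cite{[BR]}.

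For exactness, consider case (1). Since $w$ is both RDE and LIE, hooks can be attached on each side of $w$, and the two intermediate strings ${}_hw$ and $w_h$ share precisely $w$ as their common substring inside ${}_hw_h$. Hence $M({}_hw_h)$ is the pushout of the canonical embeddings $M(w)\hookrightarrow M({}_hw)$ and $M(w)\hookrightarrow M(w_h)$, and the displayed sequence is the associated Mayer--Vietoris-type short exact sequence. Cases (2)--(4) run in parallel with one or both hook-additions replaced by cohook-deletions; in case (4), for example, $u$ is obtained from $w$ by deleting cohooks on both sides and a direct inspection identifies $M(w)$ with the kernel of the difference of the projections $M(u_c)\oplus M({}_cu)\twoheadrightarrow M(u)$. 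A dimension count confirms exactness in every case.

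To establish almost-splitness, one invokes the Butler--Ringel description of irreducible morphisms: between indecomposable string modules, every irreducible map is either a hook-addition inclusion or a cohook-deletion projection, and no irreducible map from a string module lands in a band module. Using this, one verifies that the indecomposable summands appearing in the middle term are exactly the targets of irreducible morphisms out of $M(w)$. RDE versus not-RDE produces a hook-addition $M(w)\hookrightarrow M(w_h)$ versus a cohook-deletion $M(w)=M(u_c)\twoheadrightarrow M(u)$ on the right (the hypothesis $w\not\sim {}_{-}\alpha$ together with Proposition \ref{prop1.1} rules out the indecomposable-middle-term case), and symmetrically for LIE on the left. Minimal right almost-splitness then follows from the graph-map description of $\textup{Hom}$-spaces between string modules: any non-section morphism $h:N\to E$ into the right-hand term $E$ decomposes as a sum of graph maps, each of which factors through one of the two summands of the middle term by the universal property of the hook-addition inclusions and cohook-deletion projections.

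The main obstacle is the factorization step, particularly in case (4) where both endpoints of $w$ lie on peaks and two cohook-deletions must be handled simultaneously; the overlapping combinatorics of the substrings underlying $M(u_c)$, $M({}_cu)$, and $M(u)$ must be tracked carefully to show that every graph map from a test module into $M(u)$ lifts to the middle term. Once this combinatorial verification is carried out, together with the dual check of minimal left almost-splitness at $M(w)$, the proposition follows.
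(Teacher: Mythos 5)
The paper gives no proof of this proposition: it is quoted directly from Butler--Ringel \cite{[BR]}, so there is no internal argument to compare yours against. Your outline does follow the standard route (exactness of the canonical sequences via pushout/pullback of the hook-inclusions and cohook-projections, then a factorization argument for almost-splitness), and the exactness half is essentially fine. The almost-splitness half, however, has a genuine circularity: you invoke ``the Butler--Ringel description of irreducible morphisms'' --- that every irreducible map between indecomposable string modules is a hook-addition inclusion or a cohook-deletion projection --- in order to identify the middle term, but that classification of irreducible maps is precisely a \emph{consequence} of knowing all the AR-sequences (it is the combined content of Propositions \ref{prop1.1} and \ref{prop1.2} and Theorem \ref{thm2.1}), not a fact available before the proposition is proved. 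What one may legitimately use is the combinatorial basis of $\operatorname{Hom}$-spaces between string (and band) modules by graph maps, and from that basis one must show directly that every non-retraction $N\to M({}_hw_h)$ (resp.\ $M(u_h)$, $M({}_hu)$, $M(u)$) factors through the middle term --- including test modules $N$ that are band modules, a case your sketch explicitly sets aside.

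Moreover, you identify this factorization as ``the main obstacle'' and then defer it entirely (``once this combinatorial verification is carried out \dots the proposition follows''). That verification \emph{is} the proposition: a non-split short exact sequence with indecomposable end terms need not be almost split, so exactness plus non-splitness alone establishes nothing. Two further pieces of the statement are also left unaddressed: the identities ${}_hw_h={}_h(w_h)=({}_hw)_h$ and ${}_cu_c={}_c(u_c)=({}_cu)_c$ (that hook and cohook operations on opposite ends commute), and the existence claims $w={}_cu$ in (2), $w=u_c$ in (3), and $w={}_cu_c$ in (4), which require showing that the hypotheses ``$M(w)$ not injective and $w\not\sim{}_{-}\alpha$'' force a string that is not LIE (resp.\ not RDE) to end in the pattern ${}_{-}\beta\cdot\beta\cdot u$ (resp.\ begin in the pattern $u\cdot\beta^{-1}\cdot(\beta^{-1})_{+}$). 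These are short arguments, but they are part of the assertion and are missing from your write-up.
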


\begin{thm}\cite{[BR]} \label{thm2.1}
Let $A$ be a string algebra. The Auslaner-Reiten sequences in $A\textup{-mod}$ are those described in Propositions \ref{prop1.1} and \ref{prop1.2}, together with those of the form
$$0\rightarrow M(w,s,\varphi)\rightarrow M(w,2s,\varphi')\rightarrow M(w,s,\varphi)\rightarrow 0$$
where $w\in\overline{\textup{Ba}}(A)$, $(K^s,\varphi)\in \mathcal{M}$  and $\varphi'$ is determined by the following AR-sequence in $K[T,T^{-1}]\textup{-mod}$,
$$0\rightarrow (K^s,\varphi)\rightarrow (K^{2s},\varphi')\rightarrow (K^s,\varphi)\rightarrow 0.$$
\end{thm}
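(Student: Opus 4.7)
The plan is to prove the classification by combining the classification of indecomposables in Theorem \ref{BRThm} with the explicit constructions already recorded in Propositions \ref{prop1.1} and \ref{prop1.2}, and then treating band modules by a separate functorial argument. First, since every AR-sequence $0\to L\to M\to N\to 0$ has indecomposable end terms, Theorem \ref{BRThm} tells us that each of $L$ and $N$ is either a string module or a band module. The initial task is therefore to show that the \emph{type} of end term is preserved, i.e.\ that an AR-sequence starting at a string module ends at a string module, and similarly for bands. This is obtained from a structural description of the space $\operatorname{Hom}_A(X,Y)$ for $X,Y$ indecomposable: every such morphism factors through a common substring/band datum, and the sink (respectively source) maps produced in Propositions \ref{prop1.1}, \ref{prop1.2} already involve only string modules (or only band modules), which forces $\tau N$ to be of the same type as $N$.

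Next, I would handle the string case by a case analysis on the combinatorial position of $w$. If $M(w)$ is non-injective, then $\tau^{-1} M(w)$ exists, and the hook/cohook operations produce a candidate short exact sequence as in Propositions \ref{prop1.1} and \ref{prop1.2} depending on whether $w$ is RDE, LIE, both, or neither; the exceptional case $w\sim {}_{-}\alpha$ yields the indecomposable middle-term sequence of Proposition \ref{prop1.1}. In each case one verifies that the candidate is an AR-sequence by: (i) checking it is non-split exact by a direct dimension count on the unfolded representation $U(w)$, and (ii) showing that any non-isomorphism $M(w)\to X$ with $X$ indecomposable factors through the prescribed middle term, using the canonical factorizations of string maps through admissible quotients and submodules.

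Finally, for the band modules I would exploit the functor $F_w:K[T,T^{-1}]\text{-mod}\to A\text{-mod}$ defined by $(K^s,\varphi)\mapsto M(w,s,\varphi)$. The functor $F_w$ sends indecomposables to indecomposables, preserves non-split exact sequences, and is compatible with the AR-translation on the indecomposables over a fixed band $w$ (different bands yield disjoint tubes). Lifting the AR-sequence $0\to(K^s,\varphi)\to(K^{2s},\varphi')\to(K^s,\varphi)\to 0$ in $K[T,T^{-1}]\text{-mod}$ via $F_w$ produces the claimed AR-sequence; the almost-split property transfers because any morphism $M(w,s,\varphi)\to M(w',t,\psi)$ with $w\not\sim w'$ vanishes, and morphisms within a single band $w$ are in the image of $F_w$. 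The main obstacle in all of this is step (ii) of the string case: systematically describing all morphisms out of $M(w)$ and showing every non-section factors through $M({}_hw)\oplus M(w_h)$ (or its analogues) requires the full Butler-Ringel formalism of \emph{graph maps} and their factorizations, which is the combinatorial heart of the argument and where the bookkeeping genuinely needs to be performed.
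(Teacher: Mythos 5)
First, a point of reference: the paper contains no proof of this statement --- it is quoted from Butler--Ringel \cite{[BR]} as a known result --- so your proposal has to stand on its own as a reconstruction of the original argument rather than be compared with anything in the text. Judged that way, your outline does identify the correct architecture: by Theorem \ref{BRThm} every non-injective indecomposable is a string or band module, Propositions \ref{prop1.1} and \ref{prop1.2} exhibit a candidate sequence starting at each non-injective string module, the displayed sequence starts at each band module, and uniqueness of the almost split sequence starting at a given non-injective indecomposable then yields the classification. (This also means your preliminary step --- proving that the AR-translate of a string module is a string module and likewise for bands --- is not a prerequisite but a corollary of the construction; as you present it, it already presupposes the Hom-space description you only invoke later.)

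The genuine gap is the one you name yourself: step (ii), the verification that every non-section out of $M(w)$ factors through the prescribed middle term. Exactness and non-splitness of the candidate sequences are routine; the entire content of the theorem is the almost split property, which requires a complete description of $\operatorname{Hom}_A(M(w),X)$ by graph maps and their canonical factorizations. Deferring this to ``the full Butler--Ringel formalism'' means the proof is not carried out. In addition, the band case rests on two claims that are false as stated: the functor $F_w$ is exact and faithful but \emph{not} full, so morphisms between band modules over the same band need not lie in the image of $F_w$ (there are graph maps factoring through string modules); and $\operatorname{Hom}_A(M(w,s,\varphi),M(w',t,\psi))$ need not vanish for inequivalent bands $w\not\sim w'$, since distinct bands can share substrings. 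What actually has to be shown --- and is the nontrivial part of \cite{[BR]} --- is that all such maps, together with all maps from string modules into $M(w,s,\varphi)$, factor through $M(w,2s,\varphi')$; this does not follow from the lifting argument you describe.
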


\section{The Auslaner-Reiten quivers and $\tau$-locally free modules of  string algebras of type $\widetilde{C}_{n-1}$}

In this section, we introduce the notion of minimal string modules to study the
 AR-quiver of  string algebras $H$ of type $\widetilde{C}_{n-1}$. We will explicitly describe all the connected components of
 the AR-quiver and $\tau$-locally free $H$-modules.

\subsection{Minimal string modules} In this subsection,  $A$ can be any string algebra.
\begin{defn}\label{Def:minstmod}
A string $A$-module $M(w)$ is called {\em minimal}  if  in the AR-quiver of $A$, each irreducible map   $M(w)\rightarrow M(u)$ is injective and each irreducible map  $M(v)\rightarrow M(w)$ is surjective.
\end{defn}

We will see later in the next subsection that minimal string modules play an important role in determining the  connected components of the AR-quiver of the string algebra $H$ of type $\widetilde{C}_{n-1}$.
\begin{prop}\label{prop3.0}
There exists at least one minimal string module for each connected component of $\Gamma_A$ containing string modules.
\end{prop}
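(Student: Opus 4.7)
The plan is to exhibit a minimal string module in any given component $\mathcal{C}$ of $\Gamma_A$ containing string modules by taking one whose underlying string has minimum length among strings appearing in $\mathcal{C}$.

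The first step is to observe, via Theorem \ref{thm2.1}, that AR-sequences whose terms are band modules involve only band modules, so string modules and band modules form disjoint unions of components of the AR-quiver; in particular $\mathcal{C}$ consists entirely of string modules. It is then meaningful and well-defined to set
\[ \ell := \min\{|w| : M(w) \in \mathcal{C}\}, \]
where $|w|$ denotes the length of the string $w$; the minimum exists because $\mathcal{C}$ is non-empty and string lengths are non-negative integers. Fix any string $w$ with $|w| = \ell$ and $M(w) \in \mathcal{C}$.

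The key step is the following observation extracted from Propositions \ref{prop1.1} and \ref{prop1.2}: every irreducible map between string modules is (up to a scalar) either a canonical inclusion $i$ obtained by adding a hook, which strictly increases the length of the underlying string, or a canonical projection $p$ obtained by deleting a cohook, which strictly decreases the length. Consequently, if some outgoing irreducible map $M(w) \to M(u)$ were not injective, it would be such a projection and would force $|u| < \ell$; and if some incoming irreducible map $M(v) \to M(w)$ were not surjective, it would be such an inclusion and would force $|v| < \ell$. Both possibilities contradict the minimality of $\ell$, so every outgoing irreducible map from $M(w)$ is injective and every incoming irreducible map to $M(w)$ is surjective. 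Hence $M(w)$ satisfies Definition \ref{Def:minstmod}.

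The main obstacle I anticipate is justifying the hook/cohook description of irreducible maps in the boundary cases excluded from Proposition \ref{prop1.2}, namely when $M(w)$ is injective (so no AR-sequence starts at $M(w)$) or projective (so no AR-sequence ends at $M(w)$). In those cases the irreducible maps from, or to, $M(w)$ are instead governed by the canonical maps $M(w) \to M(w)/\mathrm{soc}\,M(w)$ and $\mathrm{rad}\,M(w) \to M(w)$; since the indecomposable summands appearing there are still string modules of strictly smaller length, the length-comparison argument applies uniformly and the conclusion carries over in all cases.
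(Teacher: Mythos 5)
Your proof is correct and takes essentially the same approach as the paper: both select a string module of smallest size in the given component (you minimize the string length, the paper descends on dimension, which amounts to the same thing for string modules) and observe that any failure of minimality would produce a strictly smaller string module in the same component. The only cosmetic difference is that you justify the size comparison via the explicit hook/cohook form of the irreducible maps from Propositions \ref{prop1.1} and \ref{prop1.2}, whereas the paper invokes the general fact that an irreducible map between indecomposables is a proper monomorphism or a proper epimorphism.
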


\begin{proof}
Assume that $\mathcal{T}$ is a connected  component containing a string module  $M(w)$.
{Theorem \ref{thm2.1} implies that all modules in $\mathcal{T}$ are string modules, as band modules are contained
in homogeneous tubes and any module in a homogeneous tube is a band module}.
If $M(w)$ is not minimal, then by definition there exists an irreducible surjection $f_1:M(w)\rightarrow M(w_1)$ or an irreducible injection $g_1: M(w_1)\rightarrow M(w)$ for some string $w_1$.
In either case $\textup{dim} M(w_1)<\textup{dim} M(w)$.
If $M(w_1)$ is not minimal, then
repeat the same procedure to find a string module with smaller dimension. This procedure will terminate eventually.
Then we obtain  a minimal string module in the component.
\end{proof}

For a string $x$, denote by $[x]$ the equivalence class of $x$.

\begin{defn}
Let $w$ be a string.

(1) The  cardinality $\textup{I}_\textup{{l}}(w)$ of the set $\{[u]|g:M(u)\rightarrow M(w)\ \textup{ is irreducible}\}$ is called {\em the left index} of $w$.

(2) The cardinality $\textup{I}_{\textup{r}}(w)$ of the set $\{[v]|f:M(w)\rightarrow M(v)\ \textup{ is irreducible}\}$ is called  {\em the right index} of $w$.

(3) The pair $\textup{I}(w)=(\textup{I}_{\textup{l}}(w),\textup{I}_{\textup{r}}(w))$ is called {\em the index} of $w$. We say $w$ is of {\em type} $(a,b)$ if $\textup{I}_{\textup{l}}(w)=a$ and $\textup{I}_{\textup{r}}(w)=b$. In this case, we also say that the string module $M(w)$ is of {\em type} $(a,b)$.
\end{defn}

Recall that we write $u\sim w$ if the two strings $u$ and $w$ are equivalent and
$u\not\sim w$ otherwise.

\begin{lem} \label{rem3.1}
Let $w$ be a string.  Then
\begin{itemize}
\item[(1)] $\textup{I}(w)\in\{(0,1),(1,0),(1,1),(0,2),(2,0),(1,2),(2,1),(2,2)\}$.

\item[(2)] $\textup{I}_{\textup{l}}(w)=0$ if and only if $M(w)$ is a simple projective module.
Dually, $\textup{I}_{\textup{r}}(w)=0$ if and only if $M(w)$ is a simple injective module.

\item[(3)] $\textup{I}_{\textup{l}}(w)=1$ if and only if $M(w)$ is either a projective module such that $\textup{rad}M(w)$ is indecomposable or  $w\sim \alpha_{-}$ for some arrow $\alpha\in Q_1$. Dually,
$\textup{I}_{\textup{r}}(w)=1$ if and only if $M(w)$ is either an injective module such that $M(w)/\textup{soc}M(w)$ is indecomposable or $w\sim$ $_{-}\beta$ for some arrow $\beta\in Q_1$.
\end{itemize}
\end{lem}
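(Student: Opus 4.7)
The plan is to combine Butler-Ringel's description of AR-sequences (Propositions~\ref{prop1.1} and~\ref{prop1.2}) with standard Auslander-Reiten theory. Recall that, for an indecomposable $A$-module $M$, irreducible maps $N\to M$ correspond up to isomorphism of $N$ either to the indecomposable summands of the middle term of the AR-sequence $0\to\tau M\to E\to M\to 0$ (when $M$ is non-projective) or to the indecomposable summands of $\mathrm{rad}(M)$ (when $M$ is projective); the dual statement holds for maps out of $M$, with $M/\mathrm{soc}(M)$ replacing $\mathrm{rad}(M)$ in the injective case. Since summands of radicals of string projectives, of cosocle-quotients of string injectives, and of middle terms between string modules are themselves string modules, counting equivalence classes $[u]$ will reduce to counting indecomposable summands.

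For part~(1), direct inspection of Propositions~\ref{prop1.1} and~\ref{prop1.2} shows that the middle term of any AR-sequence between string modules decomposes into at most two indecomposable summands, and the same bound holds for $\mathrm{rad}(M(w))$ and $M(w)/\mathrm{soc}(M(w))$ in the projective and injective cases respectively, as a consequence of the ``at most two arrows at each vertex in each direction'' axiom of a string algebra. Hence $\textup{I}_{\textup{l}}(w),\textup{I}_{\textup{r}}(w)\le 2$. The value $(0,0)$ is ruled out since it would force $M(w)$ to be simple projective and simple injective simultaneously, corresponding to an isolated vertex of $Q$, which we exclude tacitly.

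For~(2), the condition $\textup{I}_{\textup{l}}(w)=0$ forces $M(w)$ to be projective (otherwise the AR-sequence ending at $M(w)$ has a nonzero middle term, contributing at least one irreducible map into $M(w)$); then $\mathrm{rad}(M(w))$ has no indecomposable summands, so $M(w)$ is simple. The converse is immediate. For~(3), I would split into cases depending on whether $M(w)$ is projective. If $M(w)$ is projective, $\textup{I}_{\textup{l}}(w)=1$ iff $\mathrm{rad}(M(w))$ is indecomposable. If $M(w)$ is not projective, $\textup{I}_{\textup{l}}(w)=1$ iff the middle term of its AR-sequence is indecomposable; by Proposition~\ref{prop1.1} this is equivalent to $M(w)\cong M(\alpha_-)$ for some $\alpha\in Q_1$, i.e., $w\sim\alpha_-$. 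The dual statements for $\textup{I}_{\textup{r}}(w)$ follow by the symmetric argument using AR-sequences starting at $M(w)$ and $M(w)/\mathrm{soc}(M(w))$.

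The main obstacle I expect is verifying that the count of indecomposable summands genuinely coincides with the count of $\rho$-equivalence classes $[u]$. In each case of Proposition~\ref{prop1.2} the two middle-term summands are obtained by modifying $w$ on opposite ends (a hook or cohook from the left versus one from the right), and one must check that these constructions always yield strings that are not equivalent under reversal; the corresponding check is needed for the two summands of $\mathrm{rad}(M(w))$ in the projective case. Both checks are routine case-by-case inspections of the underlying walks, but they are the step deserving the most care.
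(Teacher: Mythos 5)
Your proposal is correct and follows essentially the same route as the paper, whose entire proof is the one-line citation of Theorem~\ref{thm2.1}, general Auslander--Reiten theory, and Proposition~\ref{prop1.1}; you have simply filled in the details that citation compresses (irreducible maps into $M(w)$ correspond to summands of the AR-sequence middle term or of $\operatorname{rad}M(w)$, both bounded by two summands for a string algebra, with Proposition~\ref{prop1.1} characterizing the indecomposable-middle-term case as $w\sim\alpha_{-}$). The point you flag at the end --- that the two middle-term summands (resp.\ the two summands of the radical) are never equivalent strings, so that counting classes $[u]$ agrees with counting summands --- is indeed the only nontrivial verification, and the paper leaves it implicit in the reference to \cite{[BR]}.
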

\begin{proof}
(1)  follows from Theorem \ref{thm2.1}, (2) and (3) follow from the general Auslander-Reiten theory \cite{[AR]} and Proposition \ref{prop1.1}.
\end{proof}

\begin{lem}\label{lem3.0}
An indecomposable projective $A$-module $P$ is minimal if and only if $P$ is simple. Dually, an indecomposable injective $A$-module $I$ is minimal if and only if $I$ is simple.
\end{lem}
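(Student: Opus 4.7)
The approach is a direct application of Auslander-Reiten theory at the boundary of the AR-quiver. The key input is the standard description of irreducible maps involving projective and injective modules: irreducible maps with target an indecomposable projective $P$ are, up to isomorphism of the source, the canonical inclusions $M_i \hookrightarrow P$ where $M_i$ ranges over the indecomposable direct summands of $\operatorname{rad} P$, and no AR-sequence ends at $P$. Dually, irreducible maps with source an indecomposable injective $I$ are the canonical projections $I \twoheadrightarrow N_j$ where $N_j$ ranges over the indecomposable summands of $I/\operatorname{soc} I$, and no AR-sequence starts at $I$.

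For the projective statement, the ``if'' direction is immediate: if $P$ is simple, then $\operatorname{rad} P = 0$, so there are no irreducible maps into $P$ and the surjectivity half of Definition \ref{Def:minstmod} is vacuous; any nonzero morphism out of $P$ is injective because $P$ is simple, so in particular every irreducible map out of $P$ is injective. For the ``only if'' direction I argue contrapositively: if $P$ is not simple, then $\operatorname{rad} P \neq 0$ has an indecomposable summand $M$, and the irreducible inclusion $M \hookrightarrow P$ is not surjective because its image is contained in $\operatorname{rad} P \subsetneq P$; this violates the incoming-surjectivity condition, so $P$ fails to be minimal. The injective case is handled symmetrically: if $I$ is simple there are no outgoing irreducible maps and any incoming irreducible map is surjective because $I$ is simple; if $I$ is not simple, an indecomposable summand $N$ of $I/\operatorname{soc} I$ provides an irreducible projection $I \twoheadrightarrow N$ whose kernel contains $\operatorname{soc} I \neq 0$, so it is not injective, contradicting minimality.

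I do not anticipate any real obstacle; the argument is a routine boundary analysis. The only point worth verifying is that the source $M$ of the irreducible inclusion $M \hookrightarrow P$ (and, dually, the target $N$ of the irreducible projection $I \twoheadrightarrow N$) is itself a string module, so that Definition \ref{Def:minstmod} actually applies. This is automatic: as noted in the proof of Proposition \ref{prop3.0}, Theorem \ref{thm2.1} forces every connected component of $\Gamma_A$ that contains a string module to consist entirely of string modules, and $P$ (respectively $I$) is such a module.
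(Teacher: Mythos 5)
Your proposal is correct and follows essentially the same route as the paper: both directions rest on the standard fact that the irreducible maps into an indecomposable projective $P$ are the inclusions of the indecomposable summands of $\operatorname{rad}P$ (coming from the right almost split map $\operatorname{rad}P\hookrightarrow P$), which are never surjective when $P$ is not simple, together with the dual statement for injectives. Your additional remark that the summands of $\operatorname{rad}P$ are themselves string modules is a reasonable sanity check but not needed beyond what the paper already records.
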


\begin{proof}
We only prove the first assertion, the second one follows by duality. If $P$ is simple, then $P$ is minimal by definition.
Next assume that $P$ is not simple. Then  $\textup{rad}P\neq0$ and the embedding $\textup{rad}P\hookrightarrow P$ is a right almost split map. So the restriction of the embedding to any indecomposable summand is irreducible. This implies that
$P$ is not minimal. Therefore if $P$ is minimal, then it is simple.
\end{proof}

\pagebreak

\begin{lem}\label{lem3.2}
Let $w$ be a string.
\begin{itemize}
\item[(1)]
The string module $M(w)$ is minimal of type $(1,1)$  if and only if
 $w\sim \alpha_{-}$ for some  $\alpha\in Q_1$ and $w\sim{} _-\beta$ for some  $\beta\in Q_1$.

\item[(2)] The string module $M(w)$ is minimal of type $(1,2)$ if and only if $M(w)$ is not injective,   $w \sim \alpha_{-}$ for some $\alpha\in Q_1$ but  $w\not\sim{}_-\beta$ for any $\beta\in Q_1$, and $w$ is RDE and LIE.

\item[(3)] The string module $M(w)$ is minimal of type $(2,1)$ if and only if $M(w)$ is not projective,
 $w\sim{}_-\alpha$ for some $\alpha\in Q_1$ but $w\not\sim \beta_{-}$ for any $\beta\in Q_1$, and $w$ is RIE and LDE.

\item[(4)] The string module $M(w)$ is minimal of type $(2,2)$ if and only if $M(w)$ is neither projective nor injective,
 $w\not\sim {}_-\alpha$ and $w\not\sim \alpha_{-}$ for any $\alpha\in Q_1$,
 and  $w$  is RDE, RIE, LDE and LIE.
\end{itemize}
\end{lem}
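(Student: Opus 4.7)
My plan is to prove each of the four equivalences by separately analyzing the two defining properties of minimality: (A) every irreducible map from $M(w)$ is injective, and (B) every irreducible map into $M(w)$ is surjective. Combining (A) and (B) with Lemma \ref{rem3.1} to read off the type $(a,b)$ then produces the four cases in one sweep.

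For (A), I would apply Propositions \ref{prop1.1} and \ref{prop1.2} to the AR-sequence starting at $M(w)$, which I may assume exists ($M(w)$ non-injective). Inspecting the four cases of Proposition \ref{prop1.2}, the irreducible maps out of $M(w)$ are canonical inclusions $i$ (coming from hook additions) or canonical projections $p$ (coming from cohook deletions), and they are all inclusions exactly in Case (1), i.e., when $w$ is both RDE and LIE. In the indecomposable middle-term situation of Proposition \ref{prop1.1}, the single map out is the inclusion $M({}_-\beta) \hookrightarrow M({}_-\beta \cdot \beta \cdot \beta_-)$, which occurs precisely when $w \sim {}_-\beta$. Hence (A) holds iff either $w \sim {}_-\beta$ for some $\beta \in Q_1$, or $M(w)$ is non-injective with $w \not\sim {}_-\beta$ and $w$ both RDE and LIE.

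For (B), I would argue dually via AR-translation: the irreducible maps into $M(w)$ are the components of the right almost split map in the AR-sequence starting at $\tau M(w)$. Applying Proposition \ref{prop1.2} to $\tau M(w) = M(v)$ (so now $M(w)$ is assumed non-projective), both maps into $M(w)$ are canonical projections $p$---that is, surjections---only in Case (4), where $v = {}_c u_c$ and $M(w) = M(u)$. Unwinding the cohook construction, this occurs precisely when $w$ is LDE and RIE, since these are exactly the conditions required for ${}_c w$ and $w_c$ to exist, and any string ${}_c w_c$ is automatically neither RDE nor LIE (its two endpoints are forced to be maximal by the definitions of ${}_-\delta$ and $(\gamma^{-1})_+$). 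Together with the indecomposable middle-term case $w \sim \alpha_-$, this gives: (B) holds iff either $w \sim \alpha_-$, or $M(w)$ is non-projective with $w \not\sim \alpha_-$ and $w$ both LDE and RIE.

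Combining (A) and (B) and matching left/right indices via Lemma \ref{rem3.1} produces the four characterizations directly. The alternative possibilities in Lemma \ref{rem3.1}(3)---that $M(w)$ be a non-simple projective with indecomposable radical or a non-simple injective with indecomposable socle quotient---are automatically ruled out for minimal modules by Lemma \ref{lem3.0}, so they contribute no extra cases. The main obstacle, as I anticipate, lies in part (B): Proposition \ref{prop1.2} describes the AR-sequence from the starting module, so I must carefully invert the hook/cohook constructions to translate ``Case (4) for $\tau M(w)$'' into conditions purely on $w$. Once that inversion is secured, the rest reduces to bookkeeping across the four types.
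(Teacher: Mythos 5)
Your proposal is correct and follows essentially the same route as the paper: both arguments combine Lemma \ref{rem3.1}, Lemma \ref{lem3.0}, and a case analysis of Propositions \ref{prop1.1} and \ref{prop1.2} to decide when all maps out of $M(w)$ are injective and all maps into $M(w)$ are surjective. Your explicit treatment of the incoming maps via the AR-sequence starting at $\tau M(w)$ (inverting the cohook construction) simply supplies the details the paper dismisses with ``(3) and (4) can be similarly proved.''
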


\begin{proof}
(1) follows from Lemma \ref{rem3.1} (3) and Lemma \ref{lem3.0}.
Now we prove (2).  If $M(w)$ is minimal of type $(1,2)$, then $M(w)$ is neither projective nor injective by Lemma \ref{lem3.0} and Lemma \ref{rem3.1} (2). Since $\textup{I}_{\textup{l}}(w)=1$ and $\textup{I}_{\textup{r}}(w)=2$, we have $w\sim \alpha_{-}$ for some arrow $\alpha\in Q_1$ but $w\not\sim  {}_{-}\beta$ for any $\beta\in Q_1$ by Lemma \ref{rem3.1} (3). Note that $w$ is minimal, $w$ is RDE and LIE by Proposition \ref{prop1.2}. The converse is a direct consequence of Proposition \ref{prop1.1} and Proposition \ref{prop1.2} (1).  (3) and (4) can be similarly proved, we skip the details.
\end{proof}

\subsection{String algebras of type $\widetilde{C}_{n-1}$}

{\it For the remaining part of this paper we assume that $H=H(C, \Omega, D)$ with
$C$ the Cartan matrix of type $\widetilde{C}_{n-1}$ and $D$ the minimal symmetrizer
$\textup{diag}(2, 1, \dots, 1, 2)$, unless otherwise stated.} That is, $H$ is the quotient path algebra
$KQ/I$, { where $Q$ is a quiver of type $A_n$ when the loops at 1 and $n$ are removed},
$$ \xymatrix{
\varepsilon_1 \circlearrowleft  1 \ar@{-}[r] & 2\ar@{-}[r]&  \cdots \ar@{-}[r]  &n\circlearrowleft \varepsilon_n\\
}$$
and $I=<\varepsilon_1^2,\varepsilon_n^2>$.
Then $H$ is a string algebra. Moreover it is a gentle algebra.  
We say $H$ is a {\em string algebra  of type} $\widetilde{C}_{n-1}$. In this subsection, we will first describe minimal string $H$-modules in a more concrete way, using Lemma \ref{lem3.2} and Proposition \ref{prop1.2}, and then construct explicitly connected components of the AR-quiver of $H$.

Let $Q^0$ be the quiver obtained from $Q$ by deleting the two loops $\varepsilon_1$ and $\varepsilon_n$. Then $Q^0$ is a quiver of type ${A}_n$. We will describe connected components of the AR-quiver of $H$.  The orientations of the arrows in $Q^0$ incident at 1 and $n$ are particularly relevant to the description of the component containing the indecomposable projective $A$-modules and the proof. There are four possibilities,  (Fig.~1), (Fig.~2) and their opposite quivers. By duality, we only need to consider the two cases, (Fig.~1) and (Fig.~2), where the difference is that both $1$ and $n$ are sources in $Q^0$ in (Fig.~2), but only 1 is a source in (Fig.~1).


 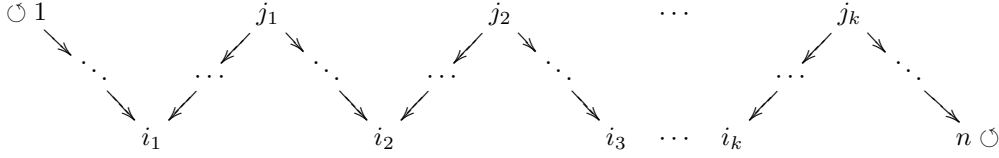
\begin{figure}
$$ \xymatrixcolsep{0.5pc}\xymatrixrowsep{0.5pc}\xymatrix{
\circlearrowleft1\ar[rd] &  &  &  &  j_1\ar[rd]\ar[ld] &  &  &  & j_2 \ar[rd]\ar[ld] &  & &\cdots & & & j_k \ar[rd]\ar[ld] & &  & \\
&\ddots\ar[rd]&&\cdots\ar[ld]&&\ddots\ar[rd]&&\cdots\ar[ld]&&\ddots\ar[rd]&&&&\cdots\ar[ld]&&\ddots\ar[rd]&\\
&& i_1 &&&& i_2 &&&& i_3&\cdots& i_k &&&& n\circlearrowleft\\
}$$
\caption{$Q$ is of type (3-1)}
\end{figure}

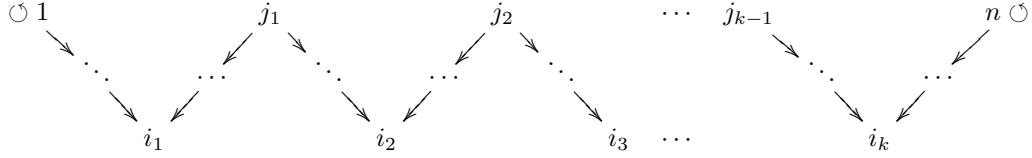
\begin{figure}
$$ \xymatrixcolsep{0.5pc}\xymatrixrowsep{0.5pc}\xymatrix{
\circlearrowleft1\ar[rd] &  &  &  &  j_1\ar[rd]\ar[ld] &  &  &  & j_2 \ar[rd]\ar[ld] &  & & \cdots & j_{k-1}\ar[rd] & & & &  n\circlearrowleft\ar[ld]\\
&\ddots\ar[rd]&&\cdots\ar[ld]&&\ddots\ar[rd]&&\cdots\ar[ld]&&\ddots\ar[rd]&&&&\ddots\ar[rd]&&\cdots\ar[ld]\\
&& i_1 &&&& i_2 &&&& i_3&\cdots& &&i_k&\\
}$$
\caption{ $Q$ is of type (3-2)}
\end{figure}


By the definition of $Q$, there is at most one arrow between two vertices $i, j$ and we denote the arrow  by $\alpha_{ji}$ if there is one from $i$ to $j$.
As the way the vertices are labelled,   $|i-j|=1$ if there is an arrow between two distinct
vertices $i$ and $j$.
{Recall that a vertex $i$ is {\em admissible} if it is a sink or a source.}

\begin{prop}\label{prop2.1} 
The following are the minimal string modules of $H$ up to isomorphism.
\begin{itemize}
\item[(1)] Type (0,2): $S_{i}$ with $i$ a sink  in $Q$.

\item[(2)] Type (2,0): $S_{i}$ with $i$ a source  in $Q$.

\item[(3)] Type (1,1): 
{ $M(\alpha_{-})$, where $\alpha$ is an arrow in $ Q^0$}.


\item[(4)] Type (1,2):
 $M((\varepsilon_1)_{-})$ and $M((\varepsilon_n)_{-})$.

\item[(5)] Type (2,1):  $M({}_{-}(\varepsilon_1))$ and $M({}_{-}(\varepsilon_n))$.

\item[(6)] Type (2,2): $M(w)$, where $w=c_1\dots c_m$ is a string with $\{s(w),t(w)\}\subseteq\{1,n\}$ and neither $c_1$ nor $c_m$ is a loop or the formal inverse of a loop. For instance, when the starting vertex and the ending vertex of $w$ are $n$ and 1 respectively, then  $w$ is a walk of the form
$$\xymatrix{1\ar@{-}[r]& 2\ar@{-}[r]&\cdots\ar@{-}[r]& n-1\ar@{-}[r] & n.}$$
\end{itemize}
\end{prop}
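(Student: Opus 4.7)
The plan is to apply Lemma \ref{lem3.2} systematically to each index type $(a,b)$, exploiting the specific combinatorial structure of $H$: its quiver $Q$ is the $A_n$-line augmented with loops $\varepsilon_1$ and $\varepsilon_n$ at the endpoints, subject to $\varepsilon_1^2 = \varepsilon_n^2 = 0$. Items (1) and (2) follow immediately from Lemma \ref{rem3.1}(2) together with Lemma \ref{lem3.0}: the loops prevent $1$ and $n$ from being sinks or sources in $Q$, so the simple projective modules are exactly the $S_i$ with $i$ a sink in $Q$, and dually for injectives.

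For items (3)--(5), the main calculation is an explicit description of $\alpha_-$ and ${}_-\beta$. For any arrow $\alpha$, the string $\alpha_- = \beta_1^{-1}\cdots\beta_r^{-1}$ is obtained by letting $\beta_1$ be the arrow at $s(\alpha)$ distinct from $\alpha$ (when it exists) and then extending greedily, using the string-algebra axiom that the next $\beta_{i+1}$ satisfying $s(\beta_{i+1}) = t(\beta_i)$ and $\beta_{i+1}\beta_i \notin I$ is unique whenever it exists. The recipe for ${}_-\beta$ is dual. With these explicit forms in hand, I would verify that for each non-loop arrow $\alpha \in Q^0$ there is a unique arrow $\beta$ with $\alpha_- \sim {}_-\beta$, obtained by reading the zigzag $\alpha \cdot \alpha_-$ from the other end; Lemma \ref{lem3.2}(1) then classifies $M(\alpha_-)$ as minimal of type $(1,1)$. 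For a loop $\alpha = \varepsilon_i$ with $i \in \{1, n\}$, the zigzag $(\varepsilon_i)_-$ terminates at the internal vertex $2$ or $n-1$, at which no matching $\beta$ can be found, so $(\varepsilon_i)_- \not\sim {}_-\beta$ for any $\beta$; however $(\varepsilon_i)_-$ remains RDE and LIE, so Lemma \ref{lem3.2}(2) yields type $(1,2)$. Item (5) is dual to (4).

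For item (6), the four-way extendability demanded by Lemma \ref{lem3.2}(4) forces each endpoint of the walk of $w$ to admit both a direct-arrow and an inverse-arrow extension. In $Q$, only vertices $1$ and $n$ possess two incoming and two outgoing arrows (the loop plus one non-loop), so both endpoints of the walk of $w$ must lie in $\{1, n\}$. The relations $\varepsilon_1^2, \varepsilon_n^2$ further prevent the first and last letters of $w$ from being a loop or its formal inverse, since otherwise the required extension on that side would demand a second copy of the same loop. The conditions $w \not\sim \alpha_-$ and $w \not\sim {}_-\beta$ then follow automatically: every string of these forms either carries a loop-letter at one boundary or has its walk terminating at an internal vertex, both of which are incompatible with the established shape of $w$.

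The main obstacle is the combinatorial bookkeeping in items (3)--(5), where the matching $\alpha \leftrightarrow \beta$ must be verified across the four orientation patterns of $Q$ (Figures 1 and 2 and their opposites); a unifying observation is that $\alpha \cdot \alpha_-$ is precisely the maximal zigzag through $s(\alpha)$ starting with $\alpha$ and avoiding squares of loops, and this zigzag is symmetric under reading from the opposite end exactly when $\alpha$ is a non-loop arrow of $Q^0$. This symmetry/asymmetry dichotomy is the conceptual reason that the loops $\varepsilon_1$ and $\varepsilon_n$ are the ``defect'' producing the asymmetric types $(1,2)$ and $(2,1)$.
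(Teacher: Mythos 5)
Your proposal is correct and follows essentially the same route as the paper: items (1)--(2) via the simple projective/injective observation, item (6) via Lemma \ref{lem3.2}(4), and items (3)--(5) by explicitly computing $\alpha_{-}$ and ${}_{-}\alpha$ for every arrow and matching them up, which is precisely the paper's case list (a)--(h) for the two orientation types. One small correction to your description of item (4): $(\varepsilon_1)_{-}$ terminates at the first sink $i_1$ of $Q^0$ (and acquires a trailing $\varepsilon_n^{-1}$ when $i_1=n$), not at vertex $2$, and the reason $(\varepsilon_1)_{-}\not\sim{}_{-}\beta$ is that the natural candidate ${}_{-}(\alpha_{i_1,i_1+1})$ is strictly longer by maximality (it continues through the loop $\varepsilon_1$), rather than that no candidate arrow exists --- but this is exactly what the explicit computation reveals, so your method goes through unchanged (remember also to record that $M((\varepsilon_1)_{-})$ and $M((\varepsilon_n)_{-})$ are not injective, as Lemma \ref{lem3.2}(2) requires).
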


\begin{proof}
Observe that the only simple projective or injective modules are those corresponding to sinks or sources in $Q$,  which  can only occur in the middle of the quiver. So (1) and (2) are true.
(6) follows from Lemma \ref{lem3.2} (4).
 To prove (3), (4) and (5), we compute
 $M(\alpha_{-})$ and $M(_{-}\alpha)$ for all $\alpha\in Q_1$.

Case I:  $Q$ is of type (3-1) as in (Fig.~1). Set $j_0=1$ and $i_{k+1}=n$. Then we have the following:

\vspace{1mm}

\begin{itemize}
\item[(a)]
$M((\alpha_{p+1,p})_{-})=M({}_{-}(\alpha_{p,p-1}))=S_p\ \text{for} \ j_r<p<i_{r+1}\ \text{and}\ 0\leq r\leq k$.

\item[(b)] $M((\alpha_{q-1,q})_{-})=M({}_{-}(\alpha_{q,q+1}))=S_q\ \text{for} \ i_r<q<j_{r}\ \text{and}\ 1\leq r\leq k$.

\item[(c)]$M((\alpha_{21})_{-})=M({}_{-}(\alpha_{i_1,i_1+1}))$.

\item[(d)] $M((\alpha_{j_r+1,j_r})_{-})=M({}_{-}(\alpha_{i_r,i_r-1}))\ \text{for}\ 1\leq r\leq k$.

\item[(e)] $M((\alpha_{j_r-1,j_r})_{-})=M({}_{-}(\alpha_{i_{r+1},i_{r+1}+1}))\ \text{for}\ 1\leq r\leq k-1$.

\item[(f)] $M((\alpha_{j_k-1,j_k})_{-})=M({}_{-}(\alpha_{n,n-1}))$.

\item[(g)] $M(_{-}(\varepsilon_1))=S_1$ and
$M((\varepsilon_1)_{-})=\left\{
\begin{tabular}{ll} $M(\alpha_{21}^{-1}\cdots\alpha_{i_1,i_1-1}^{-1})$ & if $i_1\not = n$,\\ \\
$M(\alpha_{21}^{-1}\cdots\alpha_{i_1,i_1-1}^{-1} \varepsilon_n^{-1})$ & if $i_1 = n$.
\end{tabular}
\right. $

\vspace{1mm}

\item[(h)] $M((\varepsilon_n)_{-})=S_n$ and
$M(_{-}(\varepsilon_n))=\left\{
\begin{tabular}{ll} $M(\alpha_{j_k+1,j_k}^{-1}\cdots\alpha_{n,n-1}^{-1})$ & if $j_k\not = 1$,\\ \\
$M(\varepsilon_1^{-1}\alpha_{j_k+1,j_k}^{-1}\cdots\alpha_{n,n-1}^{-1})$ & if $j_k = 1$.
\end{tabular}
\right. $

\end{itemize}

By Lemma \ref{lem3.2} (1), cases (a)-(f) provide all the possible minimal string modules of type
$(1, 1)$. So  (3) holds.

By (a)-(h),  $(\varepsilon_1)_{-}$ and $(\varepsilon_n)_{-}$ are both RDE and LIE, but not of the form $_{-}\alpha$ for any $\alpha\in Q_1$. Since $M((\varepsilon_1)_{-})$ and $M((\varepsilon_n)_{-})=S_n$  are not injective, they are minimal string modules of type $(1, 2)$ by Lemma \ref{lem3.2} (2).
Moreover from the computation list, they are the only two such modules. So  (4) is true for $Q$ as in
(Fig. ~1).  Similarly, $M({}_{-}(\varepsilon_1))=S_1$ and $M({}_{-}(\varepsilon_n))$ are the only two minimal string modules of type
$(2, 1)$. So (5) is true for $Q$ as in (Fig.~1).

Case II:   $Q$ is of type (3-2) as in (Fig. 2). Set $j_0=1$ and $j_{k}=n$.
The difference between the two types of $Q$ is that $n$ is a sink in $Q$ of type (3-1), while it is a source in $Q$ of type (3-2). Similar computation shows that (3) and the claim for
$M((\varepsilon_1)_{-})$ and
$M(_{-}(\varepsilon_1))$ are true.
{ The remaining string  that has the right index equal to 1 is $_{-}(\varepsilon_n)=1_{(n,i)}$, which is RIE and LDE, but not of the form $\alpha_{-}$ for any $\alpha\in Q_1$ and $M(_{-}(\varepsilon_n))=S_n$ is not projective; and the remaining string that has left index equal to 1 is
$(\varepsilon_n)_{-}=\alpha_{n-1,n}^{-1}\cdots\alpha_{i_k,i_k+1}^{-1}$, which is RDE and LIE, but not of the form $_{-}\beta$ for any $\beta\in Q_1$ and $M((\varepsilon_n)_{-})$ is not injective. Note that $i_k$ is a sink and $1$ is a source, so $i_k\not=1$.}


So $M((\varepsilon_1)_{-})$ and $M((\varepsilon_n)_{-})$ are  the only two minimal string modules of type $(1, 2)$, and $M(_{-}(\varepsilon_1))$ and  $M(_{-}(\varepsilon_n))$ are the only two minimal string modules of type $(2, 1)$.
Therefore (4) and (5) hold. This completes the proof.
\end{proof}

Denote the Auslander-Reiten translation for $H$ by $\tau$.

\begin{prop}\label{prop3.3}
There are $n-1$  minimal string modules of type (1,1) (up to isomorphism) and they form the $\tau$-orbit at the bottom of
a tube of rank $n-1$. In particular,
for an arrow $\alpha: i\rightarrow j$ in $Q^0$, we have the following, depending on the properties of $i$ and
$j$ in $Q^0$.
\begin{itemize}
\item[(1)]  Both vertices $i$ and $ j$ are non-admissible. Then
\[  \tau S_i=S_j.\]

\item[(2)] The vertex $i$ is non-admissible and $j$ is a sink. Then
\[  \tau S_i=M(w_1),\]
where $w_1$ is the direct path of maximal length terminating at $j$ and satisfying that
$w_1^{-1}\alpha$ is a string. Note that by the definition of $Q$, such a path $w$ uniquely exists.

\item[(3)] The vertex $i$ is a source and $j$ is non-admissible. Then
\[  \tau^{-1} S_j=M(w_2),\]
where $w_2$ is the direct path of maximal length starting from $i$ and statisfying that
$\alpha w_2^{-1}$ is a string. Again, such a path $w$ uniquely exists.

\item[(4)] The vertex $i$ is a source and $j$ is a sink. Then
\[  \tau (M(w_2))=M(w_1),\]
where  $w_1$ and $w_2$ are paths satisfying the conditions on $w_1$ in (2) and the conditions
on $w_2$ in (3), respectively.
\end{itemize}
\end{prop}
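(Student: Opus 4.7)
The plan is to use Proposition~\ref{prop2.1}(3) to identify the minimal modules of type $(1,1)$ with arrows of $Q^0$, then use Proposition~\ref{prop1.1} to compute their AR-translates, and finally assemble them into a single tube. By Proposition~\ref{prop2.1}(3), the minimal string $H$-modules of type $(1,1)$ are exactly the modules $M(\alpha_-)$ for $\alpha$ an arrow of $Q^0$; the case list in the proof of Proposition~\ref{prop2.1} shows these are pairwise non-isomorphic, giving exactly $n-1$ modules. Proposition~\ref{prop1.1} then yields
\[ \tau M(\alpha_-) = M({}_-\alpha), \]
so the entire statement reduces to computing $\alpha_-$ and ${}_-\alpha$ explicitly for each arrow $\alpha: i \to j$ of $Q^0$, distinguishing the four cases based on the orientation at the endpoints.

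The computation uses Butler--Ringel's algorithm. If $i$ is non-admissible in $Q^0$, then at $i$ (which is not $1$ or $n$ and hence carries no loop) the only outgoing arrow is $\alpha$ itself, so $\alpha_-$ is trivial and $M(\alpha_-) = S_i$; dually if $j$ is non-admissible. If $j$ is a sink, constructing ${}_-\alpha$ amounts to walking backward from $j$ using, at each step, the unique available incoming arrow not forbidden by a relation: we start from the other incoming arrow to $j$ (or, when $j=n$, from the loop $\varepsilon_n$ followed by $\alpha$ itself) and continue through non-admissible intermediate vertices. The walk terminates exactly when a source of $Q^0$ is reached---and in case that source is $1$, only after a single traversal of $\varepsilon_1$, since $\varepsilon_1^2=0$ forbids a second traversal. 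The arrows used, read as a direct path, are precisely the path $w_1$ terminating at $j$ described in the statement, so $M({}_-\alpha)=M(w_1^{-1})=M(w_1)$. Combining this with its dual handles all four cases (1)--(4).

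Finally, we assemble the $n-1$ minimal modules into a tube. The rule $M({}_-\alpha)=M((\phi(\alpha))_-)$ defines a permutation $\phi$ of the arrows of $Q^0$, and $\tau$ acts on the minimal modules of type $(1,1)$ via this permutation. Reading $\phi$ off from the four cases---e.g.\ in Case~(1), $\phi(\alpha)$ is the unique outgoing arrow from $t(\alpha)$; in Case~(2), $\phi(\alpha)$ is the arrow at the start of the path $w_1$---one checks that $\phi$ rotates consistently around $Q^0$ and hence is a single $(n-1)$-cycle. Since a module of type $(1,1)$ has exactly one irreducible predecessor and one irreducible successor in $\Gamma_H$, it is quasi-simple; any finite $\tau$-orbit of quasi-simple modules forms the bottom of a stable tube whose rank equals the orbit length, yielding a single tube of rank $n-1$.

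The main obstacle is Cases~(2) and~(4): carrying out the Butler--Ringel extension through interior non-admissible vertices and correctly accounting for the single permitted passage through a boundary loop $\varepsilon_1$ or $\varepsilon_n$ requires careful bookkeeping combining the $A_n$-shape of $Q^0$ with the relations $\varepsilon_1^2=\varepsilon_n^2=0$. A secondary subtlety is verifying that $\phi$ is a single cycle rather than several smaller ones; this relies on the connectedness and linearity of $Q^0$ together with the uniformity of the four-case description.
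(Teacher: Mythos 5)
Your proposal is correct and follows essentially the same route as the paper: identify the type $(1,1)$ minimal modules as $M(\alpha_-)$ for $\alpha\in Q^0_1$ via Proposition \ref{prop2.1}, use Proposition \ref{prop1.1} to get $\tau M(\alpha_-)=M({}_-\alpha)$, and check that the induced permutation of arrows is a single $(n-1)$-cycle. The only presentational difference is that the paper makes your ``$\phi$ rotates consistently around $Q^0$'' step concrete by gluing two copies of $Q^0$ into a cyclic quiver $\tilde{Q}$ via $\varepsilon_1,\varepsilon_n$ and reading the identities $M({}_-\gamma)=M(\beta_-)$ as stepping to the next anti-clockwise arrow, which is exactly the verification you defer.
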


\begin{proof} {
First, the claims in (1)-(4) are true, since the two modules in each case are $M(\alpha_{-})$ and
$M({}_{-}\alpha)$, respectively, for the arrow $\alpha$ in $Q^0$.  By Proposition \ref{prop2.1}, there are
exactly $n-1$ minimal string modules of type $(1, 1)$, one for each arrow in $Q^0$.
Next we prove that the $n-1$  minimal string modules form a $\tau$-orbit.

We connect two copies of $Q^0$ by $\varepsilon_1$ and $\varepsilon_n$, where $\varepsilon_1$ goes
from vertex 1 in the first copy to the 1 in the second copy and $\varepsilon_n$ goes
from vertex $n$ in the second copy to the $n$ in the first copy.
Denote the new quiver by $\tilde{Q}$ (see Example \ref{Ex:doublequiver} for an illustration). Observe that each arrow in $Q^0$ appears twice in $\tilde{Q}$, but in opposite directions, one goes anti-clockwise and the other one goes clockwise. So there are exactly
$n-1$ anti-clockwise arrows in $\tilde{Q}\backslash \{\varepsilon_1, ~\varepsilon_n\}$ and each arrow in $Q^0$ appears exactly once among the $n-1$ anti-clockwise arrows.

The computation (a) - (f) in the proof of Proposition \ref{prop2.1} can be interpreted as follows.
For any anti-clockwise  arrow $\gamma$  in $\tilde{Q}\backslash \{\varepsilon_1, \varepsilon_n\}$,
\[
M({}_{-}\gamma)=M(\beta_{-}),
\]
where $\beta$ is the next anti-clockwise arrow in $\tilde{Q}\backslash \{\varepsilon_1, ~\varepsilon_n\}$ after $\gamma$ when one walks along $\tilde{Q}$ anti-clockwise.
So
\[
\tau(M(\gamma_{-}))=M({}_{-}\gamma)=M(\beta_{-}).
\]
Continuing in this fashion, the $\tau$-orbit reaches all the $n-1$ minimal string modules of type $(1, 1)$ and stays
within these modules. Therefore the $n-1$ minimal string modules of type $(1, 1)$
form a $\tau$-orbit, and the $\tau$-orbit is at the bottom of the tube, because  these string modules
 are all of type $(1, 1)$. This completes the proof.}
\end{proof}

\begin{example}\label{Ex:doublequiver}
(1) {  Let $Q$ be the quiver: $\varepsilon_1 \circlearrowleft  1\xrightarrow{\alpha_{21}} 2 \xrightarrow{\alpha_{32}} 3 \xrightarrow{\alpha_{43}} 4\circlearrowleft \varepsilon_4$.
The quiver $\tilde{Q}$ constructed in the proof of Theorem \ref{prop3.3} is as follows, where the first copy of $Q$ is at the bottom,
$$
\xymatrix{
 1\ar[r] &2 \ar[r] & 3 \ar[r] &4\ar[d]^{\varepsilon_4} \\
1\ar[r] \ar[u]^{\varepsilon_1}&2 \ar[r] & 3 \ar[r] & 4
}
$$
and the modules at the bottom of the tube of rank 3 are
\[
\xymatrix{ M(\varepsilon_4 \alpha_{43} \alpha_{32}\alpha_{21}\varepsilon_1)
& S_3 &S_2 & M(\varepsilon_4 \alpha_{43} \alpha_{32}\alpha_{21}\varepsilon_1). }
\]
In the same order, these modules are
\[
\xymatrix{ M((\alpha_{21})_{-})
& M((\alpha_{43})_{-}) &M((\alpha_{32})_{-}) & M((\alpha_{21})_{-}). }
\]

\vspace{1mm}

(2)  Let $Q$ be the quiver: $\varepsilon_1 \circlearrowleft  1\xrightarrow{\alpha_{21}} 2 \xrightarrow{\alpha_{32}} 3 \xleftarrow{\alpha_{34}} 4\xrightarrow{\alpha_{54}} 5\circlearrowleft \varepsilon_5$.
Then the quiver $\tilde{Q}$ is
$$
\xymatrix{
 1\ar[r] &2 \ar[r] & 3  &4\ar[l]\ar[r]& 5\ar[d]^{\varepsilon_5}\\
1\ar[r] \ar[u]^{\varepsilon_1}&2 \ar[r] & 3 & 4\ar[l] \ar[r]& 5
}
$$
and the modules at the bottom of the tube of rank 4 are:
\[
\xymatrix{ M( \alpha_{32}\alpha_{21}\varepsilon_1)  & M(\varepsilon_5\alpha_{54})
& M(\alpha_{34})  &S_2 & M( \alpha_{32}\alpha_{21}\varepsilon_1). }
\]
In the same order, these modules are
\[
\xymatrix{ M((\alpha_{21})_{-}) & M((\alpha_{34})_{-})
& M((\alpha_{54})_{-}) &M((\alpha_{32})_{-}) & M((\alpha_{21})_{-}). }
\]
Each arrow in $Q^0$ appears exactly once.

\vspace{1mm}

(3) Let $Q$ be the quiver: $\varepsilon_1 \circlearrowleft  1\xrightarrow{\alpha_{21}} 2 \xrightarrow{\alpha_{32}} 3 \xleftarrow{\alpha_{34}} 4\circlearrowleft \varepsilon_4$.
Then the quiver $\tilde{Q}$ is
$$
\xymatrix{
 1\ar[r] &2 \ar[r] & 3  &4\ar[l]\ar[d]_{\varepsilon_4}
\\
1\ar[r] \ar[u]^{\varepsilon_1}&2 \ar[r] & 3 & 4\ar[l] }
$$
and the modules at the bottom of the tube of rank 4 are:
\[
\xymatrix{ M( \alpha_{32}\alpha_{21}\varepsilon_1)
& M(\alpha_{34} \varepsilon_4)  &S_2 & M( \alpha_{32}\alpha_{21}\varepsilon_1). }
\]}
In the same order, these modules are
\[
\xymatrix{ M((\alpha_{21})_{-})
& M((\alpha_{34})_{-}) &M((\alpha_{32})_{-}) & M((\alpha_{21})_{-}). }
\]

Note that in each case,  a module is the $\tau$-translation of the module on its immediate right.
\end{example}

\begin{lem}\label{lem:3.11}
Let $w$ be a string.
\begin{itemize}
\item[(1)] If $w$ is RDE, then so is $w_h$. Thus there exists the following infinite ray:
$$M(w\rightarrow w_{h^\bullet}):\ \ M(w)\rightarrow M(w_h)\rightarrow M(w_{h^{2}})\rightarrow\cdots$$
where $w_{h^i}=(w_{h^{i-1}})_h$ for $i\geq 2$.

\item[(2)] If $w$ is LIE, then so is $_hw$. Thus there exists the following infinite ray:
$$M(w\rightarrow {}_{h^\bullet}w):\ \ M(w)\rightarrow M(_hw)\rightarrow M(_{h^{2}}w)\rightarrow\cdots$$
where $_{h^i}w={}_h({}_{h^{i-1}}w)$ for $i\geq 2$.

\item[(3)] If $w$ is RIE, then so is $w_c$. Thus there exists the following infinite coray:
$$M(w_{c^\bullet}\rightarrow w): \ \ \cdots \rightarrow M(w_{c^{2}})\rightarrow M(w_c)\rightarrow M(w)$$
where $w_{c^i}=(w_{c^{i-1}})_c$  for $i\geq 2$.

\item[(4)] If $w$ is LDE, then so is $_cw$. Thus there exists the following infinite coray:
$$M({}_{c^\bullet}w\rightarrow w):\ \ \cdots\rightarrow M(_{c^{2}}w)\rightarrow M(_cw)\rightarrow M(w)$$
where $_{c^i}w={}_c({}_{c^{i-1}}w)$  for $i\geq 2$.
\end{itemize}
Moreover, the map at each step in the rays and corays is irreducible.
\end{lem}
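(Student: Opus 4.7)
I will prove (1) in detail; statements (2)--(4) follow by analogous arguments after a left-right reflection or a hook-cohook swap, and the irreducibility of each map in the resulting rays and corays is then immediate from Propositions~\ref{prop1.1} and~\ref{prop1.2}, since the canonical inclusions $i$ and projections $p$ that appear in the diagrams are precisely the maps arising in the corresponding AR-sequences.

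Suppose $w$ is RDE; fix an arrow $\alpha$ with $w\alpha$ a string, and write $w_h = w\cdot\alpha\cdot\alpha_{-}$ with $\alpha_{-} = \beta_1^{-1}\cdots\beta_r^{-1}$ the maximal inverse tail. Setting $v := s(w_h)$, my goal is to exhibit a direct arrow $\delta$ with $t(\delta) = v$ such that $w_h\delta$ is again a string. I proceed by a case analysis on whether $r \geq 1$ or $r = 0$, and on the location of $v$, exploiting the crucial fact that $I = \langle\varepsilon_1^2, \varepsilon_n^2\rangle$ contains no length-two path other than $\varepsilon_1^2$ and $\varepsilon_n^2$. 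If $r \geq 1$, so $v = t(\beta_r)$, then maximality of $\alpha_{-}$ forces every arrow $\gamma$ with $s(\gamma) = v$ to satisfy $\gamma\beta_r \in I$, whence $\gamma = \beta_r \in \{\varepsilon_1, \varepsilon_n\}$; consequently either $v$ is an internal sink of $Q^0$ (so its second incoming arrow supplies $\delta$), or $v\in\{1,n\}$ with $\beta_r = \varepsilon_v$ and the adjacent non-loop arrow at $v$ pointing into $v$, which again yields $\delta$. If $r = 0$, then $v = s(\alpha)$ and triviality of $\alpha_{-}$ forces $\alpha$ to be the unique outgoing arrow at $v$; this makes $v$ either a middle vertex of $Q^0$ with a unique incoming $\delta \neq \alpha$, or a boundary vertex with $\alpha = \varepsilon_v$ and $\delta$ the non-loop incoming arrow.

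In each subcase one checks that $\delta \neq \beta_r$ (respectively $\delta \neq \alpha$), so no $cc^{-1}$-violation arises, and that the only newly-created length-two direct subword of $w_h\delta$ is either trivial (when the preceding character is inverse) or of the form $\varepsilon_v\delta$ with $\delta \neq \varepsilon_v$; neither is an $\varepsilon_i^2$, so $w_h\delta$ is a string and $w_h$ is RDE. Parts (2)--(4) are entirely parallel: in (3), for instance, the maximality of $(\beta^{-1})_{+}$ together with the same analysis of $I$ forces $s(w_c)$ to be either a source of $Q^0$ or a boundary vertex with an outgoing loop, again guaranteeing a suitable arrow whose inverse may be appended. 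The main obstacle is precisely this maximality analysis, that is, translating the maximality of $\alpha_{-}$ (and the analogous maximal extensions $(\beta^{-1})_{+}$, ${}_{+}(\alpha^{-1})$, ${}_{-}\beta$ appearing in (2)--(4)) into sharp constraints on the local quiver at $v$; the delicate point is that the very special shape of $I$ in type $\widetilde{C}_{n-1}$, where the only relations are the squares of the two boundary loops, is exactly what ensures these constraints always leave room for the required extension.
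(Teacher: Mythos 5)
Your proof is correct and follows essentially the same route as the paper's: a direct case analysis on the terminal vertex of $w_h$ (resp.\ $w_c$), using the maximality of the appended tail $\alpha_{-}$ (resp.\ $(\beta^{-1})_{+}$) together with the fact that the only relations are $\varepsilon_1^2$ and $\varepsilon_n^2$ to produce the next extending arrow, and then quoting Propositions~\ref{prop1.1} and~\ref{prop1.2} for irreducibility. If anything, your case division (on $r=0$ versus $r\geq 1$ and on whether the end vertex is internal or a boundary vertex carrying a loop) is more exhaustive than the paper's, which only records the cases where $s(\alpha)$ is non-admissible or a source in $Q^0$; the one negligible imprecision is that in the $r=0$ internal-vertex subcase the new length-two direct subword is $\alpha\delta$ with both letters non-loop arrows rather than of the form $\varepsilon_v\delta$, but such a path is likewise never in $I$, so your conclusion stands.
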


\begin{proof}
First by the construction of the AR-sequences in Propositions \ref{prop1.1} and \ref{prop1.2},
the map at each step in the rays and corays is part of an AR-sequence and so it is irreducible.

(1) Suppose that $w$ is RDE. Then there exists $\alpha\in Q_1$ such that $w\alpha$ is a string. We have
\[w_h=\left\{
\begin{tabular}{ll} $w\alpha$ & if  $s(\alpha)$ is non-admissible in $Q^0$, \\ \\

$w\alpha w'$ & if $s(\alpha)$ is a source in $Q^0$
\end{tabular}
\right. \]
where $w\alpha w'$ is a string, $w'$ is an inverse string of maximal length. In particular,
$s(w')$ is a sink in $Q^0$.
In either case,  $w_h$ is again RDE. This proves (1). Similarly, (2) is true.

(3) Suppose that $w$ is RIE. Then there exists $\alpha \in Q_1$ such that $w\alpha^{-1}$ is a string.
Similar to (1), we have
\[w_c=\left\{
\begin{tabular}{ll} $w\alpha^{-1}$ & if  $t(\alpha)$ is non-admissible in $Q^0$, \\ \\

$w\alpha^{-1} w'$ & if $t(\alpha)$ is a sink in $Q^0$
\end{tabular}
\right. \]
where $w\alpha^{-1} w'$ is a string, $w'$ is a direct string of maximal length. In particular,
$s(w')$ is a source in $Q^0$.
In either case,  $w_c$ is again RIE. This proves (3). Similarly, (4) is true.
 \end{proof}

\begin{rem} Lemma \ref{lem:3.11} is not true in general. For instance, the linear quiver of type $A_n$ is a string algebra, but there is no infinite ray or coray.
\end{rem}

\begin{prop}\label{prop2.2}
There is a bijection between isomorphism classes of minimal string modules of type (2,2) and connected components of $\Gamma_H$ of type $\mathbb{Z}A_\infty^\infty$.
\end{prop}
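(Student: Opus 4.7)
The plan is to establish the bijection $[M(w)]\mapsto\mathcal{T}_{M(w)}$, where $\mathcal{T}_{M(w)}$ is the connected component of $\Gamma_H$ containing $M(w)$, in three stages: well-definedness (the target is of type $\mathbb{Z}A_\infty^\infty$), injectivity, and surjectivity.

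For well-definedness, I would start with $M(w)$ minimal of type $(2,2)$. By Lemma \ref{lem3.2}(4), $w$ is simultaneously RDE, RIE, LDE and LIE, and $M(w)$ is neither projective nor injective, so Proposition \ref{prop1.2}(1) gives the AR-sequence
$$0\to M(w)\to M({}_hw)\oplus M(w_h)\to M({}_hw_h)\to 0,$$
yielding $\tau^{-1}M(w)=M({}_hw_h)$. Dually, ${}_cw_c$ is neither RDE nor LIE (its left end is a deep and its right end is a peak), so Proposition \ref{prop1.2}(4) applied to ${}_cw_c$ gives $\tau M(w)=M({}_cw_c)$. One checks that ${}_hw_h$ and ${}_cw_c$ inherit the four extension properties from $w$, so iterating produces $\tau^{-k}M(w)=M({}_{h^k}w_{h^k})$ and $\tau^k M(w)=M({}_{c^k}w_{c^k})$ for every $k\geq 0$. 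Because each hook or cohook addition strictly lengthens the underlying string, these modules are pairwise non-isomorphic, so the $\tau$-orbit of $M(w)$ is bi-infinite and $\mathcal{T}_{M(w)}$ contains no projective or injective and is not a tube. Combining this with the four rays and corays at $M(w)$ from Lemma \ref{lem:3.11}, and applying Propositions \ref{prop1.1} and \ref{prop1.2} at each vertex throughout, $\mathcal{T}_{M(w)}$ acquires a doubly-infinite grid structure in which every vertex has exactly two incoming and two outgoing irreducible maps, i.e.\ it is the translation quiver $\mathbb{Z}A_\infty^\infty$.

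For injectivity I would examine the four irreducible maps at each indecomposable in $\mathcal{T}_{M(w)}$ and show $M(w)$ is the only minimal string module of type $(2,2)$ up to isomorphism. At $M(w)$ both outgoing maps are hook-embeddings (injections) and both incoming maps are cohook-projections (surjections), confirming minimality. At $\tau^kM(w)$ with $k\geq 1$ both outgoing maps are cohook-projections, violating the outgoing-injection condition; at $\tau^kM(w)$ with $k\leq -1$ both incoming maps are hook-embeddings, violating the incoming-surjection condition. For modules off the $\tau$-orbit of $M(w)$, the interplay of hook-rays and cohook-corays through that module forces the four adjacent irreducible maps to contain both an incoming injection and an outgoing surjection, so minimality again fails. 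For surjectivity, let $\mathcal{T}$ be any component of $\Gamma_H$ of type $\mathbb{Z}A_\infty^\infty$. Theorem \ref{thm2.1} places band modules in homogeneous tubes, so every indecomposable in $\mathcal{T}$ is a string module, and Proposition \ref{prop3.0} supplies a minimal string module $M(w)\in\mathcal{T}$. Among the types listed in Lemma \ref{rem3.1}(1), types $(0,2)$ and $(2,0)$ are simple projectives or injectives which cannot appear in a $\mathbb{Z}A_\infty^\infty$-component; type $(1,1)$ lies at the bottom of the rank $n-1$ tube by Proposition \ref{prop3.3}; and types $(1,2)$ and $(2,1)$ have only three adjacent irreducible maps, incompatible with the $4$-valent mesh structure of $\mathbb{Z}A_\infty^\infty$. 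Hence $M(w)$ must be of type $(2,2)$.

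The hardest part will be verifying rigorously that iterated application of Proposition \ref{prop1.2} truly sweeps out $\mathbb{Z}A_\infty^\infty$ rather than wrapping onto itself to form a tube. This boils down to a careful bookkeeping: tracking which of the four extension properties RDE, RIE, LDE, LIE hold at each end of each newly produced string, certifying which case of Proposition \ref{prop1.2} applies at the next step, and confirming the strict length increase that excludes any periodic $\tau$-orbit. With this bookkeeping in place, the systematic labelling of irreducible maps as hook-embeddings versus cohook-projections makes the minimality analysis at each vertex of $\mathcal{T}_{M(w)}$ routine, and the remaining arguments reduce to combinatorics on the translation quiver.
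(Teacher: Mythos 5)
Your proposal is correct and follows essentially the same route as the paper: Proposition \ref{prop3.0} supplies a minimal string module in each $\mathbb{Z}A_\infty^\infty$-component, which must then be of type $(2,2)$, and conversely a minimal type-$(2,2)$ module $M(w)$ generates a component cut into four regions by the rays and corays of Lemma \ref{lem:3.11}, in which the four cases of Proposition \ref{prop1.2} propagate and $M(w)$ is the unique minimal (indeed dimension-minimal) module, giving the bijection. One small misstatement: ${}_hw_h$ does \emph{not} inherit all four extension properties (by maximality of the added hooks it is never RIE or LDE, just as ${}_cw_c$ is neither RDE nor LIE), but only the persistence of RDE and LIE under adding hooks --- which is exactly Lemma \ref{lem:3.11} --- is needed, so your iteration goes through unchanged.
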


\begin{proof}
Assume that $\mathcal{T}$ is a connected  component of $\Gamma_H$ of type $\mathbb{Z}A_\infty^\infty$. By Proposition \ref{prop3.0}, there  is a minimal  string module $M(w)$
occurring in $\mathcal{T}$ and so $M(w)$ is of type (2,2).

Conversely, assume that $M(w)$ is a minimal string module of type (2,2). Then the AR sequences containing $M(w)$ are as follows:
$$\xymatrixcolsep{0.8pc}\xymatrixrowsep{0.8pc}\xymatrix{
&& M(_hw_c) \ar[rd]^{-p}& & \\
& M(w_c)\ar[ru]^{i}\ar[rd]^{p} & & M(_hw)\ar[rd]^{-i}\\
M(_cw_c)\ar[ru]^{p}\ar[rd]^{-p}& & M(w)\ar[ru]^{i}\ar[rd]^{i} & & M(_hw_h)\\
&M(_cw)\ar[ru]^{p}\ar[rd]^{-i}&& M(w_h)\ar[ru]^{i}\\
& & M(_cw_h) \ar[ru]^{p}& &
}$$
 The connected  component $\mathcal{T}_w$ containing $M(w)$ is divided into four regions as follows by the  two rays
$M(w\rightarrow w_{h^\bullet}),M(w\rightarrow{}_{h^\bullet}w)$
and the two corays
$M(w_{c^\bullet}\rightarrow w), M({}_{c^\bullet}w\rightarrow w), $
$$\xymatrixcolsep{0.8pc}\xymatrixrowsep{0.8pc}\xymatrix{
\ddots\ar[rd]^{p}&&&&&& \vdots \\
&M(w_{c^2})\ar[rd]^{p} &  & (III) & & M(_{h^2}w)\ar[ru]^{i} & \\
&&M(w_c)\ar[rd]^{p} & & M(_hw)\ar[ru]^{i} & &  \\
&(IV)& &   M(w)\ar[ru]^{i}\ar[rd]^{i} &  &(I) & \\
&& M(_cw)\ar[ru]^{p} & & M(w_h)\ar[rd]^{i}&&\\
&M(_{c^2}w)\ar[ru]^{p} && (II) && M(w_{h^2})\ar[rd]^{i} & \\
\vdots\ar[ru]^{p}&&&&&&\ddots
}$$
By induction, we see that the AR-sequences in region (I), (II), (III) and (IV) are those in Proposition \ref{prop1.2} (1), (2), (3) and (4),   respectively.
In particular, $\textup{dim}M(w)<\textup{dim}M$ for any $M\in \mathcal{T}_w\backslash \{M(w)\}$ and
so minimal string modules of type (2, 2) (up to isomorphism) are in one to one correspondence with
components of type $\mathbb{Z}A_\infty^\infty$.
\end{proof}

\begin{rem}
Geiss \cite{[Gei]} describes modules of minimal dimension in a component of type
$\mathbb{Z}A_\infty^\infty$. Our  minimal string modules are defined differently
(see Definition \ref{Def:minstmod}) and
are defined for any component.
The proof of Proposition \ref{prop2.2} shows that for any minimal string $w$ of type $(2, 2)$,
$\textup{dim}M(w)<\textup{dim}M$ for any $M\in \mathcal{T}_w\backslash \{M(w)\}$. Therefore the minimal string
modules $M(w)$ of type $(2, 2)$ coincide with those described in
\cite[Proposition 3]{[Gei]}.
\end{rem}

{ Denote by $\mathcal{S}$ a complete set of representatives of simple $K[T, T^{-1}]$-modules.}

\begin{thm}\label{thm3.1} 
The AR-quiver $\Gamma_H$ of $H$ consists of the following:

\begin{itemize}
\item[(1)] one component $\mathcal{T}_{PI}$ containing all the indecomposable preprojective modules and all the indecomposable preinjective modules (up to isomorphism).

\item[(2)] { one tube of rank $n-1$, where the sum of the dimension vectors of the indecomposable modules at the bottom of the tube is $d=(d_i)$ with $d_i=2$ for all $i$.
(Note: we will see later if we take the sum of the rank vectors instead, then the sum is exactly $\delta$, the minimal positive imaginary root of type $C$). }

\item[(3)] homogeneous tubes $\mathcal{H}_{w,S}$, where $w\in\overline{\mathrm{Ba}}(H)$  and
$S\in \mathcal{S}$ is a simple module of the Laurent polynomial ring $K[T,T^{-1}]$.

\item[(4)]components $\mathcal{T}_\lambda$ of type $\mathbb{Z}A_\infty^\infty$,  where $\lambda$ runs through all the isomorphism classes of minimal string modules of type (2,2).
\end{itemize}
\end{thm}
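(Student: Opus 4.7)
The plan is to classify the components of $\Gamma_H$ using the Butler-Ringel description of indecomposables and AR-sequences (Theorems~\ref{BRThm} and~\ref{thm2.1}), together with Proposition~\ref{prop3.0}, which ensures that every component containing a string module contains a minimal one. By Theorem~\ref{thm2.1}, for each pair $(w,S)$ with $w\in\overline{\textup{Ba}}(H)$ and $S\in\mathcal{S}$, the band modules $M(w,s,\varphi)$ whose composition series uses $S$ form a homogeneous tube $\mathcal{H}_{w,S}$; distinct pairs give distinct tubes, and AR-sequences in these tubes never leave the set of band modules, so these tubes are disjoint from every component containing a string module. This establishes~(3).

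For the remaining (string) components, each contains at least one minimal string module, which by Proposition~\ref{prop2.1} falls into one of six types. Type~$(2,2)$ yields exactly the components of type $\mathbb{Z}A_\infty^\infty$ in~(4) by Proposition~\ref{prop2.2}. Type~$(1,1)$ yields part~(2): Proposition~\ref{prop3.3} shows that the $n-1$ minimal modules of this type form a single $\tau$-orbit at the mouth of a tube of rank $n-1$. To verify the dimension-vector claim, I observe that the walks $\alpha_-$ for $\alpha\in Q^0$, as described in the proof of Proposition~\ref{prop3.3}, concatenate (up to cyclic rotation) into a closed anti-clockwise walk around the doubled quiver $\tilde{Q}$, and each vertex of $Q$ appears exactly twice in $\tilde{Q}$; hence the sum of the dimension vectors of the mouth modules is $(2,2,\ldots,2)$.

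It remains to handle the minimal strings of types $(0,2), (2,0), (1,2), (2,1)$. By Proposition~\ref{prop2.1}, these are exactly the simple projectives and injectives at the interior sinks and sources, together with the four modules $M((\varepsilon_1)_-), M((\varepsilon_n)_-), M({}_{-}(\varepsilon_1)), M({}_{-}(\varepsilon_n))$ at the loop vertices. For each such minimal, Lemma~\ref{lem:3.11} produces an infinite ray and/or coray emanating from it, and a case-by-case application of Proposition~\ref{prop1.2} shows that the resulting AR-sequences weave these rays and corays into a single component, which we call $\mathcal{T}_{PI}$. One further checks that $\mathcal{T}_{PI}$ contains every indecomposable projective $P_i$ (via the canonical irreducible maps coming from $\mathrm{rad}(P_i)\hookrightarrow P_i$) and dually every indecomposable injective, hence also every preprojective and preinjective module. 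The four families are then pairwise disjoint, since each component is uniquely determined by its minimal string (or, in the band case, by the pair $(w,S)$).

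The main obstacle is showing that the preprojective and preinjective orbits actually \emph{merge} into the single component $\mathcal{T}_{PI}$ --- this is the new phenomenon caused by the nilpotent loops $\varepsilon_1^2=\varepsilon_n^2=0$ that makes $H$ non-hereditary and distinguishes its AR-quiver from that of a tame hereditary algebra of type $\widetilde{C}_{n-1}$. I plan to verify this by iterating $\tau^{-1}$ on the indecomposable projectives at the loop vertices, using Proposition~\ref{prop1.2} to compute AR-sequences explicitly. The two orientations~(Fig.~1) and~(Fig.~2) need parallel treatment; tracking the strings produced by successive $\tau^{-1}$-translations should show that after finitely many steps one reaches an indecomposable injective, unifying the preprojective and preinjective components into $\mathcal{T}_{PI}$.
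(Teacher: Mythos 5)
Your overall decomposition (band modules give the homogeneous tubes; every string component contains a minimal string module; type $(1,1)$ minimals give the rank-$(n-1)$ tube; type $(2,2)$ minimals give the $\mathbb{Z}A_\infty^\infty$ components; the remaining minimals all live in one component $\mathcal{T}_{PI}$) matches the paper's, and parts (2)--(4) of your argument are essentially the paper's proof. The gap is in the step you yourself identify as the main obstacle, and the plan you offer for it does not work.

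You propose to connect the projective and injective slices by ``iterating $\tau^{-1}$ on the indecomposable projectives at the loop vertices'' until ``one reaches an indecomposable injective.'' This is directionally wrong: the modules $\tau^{-k}P_i$ for $k\geq 0$ are by definition the preprojective modules, and in $\mathcal{T}_{PI}$ they form the half of the component lying on the far side of the projective slice from the injectives (see Fig.~3 and Fig.~4 of the paper). No $\tau^{-k}$-translate of a projective is ever injective here, so the iteration you describe never terminates at an injective and never crosses the gap. The actual bridge in the paper is \emph{not} obtained from Proposition \ref{prop1.2} at all, but from Proposition \ref{prop1.1}, the Butler--Ringel AR-sequences with \emph{indecomposable} middle term, applied to the two loops: for $\alpha=\varepsilon_1$ (and dually $\varepsilon_n$) one has the AR-sequence
\[
0 \rightarrow M({}_{-}\varepsilon_1)\rightarrow M({}_{-}\varepsilon_1\cdot \varepsilon_1 \cdot(\varepsilon_1)_{-})
\rightarrow M((\varepsilon_1)_{-})\rightarrow 0 ,
\]
i.e. $\tau M((\varepsilon_1)_{-})=M({}_{-}\varepsilon_1)$. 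Combined with the radical/socle computations $\rad P_1=P_2\oplus M((\varepsilon_1)_{-})$ and $I_1/\soc I_1=M({}_{-}\varepsilon_1)=S_1$, this exhibits a path of irreducible maps $I_1\to S_1\to N_1\to M((\varepsilon_1)_{-})\to P_1$, which is what merges the two slices. In other words, the correct move is to apply $\tau$ (one single step) to the non-projective radical summand of $P_1$, not $\tau^{-1}$ repeatedly to $P_1$ itself. Your proposal never invokes Proposition \ref{prop1.1}, which is the essential ingredient for part (1). A smaller point: your disjointness argument ``each component is uniquely determined by its minimal string'' is not quite right, since $\mathcal{T}_{PI}$ contains many minimal string modules; disjointness instead follows because the \emph{types} of minimals occurring in the components of (1), (2) and (4) are different, together with the classification in Proposition \ref{prop2.1}.
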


\newcommand{\soc}{\operatorname{soc}}
\newcommand{\rad}{\operatorname{rad}}

\begin{proof}
Recall that for any indecomposable projective module $P$ and any indecomposable  injective module $I$, the natural embedding $\textup{rad} P \rightarrow P$ and the natural projection
$I\rightarrow I/\textup{soc}I$ are almost  split maps, see \cite{[AR]} for more details. We compute radicals of the indecomposable projective modules and quotients by socles of the indecomposable injective modules.
{By duality, we may
assume that $Q$ is of type (3-1) or (3-2) as in (Fig.~1) or (Fig.~2).}
\begin{itemize}
\item[(1)] $\rad P_1=P_2\oplus M((\varepsilon_1)_{-})$.
\item[(2)] For $1<i<n$:
$\rad P_i=\left\{ \begin{tabular}{ll} $P_j$ & if $i$ is not a source and $i\rightarrow j$,\\
$P_{i-1}\oplus P_{i+1}$  & if $i$ is a source. \end{tabular}\right.$
\item[(3)] $\rad P_{n}=\left\{ \begin{tabular}{ll} $S_n=M((\varepsilon_n)_{-})$
& if  $Q$ is of type (3-1), \\
$P_{n-1}\oplus M((\varepsilon_n)_{-})$ & if $Q$ is of type (3-2).
   \end{tabular}\right. $
\item[(4)] $I_1/\soc I_1=S_1=M(_{-}(\varepsilon_1))$.

\item[(5)] For $1<i<n$: $I_i/\soc I_i =\left\{ \begin{tabular}{ll} $I_j$ & if $i$ is not a sink and
$j\rightarrow i$, \\ $I_{i-1}\oplus I_{i+1}$ & if $i$ is a sink. \end{tabular}\right.$

\item[(6)] $I_n/\soc I_n=\left\{ \begin{tabular}{ll} $I_{n-1}\oplus M({}_{-}(\varepsilon_n))$  & if  $Q$ is of type (3-1), \\
$S_n=M({}_{-}(\varepsilon_n))$ & if $Q$ is of type (3-2). \end{tabular}\right.$
\end{itemize}
So in the AR-quiver, the indecomposable projective modules are in one slice, connected by irreducible maps and the same for the indecomposable injective modules.
Note that the orientation of the arrow between vertices $n-1$ and $n$ are different in the two quivers (Fig.~1) and (Fig.~2) and so strings $(\varepsilon_n)_{-}$ and ${}_{-}(\varepsilon_n)$
are different for the two quivers. However, in either case,
the AR-sequences
\[
0 \rightarrow M({}_{-}(\varepsilon_1))\rightarrow M({}_{-}(\varepsilon_1) \varepsilon_1 (\varepsilon_1)_{-})
\rightarrow M((\varepsilon_1)_{-})\rightarrow 0
\]
and
\[
0 \rightarrow M({}_{-}(\varepsilon_n))\rightarrow M({}_{-}(\varepsilon_n) \varepsilon_n (\varepsilon_n)_{-})
\rightarrow M((\varepsilon_n)_{-})\rightarrow 0
\]
connect the slice of injective modules and the slice of projective modules in the AR-quiver. In particular, the indecomposable projective and the indecomposable injective modules are in one component, denoted by
$\mathcal{T}_{PI}$, and this component contains all the
minimal string modules of type (0,2), (2,0), (1,2) and (2,1).  See (Fig.~3) and (Fig.~4),
 where
$M_1=M((\varepsilon_1)_-)$, $N_1=M({}_{-}(\varepsilon_1) \varepsilon_1 (\varepsilon_1)_{-})$,
$N_n=M({}_{-}(\varepsilon_n) \varepsilon_n (\varepsilon_n)_{-})$,
 $M_n=M({}_{-}(\varepsilon_n))$  and
 $M'_n=M((\varepsilon_n)_{-})$.

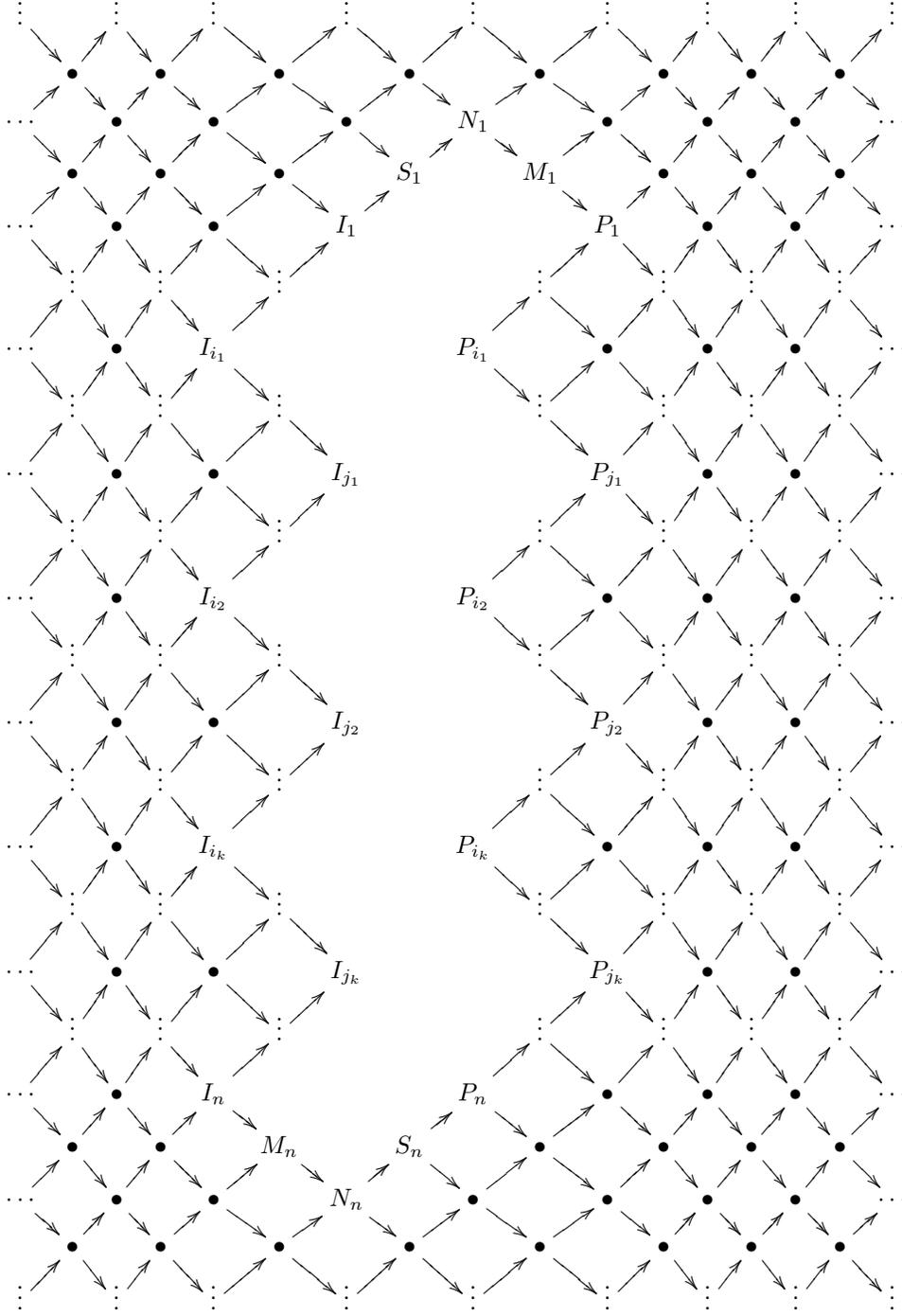
\begin{figure}
$$\xymatrixcolsep{0.55pc}\xymatrixrowsep{0.55pc}\xymatrix{
{\vdots}\ar[rd]&&{\vdots}\ar[rd]&&{\vdots}\ar[rd] && {\vdots}\ar[rd] &&{\vdots}\ar[rd] &&{\vdots}\ar[rd] &&{\vdots}\ar[rd] &&{\vdots}\ar[rd] &&{\vdots}\\
&\bullet\ar[ru]\ar[rd] && \bullet\ar[ru]\ar[rd] &&\bullet\ar[ru]\ar[rd]&&\bullet\ar[ru]\ar[rd]&&\bullet\ar[rd]\ar[ru]&&\bullet\ar[rd]\ar[ru]&&\bullet\ar[rd]\ar[ru]&&\bullet\ar[rd]\ar[ru] &&\\
{\cdots}\ar[ru]\ar[rd] &&\bullet \ar[ru]\ar[rd] &&\bullet\ar[ru]\ar[rd]&&\bullet\ar[ru]\ar[rd]&& N_1 \ar[rd]\ar[ru]&&\bullet\ar[rd]\ar[ru]&& \bullet\ar[rd]\ar[ru]&&\bullet\ar[rd]\ar[ru]&&{\cdots}\\
&\bullet\ar[ru]\ar[rd]&&\bullet\ar[ru]\ar[rd] &&\bullet\ar[ru]\ar[rd]&&S_1\ar[ru]&&M_1\ar[rd]\ar[ru]&&\bullet\ar[rd]\ar[ru]&&\bullet\ar[rd]\ar[ru]&&\bullet\ar[rd]\ar[ru]&& \\
{\cdots}\ar[ru]\ar[rd] &&\bullet \ar[ru]\ar[rd] &&\bullet\ar[ru]\ar[rd]&& I_1\ar[ru]&& && P_1\ar[rd]\ar[ru]&& \bullet\ar[rd]\ar[ru]&&\bullet\ar[rd]\ar[ru]&&{\cdots}\\
&\vdots\ar[ru]\ar[rd]&&\vdots\ar[ru]\ar[rd] &&\vdots\ar[ru]&&&&\vdots\ar[rd]\ar[ru]&&\vdots\ar[rd]\ar[ru]&&\vdots\ar[rd]\ar[ru]&&\vdots\ar[rd]\ar[ru]&& \\
{\cdots}\ar[ru]\ar[rd] &&\bullet \ar[ru]\ar[rd] &&I_{i_1}\ar[ru]\ar[rd]&&&&P_{i_1}\ar[ru]\ar[rd] &&\bullet\ar[rd]\ar[ru]&& \bullet\ar[rd]\ar[ru]&&\bullet\ar[rd]\ar[ru]&&{\cdots}\\
&\vdots\ar[rd]\ar[ru]&&\vdots\ar[ru]\ar[rd] &&\vdots\ar[rd]&&&&\vdots\ar[rd]\ar[ru]&&\vdots\ar[rd]\ar[ru]&&\vdots\ar[rd]\ar[ru]&&\vdots\ar[rd]\ar[ru]&& \\
{\cdots}\ar[ru]\ar[rd] &&\bullet \ar[ru]\ar[rd] &&\bullet\ar[ru]\ar[rd]&& I_{j_1}&& && P_{j_1}\ar[rd]\ar[ru]&& \bullet\ar[rd]\ar[ru]&&\bullet\ar[rd]\ar[ru]&&{\cdots}\\
&\vdots\ar[ru]\ar[rd]&&\vdots\ar[ru]\ar[rd] &&\vdots\ar[ru]&&&&\vdots\ar[rd]\ar[ru]&&\vdots\ar[rd]\ar[ru]&&\vdots\ar[rd]\ar[ru]&&\vdots\ar[rd]\ar[ru]&& \\
{\cdots}\ar[ru]\ar[rd] &&\bullet \ar[ru]\ar[rd] &&I_{i_2}\ar[ru]\ar[rd]&& && P_{i_2}\ar[rd]\ar[ru] && \bullet\ar[rd]\ar[ru]&& \bullet\ar[rd]\ar[ru]&&\bullet\ar[rd]\ar[ru]&&{\cdots}\\
&\vdots\ar[ru]\ar[rd]&&\vdots\ar[ru]\ar[rd] &&\vdots\ar[rd]&&&&\vdots\ar[rd]\ar[ru]&&\vdots\ar[rd]\ar[ru]&&\vdots\ar[rd]\ar[ru]&&\vdots\ar[rd]\ar[ru]&& \\
{\cdots}\ar[ru]\ar[rd] &&\bullet \ar[ru]\ar[rd] &&\bullet\ar[ru]\ar[rd]&& I_{j_2}&& && P_{j_2}\ar[rd]\ar[ru]&& \bullet\ar[rd]\ar[ru]&&\bullet\ar[rd]\ar[ru]&&{\cdots}\\
&\vdots\ar[ru]\ar[rd]&&\vdots\ar[ru]\ar[rd] &&\vdots\ar[ru]&&&&\vdots\ar[rd]\ar[ru]&&\vdots\ar[rd]\ar[ru]&&\vdots\ar[rd]\ar[ru]&&\vdots\ar[rd]\ar[ru]&& \\
{\cdots}\ar[ru]\ar[rd] &&\bullet \ar[ru]\ar[rd] &&I_{i_k}\ar[ru]\ar[rd]&& && P_{i_k}\ar[rd]\ar[ru] && \bullet\ar[rd]\ar[ru]&& \bullet\ar[rd]\ar[ru]&&\bullet\ar[rd]\ar[ru]&&{\cdots}\\
&\vdots\ar[ru]\ar[rd]&&\vdots\ar[ru]\ar[rd] &&\vdots\ar[rd]&&&&\vdots\ar[rd]\ar[ru]&&\vdots\ar[rd]\ar[ru]&&\vdots\ar[rd]\ar[ru]&&\vdots\ar[rd]\ar[ru]&& \\
{\cdots}\ar[ru]\ar[rd] &&\bullet \ar[ru]\ar[rd] &&\bullet\ar[ru]\ar[rd]&& I_{j_k}&& && P_{j_k}\ar[rd]\ar[ru]&& \bullet\ar[rd]\ar[ru]&&\bullet\ar[rd]\ar[ru]&&{\cdots}\\
&\vdots\ar[ru]\ar[rd]&&\vdots\ar[ru]\ar[rd] &&\vdots\ar[ru]&&&&\vdots\ar[rd]\ar[ru]&&\vdots\ar[rd]\ar[ru]&&\vdots\ar[rd]\ar[ru]&&\vdots\ar[rd]\ar[ru]&& \\
{\cdots}\ar[ru]\ar[rd] &&\bullet \ar[ru]\ar[rd] &&I_n\ar[ru]\ar[rd]&&&&P_n\ar[rd]\ar[ru]&&\bullet\ar[rd]\ar[ru]&& \bullet\ar[rd]\ar[ru]&&\bullet\ar[rd]\ar[ru]&&{\cdots}\\
&\bullet\ar[ru]\ar[rd]&&\bullet\ar[ru]\ar[rd] &&M_n\ar[rd]&&S_n\ar[rd]\ar[ru]&&\bullet\ar[rd]\ar[ru]&&\bullet\ar[rd]\ar[ru]&&\bullet\ar[rd]\ar[ru]&&\bullet\ar[rd]\ar[ru]&& \\
{\cdots}\ar[ru]\ar[rd] &&\bullet \ar[ru]\ar[rd] &&\bullet\ar[ru]\ar[rd]&&N_n\ar[ru]\ar[rd]&&\bullet\ar[rd]\ar[ru]&&\bullet\ar[rd]\ar[ru]&& \bullet\ar[rd]\ar[ru]&&\bullet\ar[rd]\ar[ru]&&{\cdots}\\
&\bullet\ar[ru]\ar[rd] &&\bullet \ar[ru]\ar[rd] &&\bullet\ar[ru]\ar[rd]&&\bullet\ar[ru]\ar[rd]&&\bullet\ar[rd]\ar[ru]&&\bullet\ar[rd]\ar[ru]&& \bullet\ar[rd]\ar[ru]&&\bullet\ar[rd]\ar[ru]&&\\
{\vdots}\ar[ru] && \vdots\ar[ru]&&\vdots\ar[ru]&&\vdots\ar[ru]&&\vdots\ar[ru]&&\vdots\ar[ru]&&\vdots\ar[ru]&&\vdots\ar[ru] &&{\vdots}\\
}$$
\caption{The connected component $\mathcal{T}_{PI}$ for $Q$ of type (3-1)}
\end{figure}

\begin{figure}
$$\xymatrixcolsep{0.6pc}\xymatrixrowsep{0.6pc}\xymatrix{
{\vdots}\ar[rd]&&{\vdots}\ar[rd]&&{\vdots}\ar[rd] && {\vdots}\ar[rd] &&{\vdots}\ar[rd] &&{\vdots}\ar[rd] &&{\vdots}\ar[rd] &&{\vdots}\ar[rd] &&{\vdots}\\
&\bullet\ar[ru]\ar[rd] && \bullet\ar[ru]\ar[rd] &&\bullet\ar[ru]\ar[rd]&&\bullet\ar[ru]\ar[rd]&&\bullet\ar[rd]\ar[ru]&&\bullet\ar[rd]\ar[ru]&&\bullet\ar[rd]\ar[ru]&&\bullet\ar[rd]\ar[ru] &&\\
{\cdots}\ar[ru]\ar[rd] &&\bullet \ar[ru]\ar[rd] &&\bullet\ar[ru]\ar[rd]&&\bullet\ar[ru]\ar[rd]&& N_1 \ar[rd]\ar[ru]&&\bullet\ar[rd]\ar[ru]&& \bullet\ar[rd]\ar[ru]&&\bullet\ar[rd]\ar[ru]&&{\cdots}\\
&\bullet\ar[ru]\ar[rd]&&\bullet\ar[ru]\ar[rd] &&\bullet\ar[ru]\ar[rd]&&S_1\ar[ru]&&M_1\ar[rd]\ar[ru]&&\bullet\ar[rd]\ar[ru]&&\bullet\ar[rd]\ar[ru]&&\bullet\ar[rd]\ar[ru]&& \\
{\cdots}\ar[ru]\ar[rd] &&\bullet \ar[ru]\ar[rd] &&\bullet\ar[ru]\ar[rd]&& I_1\ar[ru]&& && P_1\ar[rd]\ar[ru]&& \bullet\ar[rd]\ar[ru]&&\bullet\ar[rd]\ar[ru]&&{\cdots}\\
&\vdots\ar[ru]\ar[rd]&&\vdots\ar[ru]\ar[rd] &&\vdots\ar[ru]&&&&\vdots\ar[rd]\ar[ru]&&\vdots\ar[rd]\ar[ru]&&\vdots\ar[rd]\ar[ru]&&\vdots\ar[rd]\ar[ru]&& \\
{\cdots}\ar[ru]\ar[rd] &&\bullet \ar[ru]\ar[rd] &&I_{i_1}\ar[ru]\ar[rd]&&&&P_{i_1}\ar[ru]\ar[rd] &&\bullet\ar[rd]\ar[ru]&& \bullet\ar[rd]\ar[ru]&&\bullet\ar[rd]\ar[ru]&&{\cdots}\\
&\vdots\ar[rd]\ar[ru]&&\vdots\ar[ru]\ar[rd] &&\vdots\ar[rd]&&&&\vdots\ar[rd]\ar[ru]&&\vdots\ar[rd]\ar[ru]&&\vdots\ar[rd]\ar[ru]&&\vdots\ar[rd]\ar[ru]&& \\
{\cdots}\ar[ru]\ar[rd] &&\bullet \ar[ru]\ar[rd] &&\bullet\ar[ru]\ar[rd]&& I_{j_1}&& && P_{j_1}\ar[rd]\ar[ru]&& \bullet\ar[rd]\ar[ru]&&\bullet\ar[rd]\ar[ru]&&{\cdots}\\
&\vdots\ar[ru]\ar[rd]&&\vdots\ar[ru]\ar[rd] &&\vdots\ar[ru]&&&&\vdots\ar[rd]\ar[ru]&&\vdots\ar[rd]\ar[ru]&&\vdots\ar[rd]\ar[ru]&&\vdots\ar[rd]\ar[ru]&& \\
{\cdots}\ar[ru]\ar[rd] &&\bullet \ar[ru]\ar[rd] &&I_{i_2}\ar[ru]\ar[rd]&& && P_{i_2}\ar[rd]\ar[ru] && \bullet\ar[rd]\ar[ru]&& \bullet\ar[rd]\ar[ru]&&\bullet\ar[rd]\ar[ru]&&{\cdots}\\
&\vdots\ar[ru]\ar[rd]&&\vdots\ar[ru]\ar[rd] &&\vdots\ar[rd]&&&&\vdots\ar[rd]\ar[ru]&&\vdots\ar[rd]\ar[ru]&&\vdots\ar[rd]\ar[ru]&&\vdots\ar[rd]\ar[ru]&& \\
{\cdots}\ar[ru]\ar[rd] &&\bullet \ar[ru]\ar[rd] &&\bullet\ar[ru]\ar[rd]&& I_{j_2}&& && P_{j_2}\ar[rd]\ar[ru]&& \bullet\ar[rd]\ar[ru]&&\bullet\ar[rd]\ar[ru]&&{\cdots}\\
&\vdots\ar[ru]\ar[rd]&&\vdots\ar[ru]\ar[rd] &&\vdots\ar[ru]&&&&\vdots\ar[rd]\ar[ru]&&\vdots\ar[rd]\ar[ru]&&\vdots\ar[rd]\ar[ru]&&\vdots\ar[rd]\ar[ru]&& \\
{\cdots}\ar[ru]\ar[rd] &&\bullet \ar[ru]\ar[rd] &&I_{i_k}\ar[ru]\ar[rd]&& && P_{i_k}\ar[rd]\ar[ru] && \bullet\ar[rd]\ar[ru]&& \bullet\ar[rd]\ar[ru]&&\bullet\ar[rd]\ar[ru]&&{\cdots}\\
&\vdots\ar[ru]\ar[rd]&&\vdots\ar[ru]\ar[rd] &&\vdots\ar[rd]&&&&\vdots\ar[rd]\ar[ru]&&\vdots\ar[rd]\ar[ru]&&\vdots\ar[rd]\ar[ru]&&\vdots\ar[rd]\ar[ru]&& \\
{\cdots}\ar[ru]\ar[rd] &&\bullet \ar[ru]\ar[rd] &&\bullet\ar[ru]\ar[rd]&& I_{n}\ar[rd]&& && P_{n}\ar[rd]\ar[ru]&& \bullet\ar[rd]\ar[ru]&&\bullet\ar[rd]\ar[ru]&&{\cdots}\\
&\bullet\ar[ru]\ar[rd]&&\bullet\ar[ru]\ar[rd] &&\bullet\ar[rd]\ar[ru]&&S_n\ar[rd]&&M'_n\ar[rd]\ar[ru]&&\bullet\ar[rd]\ar[ru]&&\bullet\ar[rd]\ar[ru]&&\bullet\ar[rd]\ar[ru]&& \\
{\cdots}\ar[ru]\ar[rd] &&\bullet \ar[ru]\ar[rd] &&\bullet\ar[ru]\ar[rd]&&\bullet\ar[ru]\ar[rd]&&N_n\ar[rd]\ar[ru]&&\bullet\ar[rd]\ar[ru]&& \bullet\ar[rd]\ar[ru]&&\bullet\ar[rd]\ar[ru]&&{\cdots}\\
&\bullet\ar[ru]\ar[rd] &&\bullet \ar[ru]\ar[rd] &&\bullet\ar[ru]\ar[rd]&&\bullet\ar[ru]\ar[rd]&&\bullet\ar[rd]\ar[ru]&&\bullet\ar[rd]\ar[ru]&& \bullet\ar[rd]\ar[ru]&&\bullet\ar[rd]\ar[ru]&&\\
{\vdots}\ar[ru] && \vdots\ar[ru]&&\vdots\ar[ru]&&\vdots\ar[ru]&&\vdots\ar[ru]&&\vdots\ar[ru]&&\vdots\ar[ru]&&\vdots\ar[ru] &&{\vdots}\\
}$$
\caption{The connected component $\mathcal{T}_{PI}$ for $Q$ of type (3-2)}
\end{figure}

{By Proposition \ref{prop3.3},  all minimal string modules of type (1,1) form the $\tau$-orbit
at the bottom of the  tube of rank $n-1$.
By construction, each vertex appears exactly twice in the walks corresponding to the minimal strings. So  the sum of the dimension vectors of the minimal string modules has $2$ at all entries.

By Proposition \ref{prop2.2}, the isomorphism classes of minimal string modules of type (2,2) are in
one-to-one correspondence with connected components of type $\mathbb{Z}A_\infty^\infty$ . There are no other connected components containing string modules, following Proposition \ref{prop3.0}.}

Finally, by Theorem \ref{BRThm}, it remains to consider components consisting of band modules.
We know from Theorem \ref{thm2.1} that an indecomposable module is a band module if and only if it is contained in a  homogeneous tube. Each homogenous tube is uniquely determined by a band module $M(w,m,\varphi)$ for $w\in\overline{\textup{Ba}}(H)$ and $S=(K^m,\varphi)\in \mathcal{S}$ is a simple module over $K[T,T^{-1}]$.
So the connected components of the AR-quiver of $H$ are as described in the theorem.
\end{proof}

\subsection{$\tau$-locally free modules}
Let $e_1,e_2,\cdots,e_n$ be the idempotents in $H$ corresponding to the vertices of $Q$ and let $H_i=e_iHe_i$ for $1\leq i\leq n$. Then
$$H_i\cong \left\{
            \begin{array}{ll}
              K[\varepsilon_i]/(\varepsilon_i^2) & \text{if}\ i=1,n, \\
              K & \text{if}\ 2\leq i\leq n-1. \\
            \end{array}
          \right.
$$

\begin{defn}
A left $H$-module $M$ is called {\em locally free} if $M_i=e_iM$ is a free $H_i$-modules for each $i\in Q_0$.
An indecomposable locally free $H$-module $M$ is called {\em $\tau$-locally free}, if $\tau^k (M)$ is locally free for all $k\in\mathbb{Z}$.
\end{defn}

\begin{lem}\label{lem3.6}
The following are true.
\begin{itemize}
\item[(1)] If $M(w)$ is a minimal string module of type (1,2),  then the modules $M(w_{h^i})$ are not locally free, where
$i\geq 0$.

\item[(2)] If $M(w)$ is a minimal string module of type (2,1), then the modules $M({}_{c^i}w)$ are not locally free,
where   $i\geq0$.

\item[(3)] If $M(w)$ is a minimal string module of type (2,2), then $M(w)$ is not locally free. Moreover, none of the modules $M(w_{h^i})$, $M(_{h^i}w)$ $M(w_{c^i})$ and $M(_{c^i}w)$ $(i\geq 1)$
is locally free.
\end{itemize}
\end{lem}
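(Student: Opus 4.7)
The plan is to reformulate local-freeness as a simple combinatorial condition on the walk of each string module, and then to track this condition along the hook/cohook extensions appearing in the statement. Since the only non-trivial symmetrizers appear at vertices $1$ and $n$, we need to check freeness of $e_i M(w)$ over $H_i=K[\varepsilon_i]/(\varepsilon_i^2)$ only for $i\in\{1,n\}$. Unwinding the string-module definition, $e_i M(w)$ is free over $H_i$ iff every walk position $k$ of $w$ with $x_k=i$ is \emph{loop-paired}, that is, one of the adjacent letters $c_{k-1}$, $c_k$ is $\varepsilon_i^{\pm 1}$: each loop letter contributes a free rank-one $H_i$-summand on the two positions it joins, while a loop-isolated position at vertex $i$ contributes a trivial $K$-summand that is not free over $H_i$, and the relation $\varepsilon_i^2\in I$ prevents a single loop letter from joining more than two consecutive positions.

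With this reduction in hand, my first step is to exhibit a loop-isolated position at vertex $1$ or $n$ in each of the minimal strings appearing in the lemma. For $w=(\varepsilon_1)_{-}$ in part (1), the requirement that $\varepsilon_1\cdot w$ be a string forces the leftmost letter $c_1$ of $w$ (when $w$ is non-trivial) to be the inverse of a non-loop arrow out of vertex $1$, so the leftmost walk position $x_1$ sits at vertex $1$ and is loop-isolated; if $w$ is trivial, the single walk position is likewise loop-isolated. The case $w=(\varepsilon_n)_{-}$ is symmetric at vertex $n$. For part (2), dually, $w={}_{-}(\varepsilon_1)$ or ${}_{-}(\varepsilon_n)$ yields a loop-isolated position at the rightmost walk position. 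For part (3), Proposition~\ref{prop2.1}(6) tells us that a minimal string $w$ of type $(2,2)$ satisfies $\{s(w),t(w)\}\subseteq\{1,n\}$ and that neither $c_1$ nor $c_m$ is a loop or the inverse of a loop, so both endpoints of the walk are loop-isolated; in particular $M(w)$ itself is not locally free.

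The second, and most essential, step is to observe that these loop-isolated positions are preserved along the infinite families of hook and cohook extensions listed in the lemma. The right-handed extensions $w_{h^i}$ and $w_{c^i}$ are built on top of $w$ on the right and hence leave $c_1$ and the left endpoint $x_1$ untouched; dually, the left-handed extensions ${}_{h^i}w$ and ${}_{c^i}w$ preserve $c_m$ and the right endpoint $x_{m+1}$. In every case of parts (1)--(3) the preserved side of $w$ is exactly the side carrying the loop-isolated position identified above, so the non-free $K$-summand persists in $e_1 M$ or $e_n M$ throughout the corresponding ray or coray.

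The main obstacle is to pin down the trivial-string situation in part (2), such as $w={}_{-}(\varepsilon_1)=1_{(1,\cdot)}$ in Case~I, where the cohook ${}_{c}w={}_{-}\beta\cdot\beta\cdot w$ depends on a choice of arrow $\beta$ at vertex $1$. Taking $\beta=\varepsilon_1$ would give ${}_{c}w=\varepsilon_1$, which \emph{is} locally free and so contradicts the lemma; one must therefore verify that the cohook actually used corresponds to the non-loop arrow $\beta$. This choice is singled out by the AR-component structure of $\mathcal{T}_{PI}$ established in the proof of Theorem~\ref{thm3.1}: the coray produced by Lemma~\ref{lem:3.11}(4) is the one living inside $\mathcal{T}_{PI}$, and the incoming irreducible maps to $M(w)$ there come from the non-loop side rather than from $M(\varepsilon_1)$, which lies in a different $\tau$-orbit. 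Once this selection is pinned down, the preservation argument of the previous step applies uniformly and completes the proof.
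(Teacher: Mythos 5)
Your overall strategy is the same as the paper's: reduce local freeness to the combinatorial condition that every occurrence of vertex $1$ or $n$ in the walk is adjacent to a loop letter, locate a loop-isolated occurrence at the appropriate end of each minimal string, and observe that the one-sided extensions $w_{h^i}$, ${}_{c^i}w$, etc.\ leave that end of the walk untouched. Making the ``loop-paired versus loop-isolated'' reduction explicit is an improvement on the paper, which only gestures at it, and you are right to isolate the one genuinely delicate point, namely that for a trivial string at vertex $1$ or $n$ one must know which of the two incident arrows (the loop or the non-loop arrow) is consumed by the hook or cohook on the relevant side; the paper simply asserts the answer.

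However, your justification of that point is wrong as stated. For $w={}_{-}(\varepsilon_1)=1_{(1,\cdot)}$, so $M(w)=S_1$, one of the two irreducible maps into $S_1$ in $\mathcal{T}_{PI}$ \emph{is} the map $M(\varepsilon_1)=I_1\rightarrow I_1/\soc I_1=S_1$ (see the figures accompanying Theorem \ref{thm3.1}), so the claim that ``the incoming irreducible maps come from the non-loop side rather than from $M(\varepsilon_1)$'' is false; nor does membership of a coray in $\mathcal{T}_{PI}$ single anything out, since both incoming arrows lie in that component. The correct disambiguation is purely combinatorial: by definition ${}_{-}(\varepsilon_1)$ is the trivial string $1_{(1,t)}$ for which $1_{(1,t)}\cdot\varepsilon_1$ is a string, i.e.\ the loop belongs to its right-composable letters; the string-algebra axioms at vertex $1$ then force the left-composable letters of $1_{(1,t)}$ to be exactly $\{\alpha_{21}\}$, so ${}_{c}w={}_{-}\alpha_{21}\cdot\alpha_{21}$ necessarily uses the non-loop arrow and ends in a loop-isolated occurrence of vertex $1$. (Equivalently, the loop direction is accounted for by $w_c\sim\varepsilon_1$, which is the \emph{other} middle term of the AR-sequence of Proposition \ref{prop1.2}(4) ending at $S_1$, and about which the lemma claims nothing.) With this correction your argument goes through and coincides with the paper's.
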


\begin{proof}
(1) If $M(w)$ is a minimal string module of type (1,2), then $w=(\varepsilon_1)_{-}$ or $w=(\varepsilon_n)_{-}$,  by Proposition
\ref{prop2.1}. Without loss of generality, we assume that $w=(\varepsilon_n)_{-}$. If $n$ is a sink in $Q^0$, then
$w=1_{(n,t)}$ and all the other $w_{h^{\bullet}}$ have the form:
\[
\xymatrix{n& n-1\ar[l] \ar@{-}[r]& \bullet \ar@{-}[r]&\cdots}
\]
that is, it ends with the arrow $\alpha_{n-1, n}$. Therefore none of $M(w_{h^{\bullet}})$ is locally free.
Similarly, when $n$ is a source in $Q^0$, $w_{h^{\bullet}}$ have the form:
\[
\xymatrix{n\ar[r]& n-1 \ar@{-}[r]& \bullet \ar@{-}[r]&\cdots}
\]
and so none of $M(w_{h^{\bullet}})$ is locally free either.

Similarly,   (2) holds.

(3) If $M(w)$ is a minimal string module of type (2,2), then by Proposition \ref{prop2.1}, the string $w$ is one of the following form
$$\xymatrix{1\ar@{-}[r]& 2\ar@{-}[r]&\cdots\ar@{-}[r]& n-1\ar@{-}[r] & n},$$
$$\xymatrix{1\ar@{-}[r]& 2\ar@{-}[r]&\cdots\ar@{-}[r]& 2\ar@{-}[r] & 1},$$
$$\xymatrix{n\ar@{-}[r]& n-1\ar@{-}[r]&\cdots\ar@{-}[r]& n-1\ar@{-}[r] & n}.$$
So by similar arguments as in (1), $M(w)$ is not locally free and none of the modules
$M(w_{h^i})$, $M(_{h^i}w)$ (resp. $M(w_{c^i})$, $M(_{c^i}w)$)
 obtained by repeatedly adding hooks (resp.  cohooks) from either the right or the left (but not both)
 is locally free. This completes the proof.
\end{proof}

Let $w_0$ be the shortest walk with $s(w_0)=1$ and $t(w_0)=n$, consisting of all the arrows in $Q^0$.
We also denote the corresponding string starting from 1 and terminating at $n$ by $w_0$.
Following the definition of a band, we have following.

\begin{lem}\label{lem:band}
 Any band $w$ is equivalent to a band of the  standard form
 \begin{center}
 $w_0^{-1}\varepsilon_n^{\pm} w_0 \varepsilon_1^{\pm}... w_0\varepsilon_1^{\pm},$\end{center}
 whose starting vertex and terminating vertex are both $1$.
 In particular,  each time the walk of $w$ reaches vertices $1$ and $n$ in the middle of the walk (i.e. different from $s(w)$ and $t(w)$), it goes via the loops at these vertices.
\end{lem}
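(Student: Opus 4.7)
My plan is to argue in two stages. First, I would show that the walk underlying any band must use both loops $\varepsilon_1^{\pm}$ and $\varepsilon_n^{\pm}$. The quiver $Q^0$ obtained by deleting the loops is a tree of type $A_n$, and any non-trivial non-backtracking walk in a tree is the unique geodesic between its endpoints, so no non-trivial closed non-backtracking walk exists; hence a band must use at least one loop. Suppose for contradiction it uses only $\varepsilon_1^{\pm}$. A local analysis at vertex $1$ (where the only incident non-loop edge is the arrow joining $1$ and $2$) combined with the zero relation $\varepsilon_1^2\in I$ and the string condition $c_{i+1}\neq c_i^{-1}$ shows that two consecutive loops are forbidden, and that after any occurrence of $\varepsilon_1^{\pm}$ the walk must depart towards vertex $2$. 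Once in the interior $\{2,\ldots,n-1\}$ of $Q^0$, which is a sub-tree with no loops, non-backtracking forces monotone progress all the way to vertex $n$, where the walk cannot avoid using $\varepsilon_n^{\pm}$ -- contradicting the assumption.

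Second, granted that both loops appear, a symmetric local analysis at vertices $1$ and $n$ (each having only one non-loop incident edge) shows that every visit to either vertex in the middle of the walk occurs via an isolated loop, flanked on both sides by that unique non-loop edge. This pins down the structure between consecutive loop events: from vertex $1$ the walk must travel in $Q^0$ to the only other loop vertex $n$, and by uniqueness of non-backtracking walks in a tree this walk is exactly $w_0$; symmetrically between a loop at $n$ and the next loop event the walk is $w_0^{-1}$.

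Third, I would use the cyclic permutation part of the equivalence $\rho'$ to rotate $w$ so that its last letter $c_m$ is $\varepsilon_1^{\pm}$, which is possible since $\varepsilon_1^{\pm}$ appears in $w$. Then $s(w)=t(w)=1$, and tracing the walk from the start produces exactly the alternating blocks $\varepsilon_1^{\pm}, w_0, \varepsilon_n^{\pm}, w_0^{-1}$ repeated some positive number of times; in word form (reading right to left in the walk) this is the asserted shape $w_0^{-1}\varepsilon_n^{\pm} w_0 \varepsilon_1^{\pm}\cdots w_0 \varepsilon_1^{\pm}$, giving both conclusions of the lemma at once.

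The main obstacle is careful bookkeeping rather than conceptual depth: one must juggle the convention $s(c_i)=t(c_{i+1})$, the zero relations, and the string condition simultaneously when cataloguing the admissible moves at vertices $1$ and $n$, and separately observe that the band condition makes the boundary vertex $s(w)=t(w)$ behave as an interior visit in $w^2$, so the loop-at-the-boundary analysis is forced to be consistent. Beyond this, the geometric picture -- that any band in a linear tree with exactly two end-loops must be a union of round trips between the two loops -- is transparent and drives the entire argument.
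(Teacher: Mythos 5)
Your argument is correct. The paper in fact gives no proof of this lemma (it is asserted to follow from the definition of a band), so there is no authorial argument to compare against; your three-stage verification --- no closed non-backtracking walk exists in the tree $Q^0$, the local analysis at the loop vertices $1$ and $n$ (two consecutive loop letters are excluded by $\varepsilon_i^2\in I$ together with the no-backtracking condition, and an interior vertex admits no turning point), and the cyclic rotation permitted by $\rho'$ --- is exactly the bookkeeping needed to justify the statement, including the correct observation that the band condition ($w^r$ a string for all $r$) lets you treat the seam $s(w)=t(w)$ as an interior visit.
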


\begin{example} The following are all bands of the standard form:
\begin{center}
 $w_0^{-1}\varepsilon_n w_0 \varepsilon_1,  ~~ w_0^{-1}\varepsilon_n w_0 \varepsilon_1^{-1},  ~~
 w_0^{-1}\varepsilon_nw_0\varepsilon_1^{-1}w_0^{-1} \varepsilon_n w_0\varepsilon_1,$\end{center}
 where the last band is a composition of the first two.
\end{example}

\begin{thm}\label{thm3.2}
 Let 
 $M$ be an indecomposable $H$-module. Then $M$ is $\tau$-locally free if and only if one of the following is satisfied.
\begin{itemize}
\item[(1)] $M$ is preprojective.

\item[(2)] $M$ is preinjective.

\item[(3)] $M$ is a regular module occurring in any tube.
\end{itemize}
\end{thm}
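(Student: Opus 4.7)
The plan is to split the indecomposable $H$-modules by their AR-component via Theorem \ref{thm3.1} --- either $\mathcal{T}_{PI}$, the rank $n-1$ tube, a homogeneous tube, or one of the components $\mathcal{T}_\lambda$ of type $\mathbb{Z}A_\infty^\infty$ --- and treat each case. The tube cases correspond to condition (3) of the theorem, so for them it suffices to show every module is locally free, since $\tau$ preserves each tube. For a homogeneous tube, whose modules are band modules $M(w, s, \varphi)$, Lemma \ref{lem:band} says $w$ is equivalent to a band of standard form in which every visit to vertex $1$ (resp. $n$) is adjacent to an $\varepsilon_1$- (resp. $\varepsilon_n$-) edge, so the loop action has rank $\dim V_j/2$ and $M(w, s, \varphi)$ is free over $H_j$. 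For the rank $n-1$ tube, Proposition \ref{prop3.3} and the cases (a)--(f) in the proof of Proposition \ref{prop2.1} identify the bottom modules either as simples at non-admissible interior vertices (trivially locally free) or as long walks of the form $\varepsilon_n\alpha_{n,n-1}\cdots\alpha_{2,1}\varepsilon_1$ (up to direction) whose visits to $1$ and $n$ are fully covered by the loops; local freeness then propagates upward through the tube's explicit ray structure.

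Next, to show no module in any $\mathcal{T}_\lambda$ is $\tau$-locally free, I would parametrize $\mathcal{T}_\lambda$ by coordinates $(a,b)\in\mathbb{Z}^2$ around $M(\lambda)$, where $a$ (resp. $b$) is signed to record hooks or cohooks added on the left (resp. right). By Lemma \ref{lem3.6}(3), every module on the four coordinate axes $(\pm k,0)$ and $(0,\pm k)$ for $k\ge 1$, namely the $M(\lambda_{h^i}), M({}_{h^i}\lambda), M(\lambda_{c^i}), M({}_{c^i}\lambda)$, fails to be locally free. The AR-sequence structure of Proposition \ref{prop1.2} yields $\tau(a,b)=(a-1,b-1)$, so each $\tau$-orbit is a diagonal $\{(a,a+c):a\in\mathbb{Z}\}$ that necessarily meets both coordinate axes; hence every orbit in $\mathcal{T}_\lambda$ contains a non-locally-free module.

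It remains to treat $\mathcal{T}_{PI}$. A direct check confirms that every indecomposable projective $P_i$ and injective $I_i$ is locally free: at vertex $j\in\{1,n\}$, the loop $\varepsilon_j$ pairs up the basis vectors of $e_j P_i$ (resp. $e_j I_i$). Using Propositions \ref{prop1.1} and \ref{prop1.2} together with the explicit layout of $\mathcal{T}_{PI}$ in Figures~3 and~4, I would propagate local freeness along the $\tau^{-}$-orbits of projectives and the $\tau^{+}$-orbits of injectives; the key point is that any hook operation inserted near vertex $1$ or $n$ is forced, by the gentle conditions and the shape of $Q$, to carry with it the loop $\varepsilon_1$ or $\varepsilon_n$. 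The remaining modules in $\mathcal{T}_{PI}$ lie in $\tau$-orbits of the minimal string modules of types $(1,2)$ and $(2,1)$ from Proposition \ref{prop2.1}(4)--(5), namely $M((\varepsilon_1)_-), M((\varepsilon_n)_-), S_1=M({}_-(\varepsilon_1))$, and $M({}_-(\varepsilon_n))$, or of their hook/cohook iterates, and Lemma \ref{lem3.6}(1)--(2) shows such orbits contain non-locally-free modules. The main obstacle is this $\tau$-propagation step in $\mathcal{T}_{PI}$: one must carefully track how successive $\tau^{\pm 1}$ modify the underlying strings at vertices $1$ and $n$, verifying that the loops appear wherever needed along the preprojective/preinjective orbits but are absent in the middle orbits.
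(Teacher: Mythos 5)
Your overall architecture matches the paper's: split by the components of Theorem \ref{thm3.1}, show every module in a tube is locally free (homogeneous tubes via the standard form of bands from Lemma \ref{lem:band}, the rank $n-1$ tube via its bottom modules), and kill the $\mathbb{Z}A_\infty^\infty$ components and the middle of $\mathcal{T}_{PI}$ by observing that every $\tau$-orbit there meets a ray or coray of non-locally-free modules from Lemma \ref{lem3.6}. Those parts are essentially the paper's argument (the paper uses a filtration by the bottom modules for the rank $n-1$ tube rather than propagation along rays, but since each $H_i$ is self-injective this is the same point), and your diagonal-orbit picture in $\mathcal{T}_\lambda$ is a correct rephrasing of the paper's four-region argument.

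The genuine gap is the step you yourself flag as ``the main obstacle'': showing that \emph{every} $\tau^{-k}P_j$ and $\tau^{k}I_j$ is locally free. Checking local freeness of the $P_i$ and $I_i$ themselves is easy, but your proposed induction --- tracking how each application of $\tau^{\mp 1}$ (adding hooks on both ends per Proposition \ref{prop1.2}(1)) modifies the string near vertices $1$ and $n$ and verifying the loops always come in pairs --- is exactly the nontrivial combinatorial verification, and you have not carried it out. It is delicate precisely because locally free and non-locally-free modules sit adjacent in $\mathcal{T}_{PI}$ (e.g.\ $\operatorname{rad}P_1=P_2\oplus M((\varepsilon_1)_-)$ with the second summand \emph{not} locally free), so nothing propagates for free along irreducible maps; one would have to control the actual strings in every preprojective $\tau$-orbit. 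The paper avoids this entirely: preprojective and preinjective modules are rigid, and rigid indecomposables are $\tau$-locally free by \cite[Proposition 11.6]{[GLS1]}. Either import that citation or actually perform the string-level induction; as written, the ``if'' direction for cases (1) and (2) is not proved.
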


\begin{proof}
Any preprojective module $\tau^{i}P_j$ and any preinjective module $\tau^s I_t$ are rigid, and so they are $\tau$-locally free by \cite[Proposition 11.6]{[GLS1]}.

Observe that the modules at the bottom of the tube of rank $n-1$ (see Proposition \ref{prop3.3}) are locally free and the other modules in the tube have a filtration by these modules and so are locally free as well. Therefore they are all $\tau$-locally free.

By Lemma \ref{lem:band}, an indecomposable band module is locally free and thus $\tau$-locally free, as such a module is in a homogeneous tube, i.e. a tube of rank 1.  Consequently, any indecomposable module in a homogeneous tube is $\tau$-locally free. Therefore the modules described in (1) - (3) are all $\tau$-locally free.

Next we show that there is no other $\tau$-locally free modules. First consider modules in any component $\mathcal{T}_w$ of type $\mathbb{Z}A_\infty^\infty$, where $w$ is the minimal string of type $(2, 2)$ that determines the component. By Lemma \ref{lem3.6},  modules in the rays and corays that divides $\mathcal{T}_w$  into 4 regions in the proof of Proposition \ref{prop2.2} are not locally free. Therefore any $\tau$-orbit in  $\mathcal{T}_w$  contains modules that are not locally free and so there is no $\tau$-locally free module in $\mathcal{T}_w$.

By Theorem \ref{thm3.1}, it remains to show that modules other than the preprojective and preinjective modules in the component $\mathcal{T}_{PI}$ are not $\tau$-locally free. Observe
 that the orbits of the other modules meet either the rays or the corays containing $S_1$ and
 $S_n$, respectively. As $S_1$ and $S_n$ are not locally free modules,
 modules in those rays/corays are not locally free by  Lemma \ref{lem3.6}. Therefore the modules
in $\mathcal{T}_{PI}$ that are neither preprojective nor preinjective are not $\tau$-locally free.
This completes the proof.
\end{proof}

\section{An application to the conjecture by Geiss-Lercler-Schr\"{o}er}

In this section, we  apply Theorem \ref{thm3.2} to prove Conjecture 1 in the case where the Cartan matrix is of type $\widetilde{C}_{n-1}$,
$$C=\left(
                \begin{array}{ccccccc}
                  2 & -1 &  &  & & \\
                  -2 & 2 & -1 &  &  &\\
                  & -1 & 2 & -1 & & \\
                 & & \ddots & \ddots & \ddots & \\
                &  & & -1 & 2 & -1 & \\
                 & & &  & -1 & 2 & -2 \\
                 & &  &  &  & -1 & 2 \\
                \end{array}
              \right)
$$
and the symmetrizer $D=\textup{diag}(2,1,1,\cdots,1,1,2)$.

\subsection{Roots and Coxeter transformations} In this subsection $C$ can be any symmetrizable Cartan $n\times n$ matrix of affine type and $D$ can be any symmetrizer of $C$.
Let $\alpha_1,\alpha_2,\cdots,\alpha_n$ be a list of  positive simple roots of type $C$.
For $1\leq i,j\leq n$, define
$$s_i(\alpha_j)=\alpha_j-c_{ij}\alpha_i.$$
This yields a reflection $s_i:\mathbb{Z}^{n}\rightarrow \mathbb{Z}^{n}$ on the root lattice $\mathbb{Z}^{n}=\sum_{i=1}^{n}\mathbb{Z}\alpha_i$, where $\alpha_i$ is identified with the $i$th standard basis vector of $\mathbb{Z}^{n}$.
The {\em Weyl group} $W$ is the subgroup of $\textup{Aut}(\mathbb{Z}^{n})$ generated by $s_1,s_2,\cdots,s_n$.
Denote by $$\Delta_{\textup{re}}=\cup_{i=1}^{n}W(\alpha_i)$$  the set of {\em real roots}, and by
 $$\Delta_{\textup{im}}=\mathbb{Z}\delta$$  the set of {\em imaginary roots},
 where $\delta$ is the unique minimal positive imaginary root determined by the Cartan matrix
 $C$.   For instance, in the case of
 type $\widetilde{C}_{n-1}$,  $$\delta=\alpha_1+2\Sigma_{i=2}^{n-1}\alpha_i+\alpha_n=(1,2,\cdots,2,1).$$
The set  of {\em roots} determined by $C$ is
 $$\Delta=\Delta_{\textup{re}}\cup\Delta_{\textup{im}}$$ and
 with the set of {\em positive roots}
$$\Delta^{+}=\Delta\cap \mathbb{N}^{n}=\Delta^+_{\textup{re}}\cup\Delta^+_{\textup{im}},$$
where $\Delta^+_{\textup{re}}=\Delta_{\textup{re}}\cap\mathbb{N}^{n}$ and $\Delta^+_{\textup{im}}=\Delta_{\textup{im}}\cap\mathbb{N}^{n}$.

An {\em orientation} of $C$ is a subset $\Omega\subset\{1,2,\cdots,n\}\times\{1,2,\cdots,n\}$ such that the following hold:

(1) $\{(i,j),(j,i)\}\cap\Omega\neq\emptyset$ if and only if $c_{ij}<0$;

(2) For each sequence $((i_1,i_2),(i_2,i_3),\cdots,(i_t,i_{t+1}))$ with $t\geq 1$ and $(i_s,i_{s+1})\in\Omega$ for all $1\leq s\leq t$, we have $i_1\neq i_{t+1}$.

Let $Q=Q(C,  \Omega)$ be the quiver with vertices $Q_0=\{1, \dots, n\}$ and arrows
\[Q_1=\{\alpha^g_{ji}: i\rightarrow j\mid  (j, i) \in \Omega \textup{ and } 1\leq g\leq \textup{gcd}\{|c_{ij}|, ~|c_{ji}|\}  \}\cup
\{\varepsilon_i: i\rightarrow i\mid i\in Q_0\}.\]
Let $Q^0=Q^0(C,\Omega)$ be the quiver obtained from $Q$ with the loops $\varepsilon_i$ removed.

For an orientation $\Omega$ of $C$ and an admissible vertex $i$ in $Q^0(C,\Omega)$, let
$$s_i(\Omega)=\{(r,s)\in\Omega\mid i\notin\{r,s\}\}\cup\{(s,r) \mid i\in\{r,s\},(r,s)\in\Omega\}.$$
Then $s_i(\Omega)$ is again an orientation of $C$.
A sequence $\mathbf{i}=(i_1,i_2,\cdots,i_n)$ is called a {\em $+$-admissible sequence} for $(C,\Omega)$ if $\{i_1,i_2,\cdots,i_n\}=\{1,2,\cdots,n\}$, $i_1$ is a sink in $Q^0(C,\Omega)$ and $i_k$ is a sink in $Q^0(C,s_{i_{k-1}}\cdots s_{i_1}(\Omega))$ for $2\leq k\leq n$. For such a sequence $\mathbf{i}$, define $$\beta_{\mathbf{i},k}=\left\{\begin{array}{ll}
                \alpha_{i_1} & \text{if}\ k=1, \\
                s_{i_1}s_{i_2}\cdots s_{i_{k-1}}(\alpha_{i_{k}}) & \text{if}\ 2\leq k\leq n.
              \end{array}
\right.$$
Similarly, define
$$\gamma_{\mathbf{i},k}=\left\{\begin{array}{ll}  \alpha_{i_n} & \text{if}\ k=n, \\
                s_{i_n}\cdots s_{i_{k+1}}(\alpha_{i_k}) & \text{if}\ 1\leq k\leq n-1. \\
                      \end{array}
\right.$$

Let $c_\mathbf{i}=s_{i_n}s_{i_{n-1}}\cdots s_{i_1}:\mathbb{Z}^{n}\rightarrow\mathbb{Z}^{n}$. Then $c_\mathbf{i}^{-1}=s_{i_1}s_{i_{2}}\cdots s_{i_n}:\mathbb{Z}^{n}\rightarrow\mathbb{Z}^{n}$. These are two Coxeter transformations  associated to $\mathbf{i}$.


For a $+$-admissible sequence $\mathbf{i}=(i_1,i_2,\cdots,i_n)$ for $(C,~\Omega)$, the rotated sequence
\[\mathbf{i}'=(i_2,i_3\cdots,i_n, i_1)\]
is also a $+$-admissible sequence for $(C,s_{i_1}(\Omega))$, and $c_{\mathbf{i'}}=s_{i_1}s_{i_n}\cdots s_{i_{3}} s_{i_2}$ and $c_{\mathbf{i'}}^{-1}=s_{i_2}s_{i_3}\cdots s_{i_{n}} s_{i_1}$ are the Coxeter transformations associated to $\mathbf{i'}$.

Similary, a {\em $-$-admissible sequence} can be defined using sources. In fact, the sequence
$\mathbf{i}=(i_1,i_2,\cdots,i_n)$ is $+$-admissible if and only if its reverse sequence
$\mathbf{i}^{-1}=(i_n,i_{n-1},\cdots,i_1)$ is $-$-admissible. We have,
$c_{\mathbf{i}}^{-1}=c_{\mathbf{i}^{-1}}$. Similar to $+$-admissible sequences, a rotated
sequence of a $-$-admissible sequence is also  $-$-admissible. We call  both
a  $+$-admissible sequence and a $-$-admissible sequence an {\em admissible sequence}.

For a $-$-admissible sequence $\mathbf{i}=(i_1, \dots, i_n)$, define $$\gamma_{\mathbf{i},k}=\left\{\begin{array}{ll}
                \alpha_{i_1} & \text{if}\ k=1, \\
                s_{i_1}s_{i_2}\cdots s_{i_{k-1}}(\alpha_{i_{k}}) & \text{if}\ 2\leq k\leq n,
              \end{array}
\right.$$
and
$$\beta_{\mathbf{i},k}=\left\{\begin{array}{ll}  \alpha_{i_n} & \text{if}\ k=n, \\
                s_{i_n}\cdots s_{i_{k+1}}(\alpha_{i_k}) & \text{if}\ 1\leq k\leq n-1. \\
                      \end{array}
\right.$$

\begin{lem}\label{lem4.0} Let $\mathbf{i}=(i_1,  \dots, i_n)$ be an admissible sequence and
$\mathbf{i}'=(i_2, \dots, i_n, i_1)$. Then the reflection $s_{i_1}$ induces a bijection between
$\{c_\mathbf{i}^{-r}(\beta_{\mathbf{i},k})|r\in\mathbb{Z}_{\geq 0},1\leq k\leq n\}\cup\{c_\mathbf{i}^{s}(\gamma_{\mathbf{i},k})|s\in\mathbb{Z}_{\geq 0},1\leq k\leq n\}\setminus\{\alpha_{i_1}\}$ and $\{c_{\mathbf{i}'}^{-r}(\beta_{\mathbf{i}',k})|r\in\mathbb{Z}_{\geq 0},1\leq k\leq n\}\cup\{c_{\mathbf{i}'}^{s}(\gamma_{\mathbf{i}',k})|s\in\mathbb{Z}_{\geq 0},1\leq k\leq n\}\setminus\{\alpha_{i_1}\}.$
\end{lem}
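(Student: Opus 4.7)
The plan is to exploit the conjugation identity $c_{\mathbf{i}'}=s_{i_1}\,c_{\mathbf{i}}\,s_{i_1}$, which I would verify first by a direct computation: since $c_{\mathbf{i}}=s_{i_n}\cdots s_{i_2}s_{i_1}$ and $s_{i_1}^2=\mathrm{id}$, one obtains $s_{i_1}\,c_{\mathbf{i}}\,s_{i_1}=s_{i_1}s_{i_n}\cdots s_{i_2}=c_{\mathbf{i}'}$. Iterating gives $s_{i_1}\,c_{\mathbf{i}}^{\,r}=c_{\mathbf{i}'}^{\,r}\,s_{i_1}$ for every $r\in\mathbb{Z}$, so $s_{i_1}$ intertwines the two Coxeter actions on $\mathbb{Z}^{n}$. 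Since $s_{i_1}$ is an involutive bijection of the root lattice, proving the lemma reduces to showing the set-theoretic equality $s_{i_1}(A)=A'$, where $A$ and $A'$ denote the two sets in the statement.

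Next I would match the generators. Assuming $\mathbf{i}$ is $+$-admissible, a direct manipulation of the defining formulas yields
\[ s_{i_1}(\beta_{\mathbf{i},k}) = \beta_{\mathbf{i}',k-1} \quad \text{and} \quad s_{i_1}(\gamma_{\mathbf{i},k}) = \gamma_{\mathbf{i}',k-1} \quad\text{for } k=2,\dots,n, \]
while the remaining generator $\beta_{\mathbf{i},1}=\alpha_{i_1}$ is precisely what the set difference excludes. The key identity relating the ``endpoint'' generators is $\beta_{\mathbf{i}',n}=s_{i_2}\cdots s_{i_n}(\alpha_{i_1})=-c_{\mathbf{i}'}^{-1}(\alpha_{i_1})$, obtained by expanding $c_{\mathbf{i}'}^{-1}(\alpha_{i_1})=s_{i_2}\cdots s_{i_n}s_{i_1}(\alpha_{i_1})$ and using $s_{i_1}(\alpha_{i_1})=-\alpha_{i_1}$. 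From this, $s_{i_1}(c_{\mathbf{i}}^{-r}(\alpha_{i_1}))=c_{\mathbf{i}'}^{-(r-1)}(\beta_{\mathbf{i}',n})$ for $r\geq 1$ and $s_{i_1}(c_{\mathbf{i}}^{s}(\gamma_{\mathbf{i},1}))=c_{\mathbf{i}'}^{s+1}(\gamma_{\mathbf{i}',n})$ for $s\geq 0$, so the $k=1$ orbits in $A$ are carried bijectively onto the $k=n$ orbits in $A'$ with the correct index shift.

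Assembling the pieces, the $k\geq 2$ orbits of $A$ account for $\{c_{\mathbf{i}'}^{-r}(\beta_{\mathbf{i}',l}),\,c_{\mathbf{i}'}^{s}(\gamma_{\mathbf{i}',l}) : l=1,\dots,n-1\}$, while the $k=1$ orbits fill the remaining $l=n$ slots; the excluded element $\gamma_{\mathbf{i}',n}=\alpha_{i_1}$ on the target side corresponds precisely to the ``$t=0$'' term that is missing from the shifted image. Hence $s_{i_1}(A)=A'$, and the induced map is the desired bijection. The $-$-admissible case follows by the same argument with the roles of $\beta$ and $\gamma$ interchanged, or equivalently by applying the $+$-admissible result to $\mathbf{i}^{-1}$ in view of $c_{\mathbf{i}^{-1}}=c_{\mathbf{i}}^{-1}$. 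The main obstacle is the careful tracking of index shifts under the cyclic rotation and of the excluded root $\alpha_{i_1}$ through the involution; no deeper difficulty arises once the conjugation identity is in place.
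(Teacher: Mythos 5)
Your proposal is correct and follows essentially the same route as the paper: both rest on the intertwining identity $s_{i_1}c_{\mathbf{i}}=c_{\mathbf{i}'}s_{i_1}$ (equivalently $c_{\mathbf{i}'}=s_{i_1}c_{\mathbf{i}}s_{i_1}$), the index-shift identities $s_{i_1}(\beta_{\mathbf{i},k})=\beta_{\mathbf{i}',k-1}$ and $s_{i_1}(\gamma_{\mathbf{i},k})=\gamma_{\mathbf{i}',k-1}$ for $2\leq k\leq n$, and the separate treatment of the $k=1$ orbits via $c_{\mathbf{i}'}^{-1}(\alpha_{i_1})=-\beta_{\mathbf{i}',n}$ and $\gamma_{\mathbf{i}',n}=\alpha_{i_1}$, which are exactly the two case-by-case formulas in the paper's proof. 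The bookkeeping of the excluded root $\alpha_{i_1}$ on both sides and the reduction of the $-$-admissible case to the $+$-admissible one also match the paper.
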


\begin{proof} First consider the case where $\mathbf{i}$ is $+$-admissible.
Note that $s_{i_1}c_{\mathbf{i}'}=c_{\mathbf{i}}s_{i_1}$, $s_{i_1}c^{-1}_{\mathbf{i}}=c^{-1}_{\mathbf{i}'}s_{i_1}$ and $\beta_{\mathbf{i},1}=\gamma_{\mathbf{i}',n}=\alpha_{i_1}$ by definition. The lemma follows from the following calculation
$$s_{i_1}(c_\mathbf{i}^{-r}(\beta_{\mathbf{i},k}))=\left\{\begin{array}{ll}
                                                     c_{\mathbf{i}'}^{-r}(\beta_{\mathbf{i}',k-1})& \text{if}\ 2\leq k\leq n, r\geq 0, \\
                                                      c_{\mathbf{i}'}^{-r+1}(\beta_{\mathbf{i}',n}) & \text{if}\ k=1, r>0
\end{array}\right.
$$ and
$$s_{i_1}(c_\mathbf{i}^{s}(\gamma_{\mathbf{i},k}))=\left\{\begin{array}{ll}
                                                     c_{\mathbf{i}'}^{s}(\gamma_{\mathbf{i}',k-1}) & \text{if}\ 2\leq k\leq n, \\
                                                      c_{\mathbf{i}'}^{s+1}(\gamma_{\mathbf{i}',n}) & \text{if}\ k=1
\end{array}\right.
$$ for each $s\geq 0$.

When $\mathbf{i}$ is $-$-admissible, the proof can be similarly done. We skip the details.
\end{proof}

\subsection{Geiss-Leclerc-Schr\"{o}er's Conjecture} In this subsection, we will prove Conjecture 1 for the case where
$C$ is of type $\widetilde{C}_{n-1}$ and the symmetrizer $D$ is minimal, that is, $D=\mathrm{diag}(2, 1, \dots, 2, 1)$.

For  a locally free $H$-module  $M$,   denote by $r_i$ the rank of the free $H_i$-module $M_i$, where
$i\in Q_0$.
We call $$\underline{\textup{rank}}(M):=(r_1,\cdots,r_n)$$ the {\em rank vector} of $M$.

Below we recall a few results from  \cite{[GLS1]},  which are important to prove the main result Theorem \ref{thm4.1} in this section.

\begin{lem}\cite[Proposition 11.5]{[GLS1]} \label{lem4.1}
Let $c=c_{\mathbf{i}}$ for some $+$-admissible sequence $\mathbf{i}=(i_1, \dots, i_n)$ and
$M$ be a $\tau$-locally free $H$-module. If $\tau^k(M)\neq 0$, then
\[\underline{\textup{rank}}(\tau^k(M))=c^k(\underline{\textup{rank}}(M)).\]
\end{lem}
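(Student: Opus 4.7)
The plan is to reduce the statement to $k = \pm 1$ by induction and then realize the Coxeter transformation $c_{\mathbf{i}}$ as the effect of an appropriate composition of reflection functors at the level of rank vectors. First, since $M$ is $\tau$-locally free, every $\tau^j(M)$ is locally free, so if the one-step formula $\underline{\textup{rank}}(\tau N) = c(\underline{\textup{rank}}(N))$ is established for every $\tau$-locally free $N$, then iterating gives
\[\underline{\textup{rank}}(\tau^k M) = c(\underline{\textup{rank}}(\tau^{k-1} M)) = c^k(\underline{\textup{rank}}(M)),\]
and the case $k < 0$ is handled dually by $c^{-1}$. So I would focus the rest of the argument on the one-step assertion $\underline{\textup{rank}}(\tau M) = c(\underline{\textup{rank}}(M))$.

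The second step is to introduce Geiss-Leclerc-Schr\"oer's reflection functors $F_i^{+}\colon H(C,D,\Omega)\textup{-mod} \to H(C,D,s_i(\Omega))\textup{-mod}$, defined whenever $i$ is a sink of the underlying quiver of $\Omega$, together with their dual counterparts $F_i^{-}$ at sources. The two properties that will be needed are (i) $F_i^+$ sends a locally free module $N$ with no summand isomorphic to the simple $S_i$ to a locally free module, and (ii) on the rank vector $F_i^+$ acts by the simple reflection $s_i$. These are analogues of the classical Bernstein-Gelfand-Ponomarev statements and should be extracted/quoted from the reflection-functor machinery in \cite{[GLS1]}.

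The third step is to express the AR-translation via a composition of reflection functors indexed by the $+$-admissible sequence $\mathbf{i}$: one wants $\tau \cong F_{i_n}^+ \circ \cdots \circ F_{i_1}^+$ on the subcategory of $\tau$-locally free modules, modulo projectives. For a $\tau$-locally free $M$, the hypothesis that $\tau^j(M)$ is locally free for all $j$ is precisely what guarantees that no simple summand $S_{i_k}$ appears at the intermediate stage, so that property (i) applies at each step. Applying property (ii) at each step then yields
\[\underline{\textup{rank}}(\tau M) = s_{i_n}\cdots s_{i_1}(\underline{\textup{rank}}(M)) = c_{\mathbf{i}}(\underline{\textup{rank}}(M)).\]

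The main obstacle is the third step: verifying that the iterated composition of reflection functors indeed computes $\tau$ on the $\tau$-locally free subcategory in the non-simply-laced, non-hereditary GLS setting, and that the $\tau$-locally free hypothesis is strong enough to ensure that no reflection step encounters a forbidden simple summand. Establishing the rank-vector compatibility in step two is mostly bookkeeping with the combinatorial data of $(C,D,\Omega)$, but matching the categorical $\tau$ with the composite reflection functor requires a careful comparison of minimal projective presentations before and after each reflection, which is where the full force of the $\tau$-local freeness of $M$ is used.
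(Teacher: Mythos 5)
This lemma is not proved in the paper at all: it is imported verbatim as \cite[Proposition 11.5]{[GLS1]}, so there is no internal proof to compare against. Your sketch follows the same route as the original argument in that reference --- reduce to a single application of $\tau$, let the reflection functors $F_i^{\pm}$ realize the simple reflections $s_i$ on rank vectors, and identify $\tau$ with the Coxeter functor $C^{+}=F_{i_n}^{+}\cdots F_{i_1}^{+}$ on the relevant modules --- so the strategy is the correct one. Two caveats. First, the summand that must be excluded before applying $F_i^{+}$ is the \emph{generalized} simple $E_i$ (the locally free module of rank vector $\alpha_i$, free of rank one over $H_i=e_iHe_i$), not the ordinary simple $S_i$; at the loop vertices $1$ and $n$ of type $\widetilde{C}_{n-1}$ these differ, and $S_1,S_n$ are not even locally free, so they could never occur as summands of a locally free module --- the condition you need concerns $E_{i_k}$. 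Second, the step you yourself flag as the main obstacle, namely that $C^{+}$ computes $\tau$ on $\tau$-locally free modules, is genuinely the technical core of Section 11 of \cite{[GLS1]} (a comparison of the minimal projective presentation of $M$ with its image under the composite reflection functors); as written, your argument assumes this identification rather than establishing it, so the proposal is an accurate outline of the known proof rather than a self-contained one.
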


\begin{lem}\cite[Lemmas 2.1, 3.2 and 3.3]{[GLS1]} \label{lem4.3}
Let $C$ be a symmetrizable Cartan matrix that is not of Dynkin type and let $\mathbf{i}=(i_1,i_2,\cdots,i_n)$ be an admissible sequence. Then
\[\underline{\textup{rank}}(\tau^{-r}(P_{i_k}))=c_\mathbf{i}^{-r}(\beta_{\mathbf{i},k})\] and
\[\underline{\textup{rank}}(\tau^{s}(I_{i_k}))=c_\mathbf{i}^s(\gamma_{\mathbf{i},k}),\] where
 $r,s\geq 0$ and $1\leq k\leq n$. Moreover these rank vectors  are pairwise distinct positive real roots. \end{lem}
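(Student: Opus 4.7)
The plan is to reduce the statement for all $r, s \geq 0$ to the cases $r = 0$ and $s = 0$ by a single induction that invokes Lemma \ref{lem4.1}, which identifies $\underline{\textup{rank}}(\tau^{-r}(M)) = c_{\mathbf{i}}^{-r}(\underline{\textup{rank}}(M))$ for any $\tau$-locally free $M$. Applied to $M = P_{i_k}$ and $M = I_{i_k}$, this reduces the two displayed identities to
\[
\underline{\textup{rank}}(P_{i_k}) = \beta_{\mathbf{i}, k}, \qquad \underline{\textup{rank}}(I_{i_k}) = \gamma_{\mathbf{i}, k},
\]
together with a separate verification that the resulting lattice vectors are pairwise distinct positive real roots.

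For the base cases I would induct on $k$ using BGP-style reflection functors at admissible vertices, which are available in the GLS framework. When $k=1$ the vertex $i_1$ is a sink in $Q^0(C,\Omega)$, so $P_{i_1}$ is a free $H_{i_1}$-module concentrated at $i_1$ and $\underline{\textup{rank}}(P_{i_1}) = \alpha_{i_1} = \beta_{\mathbf{i},1}$. For $k \geq 2$, the reflection functor at the sink $i_1$ provides an equivalence between suitable subcategories of $H(C,D,\Omega)\textup{-mod}$ and $H(C,D,s_{i_1}\Omega)\textup{-mod}$, transforming rank vectors by $s_{i_1}$ and carrying $P_{i_k}$ to the corresponding projective in the reflected algebra. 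Since $\mathbf{i}' = (i_2,\dots,i_n,i_1)$ is $+$-admissible for $(C,s_{i_1}\Omega)$, the inductive hypothesis gives the rank vector of the reflected projective as $\beta_{\mathbf{i}',k-1} = s_{i_2}\cdots s_{i_{k-1}}(\alpha_{i_k})$; applying $s_{i_1}$ recovers $\beta_{\mathbf{i},k} = s_{i_1}s_{i_2}\cdots s_{i_{k-1}}(\alpha_{i_k})$. The injective statement is dual, using that $(i_n,\dots,i_1)$ is $-$-admissible and reflection functors at sources.

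Each vector $c_{\mathbf{i}}^{-r}(\beta_{\mathbf{i},k})$ is the image of a simple root under a product of simple reflections, so it lies in $\Delta_{\textup{re}}$; it is positive because it equals the rank vector of a nonzero locally free module, hence lies in $\mathbb{N}^n \setminus \{0\}$. The same argument works for $c_{\mathbf{i}}^{s}(\gamma_{\mathbf{i},k})$. The subtlest part is pairwise distinctness. Since the preprojectives $\tau^{-r}(P_{i_k})$ (respectively the preinjectives $\tau^{s}(I_{i_k})$) are pairwise non-isomorphic indecomposable $\tau$-locally free modules, one route is to argue distinctness on the module side via AR-orbit analysis; alternatively, it is a purely Weyl-group statement that in non-Dynkin type $c_{\mathbf{i}}$ has infinite order on the real-root part of the root lattice, so iterated orbits of the $\beta_{\mathbf{i},k}$ contain no collision. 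To separate preprojective orbits from preinjective orbits one would pair rank vectors against $\delta$ or against the defect form: under iteration of $c_{\mathbf{i}}^{-1}$ the two families drift in opposite directions, so no real root is hit by both families. I expect this Weyl-group bookkeeping, in particular ruling out any collision across all powers of $c_{\mathbf{i}}$, to be the main technical obstacle; the identification of rank vectors itself is routine once the reflection-functor machinery is in place.
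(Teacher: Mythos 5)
This lemma is not proved in the paper at all: it is imported verbatim from Geiss--Leclerc--Schr\"oer (\cite[Lemmas 2.1, 3.2 and 3.3]{[GLS1]}, combined with the module-theoretic statements in Sections 3 and 11 there), so there is no in-paper argument to measure your proposal against. On its own terms, your reduction of the general case to $r=s=0$ via Lemma \ref{lem4.1}, and your reflection-functor induction giving $\underline{\textup{rank}}(P_{i_k})=\beta_{\mathbf{i},k}$ and dually $\underline{\textup{rank}}(I_{i_k})=\gamma_{\mathbf{i},k}$, are sound and consistent with how the source establishes the identification of rank vectors; positivity and reality of the resulting vectors are also handled correctly.

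The genuine gap is in the distinctness claim, which you correctly flag as the crux but do not close, and the specific routes you suggest do not work as stated. First, pairwise non-isomorphy of the modules $\tau^{-r}(P_{i_k})$ cannot yield pairwise distinct rank vectors: that implication is precisely what is being asserted, and it fails elsewhere in this very setting (all indecomposables at a given level of the homogeneous tubes share the rank vector $m\delta$). Second, the mere fact that $c_{\mathbf{i}}$ has infinite order does not exclude collisions $c_{\mathbf{i}}^{-r}(\beta_{\mathbf{i},k})=c_{\mathbf{i}}^{-r'}(\beta_{\mathbf{i},k'})$ with $k\neq k'$. Third, pairing against $\delta$ or a defect form presupposes that $C$ is affine, whereas the lemma is stated for every non-Dynkin symmetrizable $C$. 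The standard argument, and the one in \cite{[GLS1]}, is combinatorial: because $C$ is not of Dynkin type, the periodic infinite word $i_1,\dots,i_n,i_1,\dots$ is reduced in the Weyl group (equivalently $\ell(c_{\mathbf{i}}^{-r})=rn$ for all $r$), so the vectors $c_{\mathbf{i}}^{-r}(\beta_{\mathbf{i},k})=(s_{i_1}\cdots s_{i_n})^{r}s_{i_1}\cdots s_{i_{k-1}}(\alpha_{i_k})$ are exactly the inversions read off along that reduced word and are therefore pairwise distinct positive real roots; the dual argument handles the $\gamma$'s, and the two families are disjoint by the sign trichotomy under powers of $c_{\mathbf{i}}$ that underlies Proposition \ref{prop:roots}. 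Without this reduced-word input (or an equivalent substitute) the ``pairwise distinct'' part of the lemma remains unproved in your sketch.
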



Note that a representation of $Q=Q(C, \Omega)$ can be naturally viewed as a representation of a modulated graph
$\mathcal{M}(C, D)$ and vice versa. The representation categories of $Q$
and $\mathcal{M}(C, D)$ are equivalent. For a sink (resp. a source) in the modulated graph, one can define a reflection functor $F_i^{+}$ (resp. $F_i^{-}$) on the representations of the modulated graph, in a similar way as reflection functors defined for (simply-laced) quivers. When $i$
is admissible, we write the reflection functor by $F_i$ which should be interpreted as $F_i^+$ when
$i$ is a sink and $F_i^-$ otherwise.

\begin{lem}\cite[Proposition 9.4]{[GLS1]}\label{lem4.2}
Let $H=H(C,D,\Omega)$ and $H'=H(C,D,s_{i}\Omega)$, where $i$ is  admissible in $Q^0(C,\Omega)$. If
$M$ is an indecomposable locally free $H$-module and is not isomorphic to $S_{i}$, then
$F_i(M)$ is indecomposable and
\[\underline{\textup{rank}}(F_{i}(M))=s_{i}(\textup{\underline{rank}}(M)).\]
\end{lem}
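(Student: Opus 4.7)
The plan is to adapt the classical BGP reflection functor construction to the GLS/symmetrizable setting. By passing to the opposite algebra, it suffices to treat the case where $i$ is a sink in $Q^0(C,\Omega)$, so that $F_i = F_i^+$; the source case is entirely dual.

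First I would recall the explicit construction of $F_i$. For a locally free $H$-representation $M$, set $F_i(M)_j = M_j$ for $j\neq i$, keep all structure maps not incident at $i$, and define
\[
F_i(M)_i \;=\; \ker\!\Bigl(\, \bigoplus_{j\to i \text{ in } Q^0} {}_iH_j \otimes_{H_j} M_j \;\xrightarrow{\ \mu\ }\; M_i \,\Bigr),
\]
where $\mu$ is induced by the structural maps of $M$ through the bimodule ${}_iH_j = e_iHe_j$. The loop $\varepsilon_i$ acts by the diagonal left $H_i$-action on the middle term, and the arrows of $s_i(\Omega)$ starting at $i$ are the natural projections from $F_i(M)_i$ onto each summand. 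The key input from the GLS framework is that ${}_iH_j$ is a free left $H_i$-module and a free right $H_j$-module of $H_i$-rank $|c_{ij}|$ and $H_j$-rank $|c_{ji}|$.

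Next I would establish local freeness and read off the rank formula. The map $\mu$ is surjective if and only if $M$ has no summand isomorphic to $S_i$, and for indecomposable $M\not\cong S_i$ this holds. Since the middle term is $H_i$-free (freeness of each ${}_iH_j$ tensored with a free $H_j$-module) and $M_i$ is $H_i$-free by hypothesis, the kernel $F_i(M)_i$ is also $H_i$-free, so $F_i(M)$ is locally free over $H'$. The rank at $j\neq i$ is preserved, and additivity of $H_i$-rank in the short exact sequence gives
\[
\underline{\textup{rank}}(F_i(M))_i \;=\; \sum_{j\neq i} |c_{ij}|\, r_j - r_i \;=\; -r_i - \sum_{j\neq i} c_{ij}\,r_j \;=\; \bigl(s_i\,\underline{\textup{rank}}(M)\bigr)_i,
\]
which is exactly the required formula. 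Indecomposability then follows from the standard fact that the dual functor $F_i^-$ for $H'$ satisfies $F_i^-F_i\cong\textup{id}$ on the full subcategory of locally free modules with no $S_i$-summand (and symmetrically), so that $F_i$ restricts to an equivalence on this subcategory and therefore sends indecomposables not isomorphic to $S_i$ to indecomposables.

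The main obstacle is the verification that $\mu$ is surjective precisely when $M$ has no $S_i$-summand, together with the explicit computation of the $H_i$- and $H_j$-ranks of ${}_iH_j$. Unlike the classical BGP setting, the loops $\varepsilon_i,\varepsilon_j$ endow the bimodule ${}_iH_j$ with nontrivial actions, so its freeness and rank must be extracted from the explicit presentation of $H$ by generators and relations together with the symmetrizer data; this is the technical heart of the argument. Once this bookkeeping is in place, the remaining verifications are a routine adaptation of the BGP proof.
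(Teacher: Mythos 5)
First, a point of orientation: the paper does not prove this lemma at all --- it is quoted from Gei{\ss}--Leclerc--Schr\"oer \cite[Proposition 9.4]{[GLS1]} --- so there is no internal proof to compare yours against. Your outline does follow the strategy of that source: reflection functors built from the bimodules ${}_iH_j$, local freeness and the rank formula via additivity of $H_i$-rank, and indecomposability via $F_i^-F_i^+\cong\textup{id}$ on a suitable subcategory; the reduction to the sink case and the count $\sum_{j\neq i}|c_{ij}|r_j-r_i=(s_i\underline{\textup{rank}}(M))_i$ are both correct.

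However, the step you yourself flag as the technical heart is not merely unverified --- as you state it, it is false, and this is exactly where the symmetrizer changes the classical picture. Take $i=1$ a sink of $Q^0$ with arrow $\alpha\colon 2\to 1$, so $H_1=K[\varepsilon_1]/(\varepsilon_1^2)$. The string module $M=M(\alpha^{-1}\varepsilon_1)$ is indecomposable, locally free ($M_1\cong H_1$, $M_2\cong K$) and not isomorphic to $S_1$, yet the incoming map $\mu\colon {}_1H_2\otimes_{H_2}M_2\to M_1$ has image $\varepsilon_1 M_1$, hence is not surjective, and its kernel is one-dimensional and annihilated by $\varepsilon_1$, hence not $H_1$-free. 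So for this $M$ the naive kernel construction does not yield a locally free module and the rank formula fails. The classical argument ``if $\mu$ is not surjective then $M$ splits off copies of the simple at the sink'' breaks because $\operatorname{im}(\mu)$ need not admit an $\varepsilon_1$-stable complement in $M_1$: over a non-semisimple $H_i$, submodules of free modules are not automatically free direct summands. (Relatedly, the module that must be excluded is the generalized simple $E_i=M(\varepsilon_i)$, the free rank-one module at $i$, not the honest simple $S_i$: $E_1$ is indecomposable, locally free, not isomorphic to $S_1$, and $F_1^+(E_1)=0$, so your ``iff'' fails there too.) To repair the argument you must either restrict to modules for which $\mu$ is surjective --- and then verify this for the modules to which the lemma is actually applied in Section~4 --- or reproduce the more careful analysis of the source, where surjectivity is established under hypotheses genuinely stronger than ``indecomposable and $\not\cong S_i$''. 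Once that point is settled, the rest of your outline goes through.
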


\begin{prop}\cite[Proposition 9.6]{[GLS1]} \label{prop:TLFreeRig}
Let $M$ be a rigid $\tau$-locally free $H$-module and let $i$ be admissible in $Q^0$.
 Then $F_i(M)$ is also rigid and $\tau$-locally free.
\end{prop}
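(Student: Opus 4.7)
The plan is to verify the two conclusions separately: that $F_i(M)$ is $\tau$-locally free and that $F_i(M)$ is rigid. Both rest on compatibility of the reflection functor $F_i$ with the Auslander-Reiten translation and with $\operatorname{Ext}^1$, respectively.

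If $M\cong S_i$, then $F_i(M)=0$ is trivially rigid and $\tau$-locally free, so I may assume $M\not\cong S_i$. By Lemma \ref{lem4.2}, $F_i(M)$ is an indecomposable locally free $H'$-module for $H' = H(C,D,s_i\Omega)$, with rank vector $s_i\,\underline{\textup{rank}}(M)$. This takes care of $k=0$ in the definition of $\tau$-local freeness.

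For the remaining $k\in\mathbb{Z}$, choose a $+$-admissible sequence $\mathbf{i} = (i,i_2,\dots,i_n)$ for $(C,\Omega)$; then $\mathbf{i}' = (i_2,\dots,i_n,i)$ is $+$-admissible for $(C,s_i\Omega)$ and, by a direct computation in the Weyl group, $c_{\mathbf{i}'} = s_i c_{\mathbf{i}} s_i$. The crucial step is to establish the intertwining isomorphism $\tau_{H'}^k F_i(M)\cong F_i(\tau_H^k M)$ for every $k\in\mathbb{Z}$ such that $\tau_H^k M\not\cong S_i$. Combined with Lemma \ref{lem4.2} applied to $\tau_H^k M$ (which is indecomposable locally free by hypothesis), this shows $\tau_{H'}^k F_i(M)$ is locally free, compatibly with Lemma \ref{lem4.1}:
\[
\underline{\textup{rank}}(\tau_{H'}^k F_i(M)) = c_{\mathbf{i}'}^k s_i\,\underline{\textup{rank}}(M) = s_i c_{\mathbf{i}}^k\,\underline{\textup{rank}}(M) = s_i\,\underline{\textup{rank}}(\tau_H^k M).
\]
For rigidity, I would use that $F_i$ restricted to locally free modules without an $S_i$-summand is an equivalence onto its image and preserves first extension groups between such modules; consequently $\operatorname{Ext}^1_{H'}(F_i(M),F_i(M))\cong \operatorname{Ext}^1_H(M,M)=0$, so $F_i(M)$ is rigid.

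The main obstacle is the intertwining $F_i\tau_H\cong\tau_{H'}F_i$ in the non-simply-laced setting. The symmetrizer $D$ forces loops $\varepsilon_j$ at certain vertices whose squares lie in the ideal; although $i$ itself cannot be a loop vertex (admissibility forbids this), one must check that $F_i$ respects these nilpotency relations and, more delicately, that along a $\tau_H$-orbit of $M$ one never passes through $S_i$ in a way that would make $F_i(\tau_H^k M)=0$. The $\tau$-locally free hypothesis is designed precisely to prevent such degeneration, but the verification requires a careful case analysis depending on whether $i$ is a sink or a source in $Q^0$, and a concrete description of $F_i^{\pm}$ on representations of the modulated graph $\mathcal{M}(C,D)$ rather than on $H$-modules directly.
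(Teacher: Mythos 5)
This proposition is not proved in the paper at all: it is quoted verbatim from \cite[Proposition 9.6]{[GLS1]}, so there is no in-paper argument to compare yours against. Judged on its own terms, your proposal identifies the natural strategy (and, as far as one can tell, the strategy of Geiss--Leclerc--Schr\"oer), but it does not carry out either of the two steps on which everything hinges. The intertwining $\tau_{H'}^k F_i(M)\cong F_i(\tau_H^k M)$ is labelled ``the crucial step'' and then simply asserted; the rank-vector computation you give ($c_{\mathbf{i}'}^k s_i = s_i c_{\mathbf{i}}^k$) is only a consistency check at the level of rank vectors, not a proof that the modules themselves agree. Establishing this commutation in the GLS setting is genuinely nontrivial --- in \cite{[GLS1]} it goes through the identification of $\tau$ with a twisted Coxeter functor on locally free modules, which is a substantial theorem rather than a formal manipulation. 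Likewise the claim that $F_i$ preserves $\operatorname{Ext}^1$ between locally free modules without degenerate summands is exactly the content one would need to prove (via the adjunction between $F_i^-$ and $F_i^+$ and the structure of the unit and counit), and you state it without justification.

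Two smaller points. First, the object killed by $F_i$ is the generalized simple $E_i=H_i$ (which for $i\in\{1,n\}$ is the two-dimensional module $K[\varepsilon_i]/(\varepsilon_i^2)$), not the ordinary simple $S_i$; since an admissible vertex of $Q^0$ may well be $1$ or $n$, your reduction ``assume $M\not\cong S_i$'' is phrased for the wrong module, and indeed $S_1$, $S_n$ are not even locally free. Second, you flag but do not resolve the possibility that $\tau_H^k M\cong E_i$ for some $k\neq 0$, in which case $F_i(\tau_H^k M)=0$ while $\tau_{H'}^k F_i(M)$ need not vanish, so the intertwining as stated would fail; ruling this out for rigid modules (or working modulo the subcategory generated by $E_i$) is part of the real content. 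In short: the outline is sound and consistent with the cited source, but the two load-bearing steps are missing, so this is a plan rather than a proof.
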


\begin{prop}\cite[Proposition 1.9]{[DR]} \label{prop:roots} Let $\mathbf{i}=(i_1, \dots, i_n)$ be a
$+$-admissible sequence with respect to the orientation $\Omega$.
The set of  positive roots determined by the Cartan matrix $C$ is  the disjoint union of the following.
\begin{itemize}
\item[(1)] $\{c_\mathbf{i}^{-r}(\beta_{\mathbf{i},k})\mid r\in\mathbb{Z}_{\geq 0},1\leq k\leq n\}.$

\vspace{1mm}

\item[(2)] $\{c_\mathbf{i}^{s}(\gamma_{\mathbf{i},k})\mid s\in\mathbb{Z}_{\geq 0},1\leq k\leq n\}.$

\vspace{1mm}

\item[(3)] $\{x+ r\delta\mid x=0 \textup{ or a positive root that is }<\delta \textup{ and can be deduced
from a certain list of roots}; \\ r\in\mathbb{Z}_{\geq 0}  \textup{ and } r\not=0  \textup{ when } x=0  \}.$
\end{itemize}
\end{prop}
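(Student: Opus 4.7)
The plan is to verify this three-family decomposition of positive roots by combining the action of the Coxeter transformation $c_\mathbf{i}$ on the root lattice $\mathbb{Z}^n$ with the reflection reduction provided by Lemma~\ref{lem4.0}. The decisive fact about affine Cartan matrices is that $\delta$ spans the radical of the symmetrized form of $C$, so $c_\mathbf{i}(\delta)=\delta$, and hence $c_\mathbf{i}$ has finite order when acting on the quotient $\mathbb{Z}^n/\mathbb{Z}\delta$. Consequently, every $c_\mathbf{i}$-orbit of a real root falls into exactly one of three mutually exclusive behaviours: its forward iterates eventually become non-positive, its backward iterates eventually become non-positive, or the orbit is periodic modulo $\delta$. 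These correspond respectively to families (1), (2) and the real part of (3).

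First I would verify pairwise disjointness. The orbits in (1) and (2) escape to negativity in opposite directions and so cannot share a root; the real roots in (3) are periodic modulo $\delta$ and therefore escape in neither direction, which separates (3) from (1) and (2); and the purely imaginary roots $\mathbb{Z}_{>0}\delta\subset(3)$ are trivially disjoint from the real roots in (1) and (2).

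Second, to show that every positive root lies in the union of the three families, I would induct on $n$ via the cyclic rotation $\mathbf{i}\mapsto\mathbf{i}'=(i_2,\ldots,i_n,i_1)$. Since $s_{i_1}$ permutes positive real roots other than $\alpha_{i_1}$ and satisfies the intertwining relation $s_{i_1}c_\mathbf{i}^{\pm 1}=c_{\mathbf{i}'}^{\pm 1}s_{i_1}$, Lemma~\ref{lem4.0} guarantees that a three-family classification for $\mathbf{i}'$ pulls back, outside of $\alpha_{i_1}$, to one for $\mathbf{i}$. As $\alpha_{i_1}=\beta_{\mathbf{i},1}$ already lies in (1), iterating the rotation cyclically accounts for every positive real root, and the positive imaginary roots $r\delta$ for $r\geq 1$ fit into (3) by definition.

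The main obstacle is describing family (3) explicitly, that is, identifying the finite list of positive real roots $x<\delta$ whose $\delta$-translates $x+r\delta$ comprise the real part of (3). Representation-theoretically these should correspond to the rank vectors of quasi-simple regular modules in the tubes of the affine hereditary structure attached to $(C,D,\Omega)$. For type $\widetilde{C}_{n-1}$ with the minimal symmetrizer — the case of interest in this paper — an explicit computation of the periodic action of $c_\mathbf{i}$ modulo $\delta$ on $\mathbb{Z}^n$ would yield the list: the rank vectors of the $n-1$ minimal string modules at the bottom of the rank-$(n-1)$ tube, together with the rank vectors of the band modules generating the homogeneous tubes; $\delta$-translation by $c_\mathbf{i}$-iteration then produces all remaining roots in (3).
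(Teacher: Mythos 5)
First, note that the paper itself does not prove this proposition: it is quoted, with attribution, from Dlab--Ringel \cite[Proposition 1.9]{[DR]}, so there is no internal proof to compare against. Your sketch follows the standard defect/Coxeter-orbit strategy that underlies Dlab--Ringel's argument, and the disjointness part is essentially right: $c_\mathbf{i}$ fixes $\delta$, a suitable power $c_\mathbf{i}^{h}$ acts as $x\mapsto x+\partial(x)\delta$ for a linear form $\partial$ (the defect), and the three families consist exactly of the positive roots with $\partial<0$, $\partial>0$ and $\partial=0$ respectively, which therefore cannot overlap.

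The completeness step, however, has a genuine gap. Your ``induction on $n$ via the cyclic rotation $\mathbf{i}\mapsto\mathbf{i}'$'' is not a well-founded induction: the rotated sequence has the same length $n$, and after $n$ rotations one returns to $(\mathbf{i},\Omega)$, so there is no decreasing quantity and no base case; Lemma~\ref{lem4.0} only transports a classification between $\mathbf{i}$ and $\mathbf{i}'$, it cannot create one. The argument actually needed is: (i) a positive root $x$ satisfies $c_\mathbf{i}(x)\not>0$ if and only if $x\in\{\beta_{\mathbf{i},1},\dots,\beta_{\mathbf{i},n}\}$, and dually for $c_\mathbf{i}^{-1}$ and the $\gamma_{\mathbf{i},k}$; (ii) if $\partial(x)<0$ then $c_\mathbf{i}^{m}(x)$ is eventually non-positive, so iterating (i) places $x$ in family (1), and symmetrically for $\partial(x)>0$ and family (2); (iii) if $\partial(x)=0$ the $c_\mathbf{i}$-orbit is finite and one must separately prove that every such positive root has the form $x_0+r\delta$ with $x_0$ drawn from an explicit finite list of roots below $\delta$. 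You flag (iii) as ``the main obstacle'' and defer it to an unspecified computation together with a representation-theoretic heuristic about tubes; but that is precisely the content of part (3) of the statement (and the part that Remark~\ref{remDR} notes Dlab--Ringel themselves leave partly implicit), so as written the proposal asserts rather than establishes the decomposition.
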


\begin{rem}\label{remDR}
(1) By Proposition \ref{prop3.3}, we know the indecomposable modules at the bottom of the tube of rank $n-1$. Their rank vectors are pairwise distinct and are exactly those in the  list of roots in Proposition \ref{prop:roots} (3) when the orientation $\Omega$ is linear, i.e.
$ \Omega=\{(2, 1), (3, 2), \dots, (n, n-1)\}.$
These rank vectors are $(*)$: the simple roots $\alpha_i$ for $1<i<n$ and
$\sum_{i=1}^n\alpha_i.$
In this case the roots $x$ in Proposition \ref{prop:roots} (3) are sums of the form
\[ \sum_{i\leq t\leq i+j} c_{\mathbf{i}}^t \alpha_2  \]
for some $i, ~j$ with $0\leq i <n-1$ and $0\leq j<n-2$
(see the discussion between Lemma 1.8 and Proposition 1.9 in \cite{[DR]}), where $\mathbf{i}=(n, n-1, \dots, 2, 1)$.
In fact in the sum, $\alpha_2$ can be replaced by any root in the list $(*)$.

\vspace{1mm}

(2) Our main result  of this section below,
Theorem \ref{thm4.1}, largely follows from Theorem \ref{thm3.2} and Proposition \ref{prop:roots}
when $\Omega$ is linear. However, when it is not linear,
Dlab-Ringel do not  explain further how to deduce $x$ from list $(*)$ of roots in the paper \cite{[DR]}.
In our proof to Theorem  \ref{thm4.1}, we will deal  with the quiver $Q$ with nonlinear  orientation
separately, using reflection functors.
\end{rem}

By Lemma \ref{lem:band},
 any band $w$ is equivalent to a band of the form
 \[w_0^{-1}\varepsilon_n^{\pm} w_0 \varepsilon_1^{\pm}...w_0\varepsilon_1^{\pm} \;\;\;\;~~~~~~~~~(**).\]
 We define the {\em delta-length}
 of $w$ by the number $m$ of $w_0$ appearing in the band $(**)$, denoted by
 $\dl(w)=m$. For instance, \[\dl(w_0^{-1}\varepsilon_n w_0 \varepsilon_1)=1\] and
 \[\dl(w_0^{-1}\varepsilon_nw_0\varepsilon_1^{-1}w_0^{-1} \varepsilon_n w_0\varepsilon_1)=2.\]
  If $\dl(w)=r$, $S=(K^s, \varphi)$ is a simple representation of $K[T, T^{-1}]$,
 then the band module $M(w, s, \varphi)$ has rank vector $sr\delta$.

Note that when $C$ is of type $\tilde{C}_{n-1}$ and $D$ is minimal,
 the quiver  $Q=Q(C,  \Omega)$ constructed in \cite{[GLS1]} is exactly the quiver we have in Section 3.2,
$$ \xymatrix{
\varepsilon_1 \circlearrowleft  1 \ar@{-}[r] & 2\ar@{-}[r]&  \cdots \ar@{-}[r]  &n\circlearrowleft \varepsilon_n, \\
}$$
and the algebra $H=H(C,D,\Omega)=KQ/I$, where $I$ is generated by $\varepsilon_i^2$ for $i=1, n$.
We restate  Conjecture 1 for this case as follows.

\begin{conj} \label{conjspecial}
{\em Let $C$ be a Cartan matrix of type $\tilde{C}_{n-1}$ and let  $D$ be a  minimal symmetrizer of $C$. Then
There is a bijection between positive roots of type $C$  and rank vectors of $\tau$-locally free $H$-modules.}
\end{conj}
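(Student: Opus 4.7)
The strategy is to establish equality between the set of positive roots of type $C$ and the set of rank vectors of indecomposable $\tau$-locally free $H$-modules, both viewed as subsets of $\mathbb{N}^n$. By Theorem \ref{thm3.2}, every indecomposable $\tau$-locally free $H$-module is preprojective, preinjective, or regular, so I would match these three families against the three classes of positive roots in Dlab--Ringel's Proposition \ref{prop:roots}.

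For the preprojective and preinjective families, Lemma \ref{lem4.3} computes the rank vectors as $c_{\mathbf{i}}^{-r}(\beta_{\mathbf{i},k})$ and $c_{\mathbf{i}}^{s}(\gamma_{\mathbf{i},k})$, pairwise distinct positive real roots which exhaust classes (1) and (2) of Proposition \ref{prop:roots}. For the band modules $M(w,s,\varphi)$ in the homogeneous tubes, the standard form from Lemma \ref{lem:band} gives $\underline{\textup{rank}}(M(w,s,\varphi))=s\,\dl(w)\,\delta$; as $s$ and $\dl(w)$ range over positive integers, these sweep out all positive multiples of $\delta$, that is, all positive imaginary roots.

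The main step is to match the regular modules in the tube of rank $n-1$ with the remaining roots $x+r\delta$ in class (3) of Proposition \ref{prop:roots}. For the linear orientation $\Omega=\{(2,1),(3,2),\ldots,(n,n-1)\}$, Remark \ref{remDR} identifies the rank vectors at the bottom of the tube with the Dlab--Ringel list $(*)$. The modules higher up in the tube have filtrations by these bottom modules, so their rank vectors are sums of consecutive bottom rank vectors along the rays and corays of the tube, producing exactly the expressions $x+r\delta$ demanded by Proposition \ref{prop:roots}(3); positive multiples of $\delta$ are moreover recovered again by full sums around a $\tau$-period, which explains why the correspondence is only a bijection at the level of rank vectors.

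The principal obstacle will be the non-linear orientation case, where \cite{[DR]} does not make explicit how class (3) is generated. For this I would invoke reflection functors: Proposition \ref{prop:TLFreeRig} shows that $F_i$ preserves rigidity and $\tau$-local freeness, while Lemma \ref{lem4.2} shows it acts on rank vectors by the simple reflection $s_i$. After verifying that the modules at the bottom of the tube of rank $n-1$ are rigid (which follows from their explicit string description in Proposition \ref{prop3.3} by direct inspection of string-module self-extensions), a sequence of reflections at admissible vertices transforms any orientation to the linear one, with compatibility on the preprojective/preinjective side supplied by Lemma \ref{lem4.0}. This reduces the general case to the linear case and completes the proof of the bijection.
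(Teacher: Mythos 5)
Your proposal is correct and follows essentially the same route as the paper: match preprojectives/preinjectives to Dlab--Ringel's classes (1) and (2) via Lemma \ref{lem4.3}, match band modules and the tube of rank $n-1$ to the imaginary roots and the class-(3) roots using Remark \ref{remDR} in the linear-orientation case, and reduce a general orientation to the linear one by reflection functors using Proposition \ref{prop:TLFreeRig}, Lemma \ref{lem4.2} and Lemma \ref{lem4.0}. The only cosmetic difference is that the paper justifies rigidity of the bottom modules of the rank-$(n-1)$ tube via the homological interpretation of the Ringel form rather than by direct inspection of self-extensions, and it spells out that $F_i(M)$ lands at the bottom of the new tube by checking that the sum of rank vectors over a Coxeter orbit remains $\delta$.
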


\begin{thm}\label{thm4.1} Let $H=H(C,D,\Omega)$ be a string algebra of type $\widetilde{C}_{n-1}$.
Then $\alpha$ is a positive root if and only if there is a
$\tau$-locally free module $M$ such that $\rank M=\alpha$. Moreover,
\begin{itemize}
\item[(1)] if $\alpha$ is a positive real root, then there is a unique $\tau$-locally free $H$-module $M$ (up to isomorphism) such that $\underline{\textup{rank}} M =\alpha$.

\item[(2)] if $\alpha=m\delta$ is a positive imaginary root, then all the following modules have rank vector $\alpha$.
\begin{itemize}
\item[(a)] The  modules in level $m(n-1)$ in the tube of rank $n-1$.
\item[(b)] {The  modules in  level $r$ of the homogeneous tubes $\mathcal{H}_{w, S}$, where
$w\in \overline{\mathrm{Ba}}(H)$ with $\dl(w)=t$, $S=(K^s, \varphi)\in \mathcal{S}$ is a simple
$K[T, T^{-1}]$-module such that $r=\frac{m}{st}$. In particular, $r=m$ when $\dl(w)=1$ and
$s=1$. }
\end{itemize}
\item[(3)] the modules at the bottom of the tube of rank $n-1$ are rigid.
\end{itemize}
\end{thm}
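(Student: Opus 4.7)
The plan is to combine Theorem~\ref{thm3.2}, which classifies the $\tau$-locally free $H$-modules as preprojective, preinjective, or regular modules in tubes, with Proposition~\ref{prop:roots}, which partitions the positive roots of type $\widetilde{C}_{n-1}$ into a preprojective family, a preinjective family, and a regular family of roots of the form $x+r\delta$. Lemma~\ref{lem4.3} serves as the bridge in the preprojective and preinjective cases, matching $\tau^{-r}P_{i_k}\leftrightarrow c_{\mathbf{i}}^{-r}\beta_{\mathbf{i},k}$ and $\tau^{s}I_{i_k}\leftrightarrow c_{\mathbf{i}}^{s}\gamma_{\mathbf{i},k}$ bijectively and with pairwise distinct rank vectors. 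It then remains to match the regular $\tau$-locally free modules with the roots $x+r\delta$.

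For the ``only if'' direction in the regular case, I will compute rank vectors of tube modules directly. In the tube of rank $n-1$, the $n-1$ quasi-simple modules at the bottom are described explicitly in Proposition~\ref{prop3.3}, and Theorem~\ref{thm3.1}(2) (in its rank-vector form) shows that their rank vectors sum to $\delta$. A module at level $\ell=q(n-1)+s$ has a filtration by $\ell$ consecutive bottom modules cyclically arranged around the $\tau$-orbit, so its rank vector is $q\delta$ when $s=0$ and $q\delta+x$ with $0<x<\delta$ otherwise. In a homogeneous tube $\mathcal{H}_{w,S}$, I will take $w$ in the standard form of Lemma~\ref{lem:band}: a count of the occurrences of each vertex in the closed walk shows that the bottom band module with parameter $S=(K^s,\varphi)$ has rank vector $st\delta$ for $t=\dl(w)$, and the level-$r$ band module therefore has rank vector $rst\delta$. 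Both computations yield only positive roots, and together with Lemma~\ref{lem4.3} this establishes the ``only if'' direction and also part~(2).

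For the ``if'' direction in the regular case, I will first handle the linear orientation $\Omega=\{(2,1),(3,2),\dots,(n,n-1)\}$. By Remark~\ref{remDR}(1), the rank vectors of the bottom modules of the tube of rank $n-1$ are exactly the list $(\ast)$ from which the roots $x<\delta$ in Proposition~\ref{prop:roots}(3) are generated; climbing $q$ further levels in the tube realises $x+q\delta$, and level $m(n-1)$ realises $m\delta$. For a general $\Omega$ I will bootstrap from the linear case via reflection functors: Lemma~\ref{lem4.2} and Proposition~\ref{prop:TLFreeRig} transport $\tau$-locally free modules across the change $\Omega\mapsto s_{i}\Omega$ at an admissible vertex $i$, with rank vectors transformed by $s_{i}$, and Lemma~\ref{lem4.0} matches this to the rotation of the associated admissible sequence; iterating transports the bijection to arbitrary $\Omega$.

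Uniqueness in part~(1) follows from the pairwise distinctness in Lemma~\ref{lem4.3} for preprojective and preinjective rank vectors, together with the rank-vector computation above in the tube of rank $n-1$ for the remaining real roots: modules at different levels have different reductions modulo $\delta$, and the $n-1$ modules at any fixed level $q(n-1)+s$ with $0<s<n-1$ have distinct rank vectors. For part~(3), rigidity of a bottom module $M$ of the tube of rank $n-1$ reduces via the Auslander--Reiten formula to $\overline{\mathrm{Hom}}(M,\tau M)=0$, which can be checked directly from the explicit description of $M$ and $\tau M$ as distinct indecomposable string modules in Proposition~\ref{prop3.3}. I expect the main difficulty to lie in the non-linear orientation case of the ``if'' direction, since Dlab--Ringel do not spell out the roots $x$ uniformly for arbitrary orientations (Remark~\ref{remDR}(2)); the reflection-functor argument must carefully track how $F_{i}$ permutes the tube of rank $n-1$ and how its iterates correspond to the rotated $+$-admissible sequences from Lemma~\ref{lem4.0}, keeping the tube combinatorics compatible with the Coxeter action on the root lattice.
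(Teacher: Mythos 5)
Your proposal is correct and follows essentially the same route as the paper: the linear-orientation case is settled by matching Dlab--Ringel's partition of $\Delta^+$ (Proposition~\ref{prop:roots}) against Lemma~\ref{lem4.3} for the preprojective/preinjective families and against explicit rank-vector computations in the tubes for the regular family, and the general orientation is then reached by induction on admissible reflections using Lemma~\ref{lem4.0}, Lemma~\ref{lem4.2} and Proposition~\ref{prop:TLFreeRig}, exactly as in the paper's Case~II. The only real deviation is in part~(3), where you verify rigidity of the bottom modules of the rank-$(n-1)$ tube directly via $\overline{\operatorname{Hom}}(M,\tau M)=0$ for string modules, whereas the paper invokes the homological interpretation of the Ringel form in the linear case and transports rigidity with the reflection functors; both are sound.
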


\begin{proof}
 Case I:  the orientation $\Omega$ is linear. We first explain that
(3) is true. By Proposition \ref{prop3.3}, the modules at the bottom of the tube of rank $n-1$ are the simples  $S_i$ ($1<i<n$) and $M((\alpha_{21})_{-})$. The simples are rigid since there is no loops at vertices $2, \dots, n-1$, and
$M((\alpha_{21})_{-})$ is rigid, by the homological interpretation of the Ringel Form  defined for $Q$ in \cite[Proposition 4.1]{[GLS1]}.

Next by Lemma \ref{lem4.3} and Remark \ref{remDR}, the roots
in Proposition \ref{prop:roots} (1) are the rank vectors of indecomposable preprojective
modules; the roots
in Proposition \ref{prop:roots} (2) are the rank vectors of indecomposable preinjective
modules; the roots
in Proposition \ref{prop:roots} (3) are the rank vectors of indecomposable modules in
tubes. Therefore the theorem follows from Theorem \ref{thm3.2} and the descriptions
of tubes in Theorem \ref{thm3.1}.

Observation $(\dagger)$:  for a $\tau$-locally free module $M$,  $\rank M$ is
an imaginary root if and only if $M$ is  in a homogeneous tube or in
levels $r(n-1)$ ($r\in \mathbb{N}$) in the tube of rank $n-1$.

Case II: the general case. First
note that any quiver $L'$ of type $A_n$  can be obtained by applying a sequence of admissible reflections $s_{i_1}, \dots, s_{i_m}$ on the linear quiver $L$ of type $A_n$, where $i_{1}$ is admissible in $L$,  $i_{t}$ is admissible in
 $s_{i_{t-1}}\dots s_{i_1} (L)$ for $t>1$ and $L'=s_{i_m}\dots s_{i_1} (L)$.
Assume that the theorem holds for an orientation $\Omega$.
Let $i$ be an admissible vertex in $Q^0(C, \Omega)$. By induction,
we only need to show that
 the theorem holds for the orientation $s_i(\Omega)$.

Let $M$ be an $H$-module  at the bottom of  the tube of rank $n-1$.
Then $M$ is  rigid and $\tau$-locally
free, and by Proposition \ref{prop3.3}, $M$ is not a simple module associated to an admissible vertex. So $F_{i}(M)\not= 0$ is indecomposable by Lemma \ref{lem4.2}. By Proposition \ref{prop:TLFreeRig}, $F_{i}(M)$ is  a
rigid $\tau$-locally free  $H'$-module,
where $H'=H(C, D, s_i(\Omega))$.
Without loss of generality, we assume that $i$ is a sink.  We choose
 a $+$-admissible sequence $\mathbf{i}=(i_1, \dots, i_n)$ with $i_1=i$.
 By Lemma \ref{lem4.2}, $\rank F_{i}^{+}(M)=s_{i}(\rank M)$, which is  not a
  root as those listed in Lemma \ref{lem4.0}.
Note also{ \[\Sigma_{j=0}^{n-2}c_{\mathbf{i}}^j(\underline{\text{rank}}(F_{i}^{+}(M)))=\delta,\]
which is obtained by  applying $s_{i}$ to  $\Sigma_{j=0}^{n-2}c_{\mathbf{i}}^j((\underline{\text{rank}}(M))=\delta$.}
Therefore, $F_{i}^{+}(M)$ is an $H'$-module at the bottom of the tube of rank $n-1$.
Consequently, (3) holds and
 the reflection $s_{i}$ induces a bijection between the rank vectors
of the  $\tau$-locally free $H$-module  in the tube of rank $n-1$ and the rank vectors
of the $\tau$-locally free $H'$-module  in the  tube of rank $n-1$.
Therefore, together with Lemma \ref{lem4.0},
\[\begin{array}{ll}
& \{\rank M\mid M \textup{ is a } \tau \textup{-locally free } H'\textup{-module}\} \\=&\vspace{1mm}
\{s_i(\rank M)\mid M \textup{ is a } \tau \textup{-locally free }
H\textup{-module such that }  \rank M\not =\alpha_i\} \cup \{\alpha_i\}.
\end{array}\]
 The latter is exactly the set of positive roots by the induction hypothesis and the fact that
 $s_i$ permutes $\Delta^{+}\setminus \{\alpha_i\}$.
Therefore $\alpha$ is a positive root if and only if $\alpha=\rank M$ for some
$\tau$-locally free $H'$-module $M$.
The remaining parts of the theorem, (1) and (2),  follow from Theorems \ref{thm3.1}, \ref{thm3.2} and
the observation $(\dagger)$. Therefore,
the theorem holds for $s_i(\Omega)$. This completes the proof.
\end{proof}

\begin{cor}\label{cor4.4}
Let $C$ be a Cartan matrix of type $\widetilde{C}_{n-1}$ and  let $D$ be the minimal symmerizer $\textup{diag}(2,1,1,\cdots,1,1,2)$. Then Conjecture \ref{conjspecial} is true.
\end{cor}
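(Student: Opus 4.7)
The plan is to observe that Corollary \ref{cor4.4} is essentially a reformulation of the first assertion of Theorem \ref{thm4.1} and therefore requires almost no work beyond unpacking definitions. Conjecture \ref{conjspecial} asserts the existence of a bijection between the set $\Delta^+$ of positive roots of type $C$ and the set $\{\underline{\textup{rank}}(M) \mid M \text{ is a } \tau\textup{-locally free } H\textup{-module}\}$ of rank vectors of $\tau$-locally free $H$-modules. Since both sets sit inside the common ambient lattice $\mathbb{Z}^n$, to produce a bijection it suffices to show these two subsets coincide, after which the identity map on the common set provides the required bijection.

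First I would invoke Theorem \ref{thm4.1}, whose opening clause states exactly that $\alpha \in \mathbb{Z}^n$ is a positive root of type $C$ if and only if there exists a $\tau$-locally free $H$-module $M$ with $\underline{\textup{rank}}(M)=\alpha$. This immediately yields the set-theoretic equality
\[
\Delta^+ \;=\; \{\underline{\textup{rank}}(M) \mid M \text{ is } \tau\textup{-locally free}\},
\]
which is precisely the content of Conjecture \ref{conjspecial} under the minimal symmetrizer $D=\textup{diag}(2,1,\dots,1,2)$.

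The only thing left to check is that the hypothesis of Corollary \ref{cor4.4} matches the hypothesis of Theorem \ref{thm4.1}: namely that the quiver $Q=Q(C,\Omega)$ constructed by Geiss-Leclerc-Schröer in \cite{[GLS1]} for a type $\widetilde{C}_{n-1}$ Cartan matrix with minimal symmetrizer is indeed a string algebra of type $\widetilde{C}_{n-1}$ in the sense of Section 3.2. This has already been noted in the paragraph preceding Conjecture \ref{conjspecial}, where it is pointed out that $Q$ has the required shape with loops $\varepsilon_1,\varepsilon_n$ and ideal generated by $\varepsilon_1^2,\varepsilon_n^2$. Thus Theorem \ref{thm4.1} applies and the corollary follows.

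There is no real obstacle here: the genuine work was already carried out in Theorem \ref{thm4.1}, where the classification of $\tau$-locally free modules from Theorem \ref{thm3.2} was combined with Dlab-Ringel's description of positive roots (Proposition \ref{prop:roots}) in the linear orientation case, and then propagated to arbitrary orientations via reflection functors using Lemmas \ref{lem4.0}, \ref{lem4.2} and Proposition \ref{prop:TLFreeRig}. Consequently the proof of Corollary \ref{cor4.4} consists of a single sentence citing Theorem \ref{thm4.1}.
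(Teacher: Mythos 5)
Your proposal is correct and matches the paper's treatment: the corollary is stated without a separate proof precisely because it is an immediate consequence of the first assertion of Theorem \ref{thm4.1}, which identifies the set of positive roots with the set of rank vectors of $\tau$-locally free modules, so the identity map furnishes the required bijection. Your additional check that the hypotheses of Conjecture \ref{conjspecial} match those of Theorem \ref{thm4.1} is a reasonable bit of diligence but adds nothing beyond what the paper already records in the paragraph preceding the conjecture.
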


Following Theorem \ref{thm4.1}, we can now enhance Proposition \ref{prop:roots} as follows.

\begin{cor} (cf. \cite[Proposition 1.9]{[DR]})\label{prop4.1}
Let $C$ be the Cartan matrix of type $\widetilde{C}_{n-1}$, $D$ the minimal symmetrizer and let $\mathbf{i}=(i_1,i_2,\cdots,i_n)$ be a $+$-admissible sequence for $(C,\Omega)$. Then
$$\Delta^+(C)=\{c_\mathbf{i}^{-r}(\beta_{\mathbf{i},k})\mid r\in\mathbb{Z}_{\geq 0},1\leq k\leq n\}\cup\{c_\mathbf{i}^{s}(\gamma_{\mathbf{i},k})\mid s\in\mathbb{Z}_{\geq 0},1\leq k\leq n\}\cup$$
{
\[\{(\sum_{p\leq j\leq p+q} c_\mathbf{i}^j(\alpha))+m\delta \mid 0\leq p< n-1, ~0 \leq q< n-2\textup{ and } m \in \mathbb{Z}_{\geq 0} \}\cup \mathbb{Z}_{> 0}\delta,\]}
where $\alpha=\alpha_1+\alpha_2$ (or any other $\alpha_i+\alpha_{i+1}$) if $Q^0$ is alternating, i.e. each vertex is admissible, and otherwise   $\alpha$ can be  any simple root $\alpha_l$ that is associated to
a non-admissible vertex $l$.
\end{cor}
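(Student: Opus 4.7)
The plan is to interpret each piece of the union as the rank vectors of a specific class of $\tau$-locally free $H$-modules, using Theorem \ref{thm4.1} to match these with $\Delta^+(C)$, and to identify the contributions coming from the different components of the AR-quiver described in Theorem \ref{thm3.1}.

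By Theorem \ref{thm4.1}, $\Delta^+(C)$ coincides with the set of rank vectors of $\tau$-locally free $H$-modules, which by Theorem \ref{thm3.2} split into preprojectives, preinjectives and indecomposable regulars. Lemma \ref{lem4.3} immediately identifies the first two subsets of the union with the preprojective and preinjective rank vectors, respectively. It therefore remains to match the third subset with the regular rank vectors. By Theorem \ref{thm3.1}, the regulars lie either in a homogeneous tube $\mathcal{H}_{w,S}$ or in the unique tube of rank $n-1$. For homogeneous tubes, Lemma \ref{lem:band} and Theorem \ref{thm4.1}(2) show that the rank vectors of band modules exhaust $\mathbb{Z}_{>0}\delta$, accounting for the last summand.

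For the tube of rank $n-1$, I would first treat the linear orientation $\Omega_0=\{(2,1),\ldots,(n,n-1)\}$ with $\mathbf{i}_0=(n,n-1,\ldots,1)$. Here Proposition \ref{prop3.3} and Remark \ref{remDR}(1) show that the bottom of the tube consists of modules with rank vectors $\{c_{\mathbf{i}_0}^j(\alpha_2):0\le j\le n-2\}$ (taking $\alpha_2$ as the chosen non-admissible simple root). Walking up the tube by $q$ steps from position $p$ corresponds to filtrations whose rank vectors sum to $\sum_{p\le j\le p+q}c_{\mathbf{i}_0}^j(\alpha_2)$, and translating upward by one full period of $n-1$ levels adds $\delta$ since the sum of rank vectors across the bottom of the tube equals $\delta$ by Theorem \ref{thm3.1}(2) (combined with Theorem \ref{thm4.1}(2a)). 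Thus the regular rank vectors from this tube are exactly $\{(\sum_{p\le j\le p+q}c_{\mathbf{i}_0}^j(\alpha_2))+m\delta:0\le p<n-1,\,0\le q<n-2,\,m\ge 0\}$, matching the third summand in the linear case.

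For a general orientation, I would proceed by induction on the length of a sequence of admissible reflections connecting $\Omega_0$ to $\Omega$. If the statement holds for $(C,D,\Omega)$ and $i$ is admissible in $Q^0(C,\Omega)$, then by Lemma \ref{lem4.2} and Proposition \ref{prop:TLFreeRig} the functor $F_i$ sends rank vectors to their $s_i$-images; combined with the compatibility $s_i c_{\mathbf{i}'}=c_{\mathbf{i}} s_i$ from Lemma \ref{lem4.0} (where $\mathbf{i}'$ is the appropriate rotation of $\mathbf{i}$), this transports the third summand for $\Omega$ to the third summand for $s_i(\Omega)$, provided one updates the chosen root $\alpha$: if $Q^0$ becomes alternating, the natural choice $s_i(\alpha_l)$ becomes a sum $\alpha_j+\alpha_{j+1}$ at a pair of adjacent vertices, while otherwise one can always pick a simple root $\alpha_{l'}$ at a non-admissible vertex of the new quiver. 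The main obstacle is checking this bookkeeping of $\alpha$ under reflections; in particular one must verify that $\sum_{p\le j\le p+q}c_{\mathbf{i}}^j(\alpha)$ is independent of the specific admissible choice of $\alpha$ in the alternating case (so that $\alpha_1+\alpha_2$ and any other $\alpha_i+\alpha_{i+1}$ generate the same set), and that $c_{\mathbf{i}}^{n-1}$ acts on the relevant rank vectors by translation by $\delta$, so that the ranges $0\le p<n-1$ and $0\le q<n-2$ indeed give each regular rank vector exactly once. Once this calibration is confirmed, the identification of the third set with the regular rank vectors in the tube of rank $n-1$ follows, completing the proof.
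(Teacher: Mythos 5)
Your proposal is correct and fleshes out exactly the argument the paper intends: the corollary is stated as a direct consequence of Theorem \ref{thm4.1}, with the third set accounted for by the tube of rank $n-1$ via Proposition \ref{prop3.3} and Remark \ref{remDR}(1) in the linear case, the multiples of $\delta$ by the homogeneous tubes and the levels $r(n-1)$, and the general orientation handled by the same reflection-functor induction used in Case II of the proof of Theorem \ref{thm4.1}. The bookkeeping issues you flag (independence of the choice of $\alpha$, periodicity of $c_{\mathbf{i}}$ by $\delta$) are real but routine, following from the fact that the bottom of the tube is a single $\tau$-orbit of length $n-1$ whose rank vectors sum to $\delta$ and on which $\tau$ acts as $c_{\mathbf{i}}$ by Lemma \ref{lem4.1}.
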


\vspace{2mm}\noindent {\bf Acknowledgements}  The authors would like to thank  Bernt Tore Jensen  for helpful discussions and for pointing out the reference \cite{[BR]}. They also would like to thank  Xiao-Wu Chen and  Zhiming Li for helpful comments.

\end{document}